\NewDocumentCommand{\ceil}{s O{} m}{%
  \IfBooleanTF{#1} 
    {\left\lceil#3\right\rceil} 
    {#2\lceil#3#2\rceil} 
}
\newtheorem{theorem}{Theorem}[section]
\newtheorem{lemma}[theorem]{Lemma}
\newtheorem{prop}[theorem]{Proposition}
\theoremstyle{remark}
\newtheorem{remark}[theorem]{\bf{Remark}}
\theoremstyle{definition}
\newtheorem{assumption}[theorem]{Assumption}
\newtheorem{definition}[theorem]{Definition}
\newcommand\cbrk{\text{$]$\kern-.15em$]$}}
\newcommand\opar{\text{\,\raise.2ex\hbox{${\scriptstyle
|}$}\kern-.34em$($}}
\newcommand\cpar{\text{$)$\kern-.34em\raise.2ex\hbox{${\scriptstyle |}$}}\,}
\newcommand\cbar{\text{$[$\kern-.15em$[$}}
\newcommand\bC{\mathbb{C}}
\newcommand\bL{\mathbb{L}}
\newcommand\bR{\mathbb{R}}
\newcommand\bH{\mathbb{H}}
\newcommand\bZ{\mathbb{Z}}
\newcommand\bE{\mathbb{E}}
\newcommand\bN{\mathbb{N}}
\newcommand\cA{\mathcal{A}}
\newcommand\cB{\mathcal{B}}
\newcommand\cD{\mathcal{D}}
\newcommand\cF{\mathcal{F}}
\newcommand\cH{\mathcal{H}}
\newcommand\cP{\mathcal{P}}
\newcommand\cS{\mathcal{S}}
\newcommand\cZ{\mathcal{Z}}
\newcommand{\mysection}[1]{\section{#1}
\setcounter{equation}{0}}
\newcommand{\Ccinf}{C_{c}^{\infty}}
\newcommand{\R}{\mathbb{R}}
\begin{document}

\title[TIME-FRACTIONAL SPDES DRIVEN BY L\'EVY PROCESSES]
{A Sobolev space theory for the time-fractional  stochastic partial differential equations driven by L\'evy processes}

\author{Kyeong-Hun Kim}
\address{Department of Mathematics, Korea University, 1 Anam-dong,
Sungbuk-gu, Seoul, 136-701, Republic of Korea}
\email{kyeonghun@korea.ac.kr}

\thanks{The authors were  supported by the National Research Foundation of Korea(NRF) grant funded by the Korea government(MSIT) (No. NRF-2019R1A5A1028324)}

\author{Daehan Park}
\address{Stochastic Analysis and Application Research Center, Korea Advanced Institute of Science and Technology, 291 Daehak-ro, Yuseong-gu, Daejeon, 34141, Republic of Korea}
\email{daehanpark@kaist.ac.kr}

\subjclass[2010]{60H15, 35R60, 45D05}

\keywords{Stochastic partial differential equations, Time-fractional  derivatives, L\'evy processes, Maximal $L_p$-regularity}

\begin{abstract}
We present an  $L_{p}$-theory ($p\geq 2$) for semi-linear time-fractional stochastic partial differential equations driven by L\'evy processes of the type
$$
\partial^{\alpha}_{t}u=\sum_{i,j=1}^d a^{ij}u_{x^{i}x^{j}} +f(u)+\sum_{k=1}^{\infty}\partial^{\beta}_{t}\int_{0}^{t} (\sum_{i=1}^d\mu^{ik} u_{x^i} +g^k(u)) dZ^k_{s}
$$
given with nonzero intial data. Here $\partial^{\alpha}_t$ and $\partial^{\beta}_t$ are the Caputo fractional derivatives, 
$$0<\alpha<2, \qquad \beta<\alpha+1/p,
$$ and  $\{Z^k_t:k=1,2,\cdots\}$ is a sequence of independent L\'evy processes. The coefficients are random functions depending on $(t,x)$.   We prove the uniqueness and existence results in Sobolev spaces, and obtain the maximal regularity of the solution.

 As an application, we also  obtain an $L_p$-regularity theory of  the equation
$$
\partial^{\alpha}_{t}u=\sum_{i,j=1}^d a^{ij}u_{x^{i}x^{j}} +f(u)+\partial^{\beta}_t \int^t_0 h(u)d\mathcal{Z}_s,
$$
where $\dot{\mathcal{Z}}_t$ is a multi-dimensional L\'evy space-time  white noise  with the space dimension
$ d<4-\frac{2(2\beta-2/p)^{+}}{\alpha}$. 
In particular, if $\beta<\alpha/4+1/p$ then one can take $d=1,2,3$.
\end{abstract}

\maketitle

\mysection{introduction}
Let $\{W^{k}_{t}:k\in 1,2,\cdots\}$ and $\{Z^{k}_{t}:k=1,2,\cdots\}$ be sequences of independent  one dimensional Brownian motions and $d_1$-dimensional L\'evy processes respectively.   In this  article we  present an $L_p$-theory ($p\geq 2$)  for the time-fractional stochastic partial differential equation  (SPDE) defined on $\bR^d$:
\begin{equation}\label{eqn 07.16.2}
\begin{aligned}
\partial^{\alpha}_{t}u=&a^{ij}u_{x^ix^j}+b^{i}u_{x^i}+cu+f(u)
\\
&+\partial^{\beta_{1}}_{t}\int_{0}^{t}\left(\mu^{ik}u_{x^i}+\nu^{k}u+g^{k}(u)\right)dW^{k}_{s}
\\
&+\partial^{\beta_{2}}_{t}\int_{0}^{t}\left(\bar{\mu}^{irk}u_{x^i}+\bar{\nu}^{rk}u+h^{rk}(u)\right)dZ^{rk}_{s}, \quad t>0,
\\
&u(0,\cdot)=u_{0},\quad 1_{\alpha>1} \partial_{t} u(0,\cdot)=1_{\alpha>1}v_{0}.
\end{aligned}
\end{equation}
Here $\partial^{\alpha}_{t}, \partial^{\beta_1}_t, \partial^{\beta_2}_t$ are  the Caputo fractional derivatives, 
\begin{equation}
 \label{albe}
 \alpha\in(0,2), \quad \beta_{1}<\alpha+1/2, \quad \beta_{2}<\alpha+1/p,
 \end{equation}
  and  Einstein's summation convention is used  in \eqref{eqn 07.16.2} for the repeated indices $i,j\in \{1,2,\cdots,d\}$, $r\in \{1,2,\cdots, d_1\}$ and $k=1,2,\cdots$. The coefficients depend on $(\omega,t,x)$ and initial data depend on $(\omega,x)$. The   conditions on $\beta_1$ and $\beta_2$ in \eqref{albe} are necessary  and  will  be discussed later (see Remark \ref{remark con}).

Equation \eqref{eqn 07.16.2}  is understood by its integral form (see Definition \ref{def 07.11.1}), and this type of
SPDE naturally arises, for instance, when one  describes the random effects on transport of
particles   subject to sticking and trapping or particles in medium with thermal memory. See \cite{chen2015fractional}  for a detailed derivation of such equations.

This article is a natural continuation of   \cite{kim16timefractionalspde}, which deals with the equation driven  by Wiener processes. We extend the result of  \cite{kim16timefractionalspde}   to more general equation, that is, equation \eqref{eqn 07.16.2} driven also by L\'evy processes. Such generalization is important in applications since the  random effects on natural phenomena  can be discontinuous in time. Another generalization is that, unlike in \cite{kim16timefractionalspde}, we impose non-zero initial data.  Actually, even for the  deterministic  initial-value problem
$$
\partial^{\alpha}_tu=\Delta u, \quad t>0\,; \quad u(0,\cdot)=u_0,\,\, 1_{\alpha>1}\partial_tu(0)=1_{\alpha>1}v_0,
$$
 our result is partially new and  an extension of \cite[Theorem 3.1]{zacher2006quasilinear}, which is based on the semi-group approach and requires some extra algebraic conditions such as 
$\alpha\not\in \{\frac{1}{p},  1+\frac{1}{p}\}$. Our approach is based on Littlewood-Paley theory and we only assue $\alpha \in (0,2)$.

To explain a  technical difference between the equation with Wiener processes and the equation with L\'evy processes, let us consider the model equation
$$
 \partial^{\alpha}_t u=\Delta u+\partial^{\beta}_t\int^t_0 h(s) \, dX_t, \,\,\, t>0 \quad ; \quad u(0)=1_{\alpha>1}\partial_{t}u(0)=0.
 $$
 It turns out that if $X_t$ is a Wiener process, then 
 for any $n\geq 0$ and $p\geq 2$, we have 
\begin{equation}
\label{eqn wiener}
\|D^n_x u\|^p_{\bL_p(T)} \leq C  \Big\| \left( \int^t_0  \left| \big(D^n_x D^{\beta-\alpha}_t p(t-s,\cdot)\big) \ast_x h(s,\cdot) \right|^2 ds\right)^{1/2}\Big\|^p_{\bL_p(T)}, 
\end{equation}
where $\bL_p=L_p(\Omega\times [0,T]; L_p(\bR^d))$, and $p(t,x)$ is the fundamental solution to the fractional heat equation $\partial^{\alpha}_tv=\Delta v$. On the other hand, if $X_t$ is a L\'evy process, then we have
\begin{equation}
\label{levy}
\|D^n_xu\|^p_{\bL_p(T)}\leq C  \Big\|\int^t_0 \left| \big(D^n_xD^{\beta-\alpha}_t p(t-s,\cdot)\big) \ast_x h(s,\cdot) \right|^pds \Big\|_{L_1(\Omega\times [0,T]; L_1(\bR^d))}.
\end{equation}
A sharp estimate of the right hand side of \eqref{eqn wiener} is introduced in \cite{kim16timefractionalspde}, and in this article we obtain a sharp upper bound of  the right hand side of \eqref{levy} with the help of  Littlewood-Paley theory in harmonic analysis.

Below we introduce some related results. To the best of our knowledge,  the regularity result for the time fractional SPDE was firstly introduced in  \cite{desch2013maximal, desch2008stochastic, desch2011p}. The authors in  \cite{desch2013maximal, desch2008stochastic, desch2011p}  applied $H^{\infty}$-functional calculus technique to obtain   a maximal
 regularity   for the mild solution to
  the integral equation 
\begin{equation}
               \label{eqn h}
U(t)+ \int^t_0(t-s)^{\alpha-1}AU(s)ds=\int^t_0(t-s)^{\beta-1}G(s)dW_s,
\end{equation}
where $W_t$ is a Brownian motion, and $A$ is the generator of a bounded analytic semigroup   and is assumed to  admit  a bounded
$H^{\infty}$-calculus on $L_p$.  Quite recently, non-linear SPDE of type \eqref{eqn h} with non-linear term $A(U)$ in place of $AU$ was studied in  \cite{roeckner} in the  Gelfand triple setting with the restriction $\alpha<1$ and $\beta<(\alpha+1/2)\vee 1$.

With regard to  equation \eqref{eqn 07.16.2},
an $L_2$-theory  was introduced  in \cite{chen2015fractional} for the equation driven only by Wiener processes, and the result of \cite{chen2015fractional}  was extended in \cite{kim16timefractionalspde} for $p\geq 2$.   The zero initial condition is assumed in both  \cite{chen2015fractional} and \cite{kim16timefractionalspde}. 

Actually, equation   \eqref{eqn 07.16.2} can be written in the integral form like \eqref{eqn h}, and  it is much more general than \eqref{eqn h} in the sense that it involves multiplicative noises and random operators depending also on $(t,x)$ together with  non-zero initial data.  We do not impose unnecessary algebraic conditions on $\alpha,\beta_1,\beta_2$, and most importantly  our equation is driven by more general processes, that is L\'evy processes. However, our results do not cover those in \cite{desch2013maximal, desch2008stochastic, desch2011p, roeckner} because the operator $A$ can belong to quite large  class of  operators.

For the deterministic counterpart of our result we refer e.g to \cite{dong2019lp,kim17timefractionalpde,zacher2006quasilinear}. We also refer to \cite{chen2014lp,kim2014sobolev,kry99analytic} for the classical case $\alpha=\beta_1=\beta_2=1$. 

This article is organized as follows. In Section 2 we introduce stochastic calculus related to L\'evy processes, preliminary results on the fractional calculus, and some properties of the solution space,  and  we 
 present our main result, Theorem \ref{thm 10.08.10:57}. 
 In Section 3 we use Littlewood-Paley theory to obtain key estimates for solutions.  In Section 4 we prove our main result.   In Section 5 we give an application to SPDEs driven by L\'evy space-time white noise.

Finally we introduce   notation used in this article. We use ``:='' to denote
a definition. As usual, $\mathbb{R}^{d}$ stands for the $d$-dimensional
Euclidean space of points $x=(x^{1},\ldots,x^{d})$.  $\bN$  denotes the set of natural numbers and $\bN_+=\{0\} \cup \bN$.  For $i=1,2,\cdots,d$ and  multi-index $\mathfrak{a}=(\mathfrak{a}_{1},\cdots,\mathfrak{a}_{d})$, where $\mathfrak{a}_{i} \in \bN_+$, we set
\begin{equation*}
D_{i}u=u_{x^{i}}=\frac{\partial}{\partial x^{i}}u,\quad D^{\mathfrak{a}}u=D^{\mathfrak{a}_{1}}_{1}\dots D^{\mathfrak{a}_{d}}_{d}u,\quad 
|\mathfrak{a}|=\mathfrak{a}_{1}+\dots+\mathfrak{a}_{d}.
\end{equation*} 
We also use  $D^{m}_{x}$ or $D^m$ to denote  arbitarry $m$-th order partial derivative with respect to $x$.  For $a,b\in \bR$,  $a\vee b:=\max(a,b)$ and $a^+:=a \vee 0$. By $\cF(f)$ or $\hat{f}$ we  denote  the Fourier transform of $f$.  $\Ccinf(\bR^d)$  denotes the set of infinitely differentiable functions with compact support in $\bR^d$,  $\cS(\R^{d})$ is the class of Schwartz functions on $\bR^d$, and $\cD=\cD(\bR^d)$ is the class of tempered distributions.  For $p\in[1,\infty]$, a measure space $(X,\cA,\mu)$, a normed vector space $B$ with norm $\|\cdot\|_{B}$,  $L_{p}(X,\cA;B)$ is the set of $B$-valued $\bar{\cA}$-measurable functions $f$ satisfying
\begin{equation*}
\|f\|_{L_{p}(X,\cA;B)}=\left(\int_{X}\|f(x)\|^{p}_{B}d\mu\right)^{1/p},
\end{equation*}
where $\bar{\cA}$ is the completion of $\cA$ with respect to $\mu$. 
We say $U$ is a version of $V$   if $\|U-V\|_B=0$ $\mu$-almost everywhere.  
If we write $C=C(a,b,\cdots)$, this means
that the constant $C$ depends only on $a,b,\cdots$.  Throughout
the article, for functions depending on $(\omega,t,x)$, the argument
$\omega \in \Omega$ will be usually omitted.

\mysection{main results}
First we introduce some definitions and facts related to the fractional calculus. For more detail, see e.g. \cite{baleanu12fractional,podlubny98fractional,richard14fractional,samko93fractional}.
For $\alpha>0$ and $\varphi\in L_1((0,T))$, the Riemann-Liouville fractional integral  of order $\alpha$ is defined by
$$ I_t^\alpha\varphi(t) := \frac{1}{\Gamma(\alpha)}\int_0^t(t-s)^{\alpha-1}\varphi(s)ds,\quad  t\leq T.
$$
For any $p\in[1,\infty]$,  we easily have
\begin{equation}\label{Jensen_fractional_integral}
\|I^\alpha \varphi \|_{L_p((0,T))}\leq C(\alpha,p,T)\| \varphi \|_{L_p((0,T))}.
\end{equation}
It is also easy to check that  if $\varphi \in L_p((0,T); B)$ and $\alpha>1/p$ then   $I^{\alpha}_t \varphi(t)$ is a continuous function satisfying $I^{\alpha}_t\varphi(0)=0$.   In particular if $\varphi$ is bounded, then $I^{\alpha}_t \varphi(t)$ is  continuous  for any $\alpha>0$.  The similar  statements hold if $\varphi(t)$ is an $L_p(\bR^d)$-valued (or Banach space-valued) function.

Let $n$ be the integer such that $n-1\leq \alpha<n$. If $\varphi$ is $(n-1)$-times differentiable, and $(\frac{d}{dt})^{n-1}I_{t}^{n-\alpha}\varphi$ is absolutely continuous on $[0,T]$, then the Riemann-Liouville fractional derivative $D_t^\alpha \varphi$ and  the Caputo fractional derivative $\partial_t^\alpha\varphi$ are defined by
\begin{equation*}
D_{t}^{\alpha}\varphi:=\left(\frac{d}{dt}\right)^{n}\left(I_{t}^{n-\alpha}\varphi\right),
\end{equation*}
\begin{equation}\label{Caputo_Riemann}
\partial^{\alpha}_{t}\varphi :=D^{\alpha}_{t}\left(\varphi(t)-\sum_{k=0}^{n-1}\frac{t^{k}}{k !}\varphi^{(k)}(0)\right).
\end{equation}
One can easily check that  for any $\alpha,\beta \geq0$, 
\begin{equation}
   \label{eqn 05.15.1}
I^{\alpha+\beta}\varphi(t)=I^{\alpha}I^{\beta}\varphi(t), \quad
D^\alpha D^\beta\varphi = D^{\alpha+\beta}\varphi,
\end{equation}
and
\begin{equation}\label{eqn 07.12.2}
D^\alpha I^\beta \varphi = \begin{cases} D^{\alpha-\beta}\varphi &\mbox{ if }\alpha>\beta \\ I^{\beta-\alpha}\varphi &\mbox{ if }\alpha\leq\beta\end{cases}.
\end{equation}

For $p>1$ and $\gamma \in \bR$, let
$H_{p}^{\gamma}=H_{p}^{\gamma}(\mathbb{R}^{d})$ denote  the class of all
tempered distributions $u$  on $\mathbb{R}^{d}$ such that
\begin{equation}
        \label{eqn norm}
\| u\| _{H_{p}^{\gamma}}:=\|(1-\Delta)^{\gamma/2}u\|_{L_{p}}<\infty,
\end{equation}
where
$$
(1-\Delta)^{\gamma/2} u = \cF^{-1} \left((1+|\xi|^2)^{\gamma/2}\cF (u) \right).
$$
The action of $u$ on $\phi\in\cS(\R^{d})$, which is denoted by $(u,\phi)$, is defined by
\begin{equation}
   \label{eqn 5.12.2}
(u,\phi):=((1-\Delta)^{\gamma/2}u, (1-\Delta)^{-\gamma/2}\phi).
\end{equation}
The number $\gamma$ is related to the regularity  of elements in above spaces. For instance,  if $\gamma=0, 1,2,\cdots$, then we have
$$
H^{\gamma}_p=\{u: D^{\mathfrak{a}} u\in L_p(\bR^d), \, \,\, |\mathfrak{a}|\leq \gamma\}, \quad \quad H^{-\gamma}_p=(H^{\gamma}_{p/{(p-1)}})^*.
$$
Let  $l_2$ denote the set of all sequences $a=(a^1,a^2,\cdots)$ such that
$$|a|_{l_{2}}:=\left(\sum_{k=1}^{\infty}|a^{k}|^{2}\right)^{1/2}<\infty.
$$
By $H_{p}^{\gamma}(l_{2})=H_{p}^{\gamma}(\bR^d,l_2)$  we denote the class of all $l_2$-valued
tempered distributions $v=(v^1,v^2,\cdots)$ on $\mathbb{R}^{d}$ such that
$$
\|v\|_{H_{p}^{\gamma}(l_{2})}:=\||(1-\Delta)^{\gamma/2}v|_{l_2}\|_{L_{p}}<\infty.
$$
Also we write  $h=(h^{1},\dots,h^{d_{1}})\in H^{\gamma}_{p}(l_{2},d_{1})$ if $h^1,h^2, \cdots, h^{d_1}$ are $l_2$-valued functions satisfying
$$
\|h\|_{H^{\gamma}_{p}(l_{2},d_{1})}:=\sum_{r=1}^{d_{1}}\|h^{r}\|_{H^{\gamma}_{p}(l_{2})}<\infty.
$$
Let $(\Omega,\mathscr{F},P)$ be a complete probability
space and $\{\mathscr{F}_{t},t\geq0\}$ be an increasing filtration of
$\sigma$-fields $\mathscr{F}_{t}\subset\mathscr{F}$, each of which
contains all $(\mathscr{F},P)$-null sets. We assume that
 independent families of one-dimensional
Wiener processes $\{W_{t}^{k}\}_{k\in\mathbb{N}}$ and $d_{1}$-dimensional L\'evy processes $\{Z_{t}^{k}\}_{k\in\mathbb{N}}$  relative to the
filtration $\{\mathscr{F}_{t},t\geq0\}$ are  given on $\Omega$.
By $\cP$ we denote the predictable $\sigma$-field generated by $\mathscr{F}_{t}$, i.e.,
$\cP$ is the smallest $\sigma$-field containing sets of the type $A \times (s,t]$, where $s<t$ and $A \in \mathscr{F}_s$.

For $p>1$ and $\gamma \in \bR$ denote
\begin{equation*}
\begin{gathered}
\mathbb{H}_{p}^{\gamma}(T):=L_{p}\left(\Omega\times
(0,T),\mathcal{P};H_{p}^{\gamma}\right),\quad\mathbb{L}_{p}(T)=\mathbb{H}_{p}^{0}(T),
\\
\mathbb{H}_{p}^{\gamma}(T,l_{2}):=L_{p}\left(\Omega\times(0,T),\mathcal{P};H_{p}^{\gamma}(l_{2})\right),\quad\mathbb{L}_{p}(T,l_{2})=\mathbb{H}_{p}^{0}(T,l_{2}),
\\
\mathbb{H}^{\gamma}_{p}(T,l_{2},d_{1}):=L_{p}\left(\Omega\times(0,T),\mathcal{P};H_{p}^{\gamma}(l_{2},d_{1})\right),\quad\mathbb{L}_{p}(T,l_{2},d_1)=\mathbb{H}_{p}^{0}(T,l_{2},d_{1}).
\end{gathered}
\end{equation*}

\begin{remark}
The spaces $\mathbb{H}_{p}^{\gamma}(T)$ are used  for (real-valued) solutions. The spaces $\mathbb{H}_{p}^{\gamma}(T,l_{2})$ and   $\mathbb{H}_{p}^{\gamma}(T,l_{2},d_1)$ are used for the integrands in the stochastic integrals against one-dimensional Wiener processes and $d_1$-dimensional L\'evy  processes, respectively. See \eqref{eqn 01.30.15:52}.
\end{remark}

 For
 $t\geq0$ and $A\in\cB(\R^{d_{1}}\setminus\{0\})$,  denote
\begin{equation*}
\begin{aligned}
&N^{k}(t,A):=\# \{0\leq s\leq t:\Delta Z^{k}_{s}:=Z^{k}_{s}-Z^{k}_{s-}\in A\}
\\
&\tilde{N}^{k}(t,A):=N^{k}(t,A)-t\nu^{k}(A),
\end{aligned}
\end{equation*}
where $\nu^{k}(A):=\bE N^{k}(1,A)$ is the L\'evy measure of $Z^{k}_{t}$. 
Set
\begin{equation*}
m_{p}(k):=\left(\int_{\R^{d_{1}}}|z|^{p}\nu^{k}(dz)\right)^{1/p}.
\end{equation*}
If $m_2(k)<\infty$, then by the L\'evy-It\^o decomposition, there exist a vector $a^{k}=(a^{1k},\dots,a^{d_{1}k})$, a non-negative definite $d_{1}\times d_{1}$ matrix $b^{k}$, and $d_{1}$-dimensional Wiener process $\tilde{W}^{k}_{t}$ such that
\begin{equation*}
Z^{k}_{t}=a^{k}t+b^{k}\tilde{W}^{k}_{t}+\int_{\bR^{d_1}}z\tilde{N}^{k}(t,dz)=:a^{k}t+b^{k}\tilde{W}^{k}_{t}+\tilde{Z}^k_t
 \end{equation*}
(i.e. $Z^{rk}_{t}=a^{rk}t+\sum_{l=1}^{d_{1}}b^{rlk}\tilde{W}^{l}_{t}
+\int_{\bR^{d_1}}z^{r}\tilde{N}^{k}(t,dz)$).

In this article, we assume the following.

\begin{assumption}\label{asm 07.10.1}
\begin{enumerate}[(i)]

\item $p\in[2,\infty)$ and 
\begin{equation*}
m_{p}:=\sup_{k}\big(m_{2}(k)\vee m_{p}(k)\big)<\infty.
\end{equation*}

\item  For each $k$, $Z^{k}_{t}=(Z^{1k},\dots,Z^{d_{1}k})$ is a pure jump process  with no diffusion and drift  parts (i.e. $a^k=0$ and $b^{k}=0$).

\end{enumerate}
\end{assumption}

\begin{remark}
We pose Assumption \ref{asm 07.10.1}   to have \eqref{levy}, for which the constant $C$ depends on $m_p$ (also see \eqref{eqn 5.19.5}).
Observe that for $p\geq 2$,
\begin{align*}
m^{p}_{p}(k)= \int_{\R^{d_1}} |z|^{p} \nu^{k}(dz).
\end{align*}
Hence, according to Assumption \ref{asm 07.10.1} (i), for larger $p$  we require more decay rate of the measure $\nu^{k}(dz)$ near infinity, that is as $|z|\to \infty$. Obviously Assumption \ref{asm 07.10.1} (i) holds if 
\begin{equation*}\label{eqn 01.05.20:14}
\sup_{t>0}\sup_{k}|\Delta Z^{k}_{t}|<\infty.
\end{equation*}
See \cite[Remark 2.2]{kim2014sobolev}. 
\end{remark}

\begin{remark}
\label{remark men}
One can also consider   equation \eqref{eqn 07.16.2} with general $Z^k_t$, without Assumption \ref{asm 07.10.1} (ii),  by rewriting  it into the form of the equation driven by a  set of Brownian motions $\{W^k_t\} \cup \{\tilde{W}^k_t\}$ and L\'evy processes  $\tilde{Z}^k_t$.  See (2.1) in \cite{kim2014sobolev} for detail. 
\end{remark}

\begin{remark}
If one only wants to prove the uniqueness and existence of $H^{\gamma+2}_{p}$-valued path-wise solution $u$, then Assumption \ref{asm 07.10.1} (i) can be replaced by the weaker condition that there is an integer $k_{0}\geq 1$ so that $\sup_{k\geq k_{0}}m_{p}(k)<\infty$. In particular, it can be completely dropped if only finitely many L\'evy processes appear in \eqref{eqn 07.16.2}. However, under this conditon we may have $\mathbb{E}\int_{0}^{T}\|u\|^{p}_{H^{\gamma+2}_{p}}dt=\infty$. See the proof of \cite[Theorem 4.9]{kim2012lp}.
\end{remark}

Due to the assumption $m_2(k)<\infty$,  $Z^{k}_{t}$ is a square integrable martingale, and  the stochastic integral against $Z^{rk}_t$ ($r=1,\cdots, d_1$)  can be easily understood as follows.  For functions $h$ of the type $h=\sum_{i=1}^m a_i1_{(\tau_i, \tau_{i+1}]}(t)$, 
where $\tau_i$ are bounded stopping times, $\tau_i\leq \tau_{i+1}$, and $a_i$ are bounded $\cF_{\tau_i}$-measurable random variables, we define
$$
(\Lambda h)_t:=\int^t_0 h \,dZ^{rk}_s:=\sum_{i=1}^m a_i (Z^{rk}_{\tau_{i+1} \wedge t}-Z^{rk}_{\tau_i \wedge t}).
$$
Then $\Lambda h$ becomes   a square integrable martingale with   c\`adl\`ag  sample paths, and one can easily check
$$
 \bE \sup_{t\leq T}|(\Lambda h)_t|^2\leq c_2(k) \|h\|^2_{L_2(\Omega \times [0,T])}.
$$
Therefore, the stochastic integral can be continuously extended to all $h\in L_2(\Omega\times [0,T], \cP; \bR)$, and $\int^t_0 h \,dZ^{rk}_t$ becomes a square integrable martingale with c\`adl\`ag sample paths. Furthermore, if $h_1=h_2$ in 
$L_2(\Omega\times [0,T], \cP; \bR)$, then 
$$\int^t_0 h_1 \,dZ^{rk}_t=\int^t_0 h_2 \,dZ^{rk}_t, \quad  \quad \forall\,\,  t\leq T \,\,(a.s.).
$$
This is  because  both are  c\`adl\`ag processes. For functions  $h=(h^{1},\dots,h^{d_{1}})\in L_{2}(\Omega\times[0,T],\cP;\R^{d_{1}})$ and $d_1$-dimentional L\'evy process $Z^k_t$ (here $k$ is fiexed), we define 
\begin{equation}\label{eqn 01.30.15:52}
\int^t_0 h \cdot dZ^k_t=\sum_{r=1}^{d_1}\int^t_0 h^r(s) dZ^{rk}_s, \quad t\leq T.
\end{equation}

\begin{remark}\label{rmk 07.10.2}
For any $h=(h^{1},\dots,h^{d_{1}})\in L_{2}(\Omega\times[0,T],\cP;\R^{d_{1}})$ with a predictable version $\bar{h}$,
$$
M^{k}_{t}=\int_{0}^{t}h \cdot dZ^{k}_{s}:=\sum_{r=1}^{d_1} \int^t_0 h^r dZ^{rk}_t=\sum_{r=1}^{d_1} \int^t_0 \bar{h}^r dZ^{rk}_t
$$
 is a square integrable martingale with the quadratic variation (see e.g. \cite{protter2005stochastic})
\begin{equation}\label{eqn 08.05.2}
\begin{aligned}
&[M^{k}]_{t}=\sum_{r,l=1}^{d_{1}}\int_{0}^{t}\int_{\R^{d_{1}}}z^{r}z^{l}\bar{h}^{r}_{s}\bar{h}^{l}_{s} N^{k}(ds,dz).
\end{aligned}
\end{equation}

\noindent
By \cite[Lemma 2.5]{chen2014lp} (or see \cite[Lemma 1] {mikulevicius2012l_p}) we have
\begin{equation}
\label{eqn 5.19.5}
\begin{aligned}
&\bE\left[\left(\sum_{k=1}^{\infty}\int_{0}^{T}\int_{\R^{d_{1}}}|z|^{2}|\bar{h}^{k}(s)|^{2}N^{k}(ds,dz)\right)^{p/2}\right]
\\
&\quad\leq  C(p,m_{p})\bE\left[\left(\int_{0}^{T}\sum_{k=1}^{\infty}|h^{k}(s)|^{2}ds\right)^{p/2}+\int_{0}^{T}\sum_{k=1}^{\infty}|h^{k}(s)|^{p}ds\right],
\end{aligned}
\end{equation}
where $|h^{k}(s)|^{2}=\sum_{r=1}^{d_{1}}|h^{rk}(s)|^{2}$. Since 
$$
\sum_{k=1}^{\infty}|a_{k}|^{p}\leq \left(\sum_{k=1}^{\infty}|a_{k}|^{2}\right)^{p/2},\quad \left(\int_{0}^{t}|h|ds\right)^{p/2}\leq t^{p/2-1}\int_{0}^{t}|h|^{p/2}ds,
$$ (recall that $p\geq2$), we have
\begin{equation}\label{eqn 07.16.1}
\bE\left[\left(\sum_{k=1}^{\infty}\int_{0}^{T}\int_{\R^{d_{1}}}|z|^{2}|\bar{h}^{k}(s)|^{2}N^{k}(ds,dz)\right)^{p/2}\right] \leq C \sum_{r=1}^{d_{1}}\bE \|h^{r}\|^{p}_{ L_{p}([0,T];l_{2})},
\end{equation}
where $C=C(p,m_p,d_{1},T)$. Therefore by the Burkholder-Davis-Gundy inequality, \eqref{eqn 08.05.2}, and \eqref{eqn 07.16.1},
\begin{equation}\label{eqn 08.05.3}
\begin{aligned}
\bE\left[\sup_{s\leq t}\left|\sum_{k=1}^{\infty}M^{k}_{s}\right|^{p}\right] \leq C \sum_{r=1}^{d_{1}} \bE\|h^{r}\|^{p}_{L_{p}([0,T];l_{2})}.
\end{aligned}
\end{equation}
\end{remark}

\begin{remark}\label{rmk 07.10.1}
(i) If $g\in\mathbb{H}^{\gamma}_{p}(T,l_{2})$, and $h\in\mathbb{H}^{\gamma}_{p}(T,l_{2},d_{1})$, then the series
\begin{equation*}
\begin{gathered}
\sum_{k=1}^{\infty}\int_{0}^{t}(g^{k}(s,\cdot),\phi)dW^{k}_{s},
\\
\sum_{k=1}^{\infty}\int_{0}^{t}(h^{k}(s,\cdot),\phi) \cdot dZ^{k}_{s}:=\sum_{k=1}^{\infty}\sum_{r=1}^{d_{1}}\int_{0}^{t}(h^{rk}(s,\cdot),\phi) dZ^{rk}_{s}
\end{gathered}
\end{equation*}
are well-defined due to Assumption \ref{asm 07.10.1} and Remark \ref{rmk 07.10.2}. Indeed,  using Remark \ref{rmk 07.10.2}  one can show (see \cite[Remark 3.2]{kry99analytic} for detail) 
\begin{equation*}
\sum_{r=1}^{d_{1}}\sum_{k=1}^{\infty}\int_{0}^{T}(h^{rk},\phi)^{2}ds\leq C(\phi,m_{p},d_{1},T)\|h\|^{p}_{\mathbb{H}^{\gamma}_{p}(T,l_{2},d_{1})}.
\end{equation*}
Therefore, the series 
$$
\sum_{k=1}^{\infty}\int_{0}^{t}(h^{k}(s,\cdot),\phi) \cdot dZ^{k}_{s}:=\sum_{k=1}^{\infty}\sum_{r=1}^{d_{1}}\int_{0}^{t}(h^{rk}(s,\cdot),\phi) dZ^{rk}_{s}
$$
converges in probability uniformly on $[0,T]$, and it is a square integrable martingale on $[0,T]$, which is c\`adl\`ag.  The same argument holds for $\sum_{k=1}^{\infty}\int_{0}^{t}(g^{k}(s,\cdot),\phi)dW^{k}_{s}$, which  is a continuous martingale on $[0,T]$.

(ii) The argument in (i) shows that if, for instance, $h_n \to h$ in $\mathbb{H}^{\gamma}_{p}(T,l_{2},d_{1})$, then as $n\to \infty$,
$$ \sum_{k=1}^{\infty}\int_{0}^{t}(h^{k}_n(s,\cdot),\phi) \cdot dZ^{k}_{s} \quad \to \quad \sum_{k=1}^{\infty}\int_{0}^{t}(h^{k}(s,\cdot),\phi) \cdot dZ^{k}_{s}
$$
in probability uniformly on $[0,T]$.
\end{remark}

We say that  $X_t=Y_t$ for almost all $t\leq T$ at once if 
$$
P\big(\{\omega: X_t(\omega)=Y_t(\omega), \,a.e. \,t\leq T\}\big)=1,
$$
and  $X_t=Y_t$ for all $t\leq T$ at once if 
$$
P\big(\{\omega: X_t(\omega)=Y_t(\omega), \forall \,t\leq T\}\big)=1.
$$

\begin{lemma}\label{lem 07.11.1}
Let $X^{k}_{t}=W^{k}_{t}$ or $X^{k}_{t}=Z^{rk}_{t}$, $r\in \{1,\cdots, d_1\}$,
and $h\in L_{2}(\Omega\times[0,T],\cP;l_{2})$.

(i)  Let $\alpha>0$ and  $h=(h^1,h^2,\cdots) \in \bL_2(T,l_2)$. Then 
 \begin{eqnarray*}
 I^{\alpha }\left(\sum_{k=1}^{\infty} \int^{\cdot}_0 h^k (s) dX^k_s \right)(t)
 &=&\sum_{k=1}^{\infty} I^{\alpha} \left( \int^{\cdot}_0 h^k (s) dX^k_s \right)(t)
 \end{eqnarray*}
for almost all $t\leq T$ at once. 

(ii) Under the assumptions in (i), 
$$
\sum_{k=1}^{\infty} I^{\alpha} \left( \int^{\cdot}_0 h^k (s) dX^k_s \right)(t)=\frac{1}{\alpha\Gamma(\alpha)}\sum_{k=1}^{\infty}\int^t_0(t-s)^{\alpha}h^k(s)dX^k_s
$$
 a.e. on  $\Omega\times [0,T]$.

(iii)  If $\alpha<1/2$, then 
\begin{eqnarray*}
D^{\alpha}_t \left(\sum_{k=1}^{\infty}\int_{0}^{\cdot}h^{k}(s)dX^{k}_{s}\right)(t)&=&\sum_{k=1}^{\infty} D^{\alpha}_t \left(\int_{0}^{\cdot}h^{k}(s)dX^{k}_{s}\right)(t)\\
&=&\frac{1}{\Gamma(1-\alpha)}\sum_{k=1}^{\infty}\int_{0}^{t}(t-s)^{-\alpha}h^{k}(s)dX^{k}_{s}
\end{eqnarray*}
 a.e. on  $\Omega\times [0,T]$.

\end{lemma}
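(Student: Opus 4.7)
My plan is to handle the three parts in the order (ii), (i), (iii). Part (ii) is an identity for a single stochastic integral and will follow from a stochastic Fubini theorem. Part (i) is an interchange of an infinite sum with $I^{\alpha}$ and will be proved by a dominated-convergence argument based on the $L_{1}$-boundedness of $I^{\alpha}$ recorded in \eqref{Jensen_fractional_integral}. Part (iii) reduces to (i) and (ii) applied with $\alpha$ replaced by $1-\alpha$, together with a second Fubini swap that identifies the time derivative.

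For part (ii), unfolding the definitions, the left-hand side is the iterated integral
\begin{equation*}
I^{\alpha}\bigl(\textstyle\int_{0}^{\cdot}h^{k}(s)\,dX^{k}_{s}\bigr)(t)=\frac{1}{\Gamma(\alpha)}\int_{0}^{t}(t-r)^{\alpha-1}\Bigl(\int_{0}^{r}h^{k}(s)\,dX^{k}_{s}\Bigr)dr.
\end{equation*}
Since $h^{k}\in L_{2}(\Omega\times[0,T],\cP)$ and $(t-r)^{\alpha-1}$ is integrable on $[s,t]$, a stochastic Fubini theorem (applicable to both Wiener and square-integrable pure-jump L\'evy integrals, see \cite{protter2005stochastic}) lets us swap the $dr$ and $dX^{k}_{s}$ integrals, producing
\begin{equation*}
\frac{1}{\Gamma(\alpha)}\int_{0}^{t}h^{k}(s)\Bigl(\int_{s}^{t}(t-r)^{\alpha-1}dr\Bigr)dX^{k}_{s}=\frac{1}{\alpha\Gamma(\alpha)}\int_{0}^{t}(t-s)^{\alpha}h^{k}(s)\,dX^{k}_{s}.
\end{equation*}

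For part (i), set $\varphi^{k}(t):=\int_{0}^{t}h^{k}(s)\,dX^{k}_{s}$ and $S_{N}(t):=\sum_{k=1}^{N}\varphi^{k}(t)$. Inequality \eqref{eqn 08.05.3} (in the L\'evy case) and the Burkholder--Davis--Gundy inequality (in the Wiener case), combined with Assumption \ref{asm 07.10.1}, imply that $S_{N}$ converges to $S:=\sum_{k=1}^{\infty}\varphi^{k}$ in $L_{2}(\Omega)$ uniformly in $t\in[0,T]$; in particular, $S_{N}\to S$ in $L_{1}(\Omega\times[0,T])$. The bound \eqref{Jensen_fractional_integral} then yields
\begin{equation*}
\bE\int_{0}^{T}|I^{\alpha}S_{N}(t)-I^{\alpha}S(t)|\,dt\leq C(\alpha,T)\,\bE\int_{0}^{T}|S_{N}(t)-S(t)|\,dt\longrightarrow 0,
\end{equation*}
so that along a subsequence $I^{\alpha}S_{N}\to I^{\alpha}S$ a.e.\ on $\Omega\times[0,T]$. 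Since $I^{\alpha}S_{N}=\sum_{k=1}^{N}I^{\alpha}\varphi^{k}$, the claim of (i) follows, and combining with (ii) identifies the limit as the explicit series in the second display of part (ii).

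For part (iii), fix $\alpha<1/2$; then $(t-s)^{-\alpha}\in L_{2}([0,t])$, so each integral $\int_{0}^{t}(t-s)^{-\alpha}h^{k}(s)\,dX^{k}_{s}$ is a well-defined square integrable martingale integral and, by the argument used in (i), the $k$-sum converges. Because $\varphi^{k}(0)=0$, the Caputo and Riemann--Liouville derivatives coincide, i.e.\ $D^{\alpha}_{t}\varphi^{k}=\frac{d}{dt}\,I^{1-\alpha}\varphi^{k}$. Applying (ii) with $\alpha$ replaced by $1-\alpha$ gives
\begin{equation*}
I^{1-\alpha}\varphi^{k}(t)=\frac{1}{(1-\alpha)\Gamma(1-\alpha)}\int_{0}^{t}(t-s)^{1-\alpha}h^{k}(s)\,dX^{k}_{s}.
\end{equation*}
A second stochastic Fubini swap using $(1-\alpha)\int_{s}^{t}(r-s)^{-\alpha}dr=(t-s)^{1-\alpha}$ shows that this expression equals $\int_{0}^{t}\bigl(\tfrac{1}{\Gamma(1-\alpha)}\int_{0}^{r}(r-s)^{-\alpha}h^{k}(s)\,dX^{k}_{s}\bigr)dr$, so its $t$-derivative is exactly $\tfrac{1}{\Gamma(1-\alpha)}\int_{0}^{t}(t-s)^{-\alpha}h^{k}(s)\,dX^{k}_{s}$. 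Interchanging the $k$-sum with $D^{\alpha}_{t}$ is handled by the same dominated-convergence scheme used for (i), now relying on the $L_{1}$-boundedness of $I^{1-\alpha}$.

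The step I expect to be most delicate is the differentiation-after-Fubini in (iii): one must argue that the pointwise $t$-derivative of $\int_{0}^{t}(t-s)^{1-\alpha}h^{k}(s)\,dX^{k}_{s}$ truly equals the stochastic integral with the singular kernel $(t-s)^{-\alpha}$, not just differs by a $t$-null set that might depend on the sample path. Writing the candidate derivative as an integrand whose time-integral reproduces $I^{1-\alpha}\varphi^{k}$ (via the second Fubini above) identifies the derivative in the distributional sense, and the c\`adl\`ag regularity of both sides then promotes the equality to an a.e.\ identity on $\Omega\times[0,T]$.
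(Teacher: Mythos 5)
Your proposal is correct and is essentially the argument the paper relies on: the paper proves (i) and (iii) only by citing Lemmas 3.1 and 3.3 of \cite{chen2015fractional} (whose proofs run on exactly the $L_1$-boundedness of $I^{\alpha}$ plus stochastic Fubini that you reconstruct), and proves (ii) by the same appeal to the stochastic Fubini theorem of \cite[Chapter IV, Theorem 65]{protter2005stochastic}. Your handling of the delicate differentiation step in (iii) — representing $I^{1-\alpha}\varphi^{k}$ as $\int_{0}^{t}\psi^{k}(r)\,dr$ via a second Fubini swap and then invoking Lebesgue differentiation — is the right way to make that step rigorous.
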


\begin{proof}
 See Lemmas 3.1 and 3.3 in \cite{chen2015fractional} for (i) and (iii). Actually,   the case $X^{k}_{t}=W^{k}_{t}$ is proved in \cite{chen2015fractional}, and the same argument works for the general case  for $X^{k}_{t}=Z^{rk}_{t}$.

(ii) easily  follows from  the Stochastic Fubuni theorem (see \cite[Chapter IV, Theorem 65]{protter2005stochastic}).
\end{proof}

 Now we define spaces for initial data.  Set
$$
U^{\gamma+2}_{p}=L_{p}(\Omega,\mathscr{F}_{0};H^{\gamma+(2-2/\alpha p)^{+}}_{p}),
$$
and
\begin{equation}\label{eqn 08.09.4}
\begin{gathered}
V^{\gamma+2}_{p}= \begin{cases}L_{p}(\Omega,\mathscr{F}_{0};H^{\gamma+2-2/\alpha-2/\alpha p}_{p})&\quad \alpha>1+1/p
\\
L_{p}(\Omega,\mathscr{F}_{0};H^{\gamma+2-2/\alpha}_{p})&\quad 1<\alpha\leq 1+1/p.\end{cases}
\end{gathered}
\end{equation}
Note that if $\alpha>1+1/p$, then $2-2/\alpha-2/\alpha p>0$, and  $2-2/\alpha>0$ for any $\alpha>1$.

Fix a small   constant $\kappa>0$. Set
\begin{equation}\label{eqn 08.09.1}
\begin{gathered}
c_{0}:=1_{\beta_1>1/2} \frac{(2\beta_{1}-1)}{\alpha}+\kappa 1_{\beta_{1}=1/2},
\\
\bar{c}_{0}:=1_{\beta_2>1/p} \frac{(2\beta_{2}-2/p)}{\alpha}+\kappa 1_{\beta_{2}=1/p}.
\end{gathered}
\end{equation}
Note that $0\leq c_{0},\bar{c}_{0}< 2$,  $c_{0}=0$ if $\beta_1<1/2$,  and $\bar{c}_{0}=0$ if $\beta_{2}<1/p$. The constants $c_0$ and $\bar{c}_0$ are introduced to indicate the regularity (or differentiabliy) differences beween the solutions   free terms in stochastic parts of the equation, and the choice of these constants is optimal.  See Remark \ref{remark illu} for detail.

\begin{definition}\label{def 07.11.1} 
Let $p\geq 2$ and  $\gamma \in \bR$.  We write $u\in \cH^{\gamma+2}_p(T)$ if 
 $u\in \bH^{\gamma+2}_p(T)$ and there exist
$f\in\bH^{\gamma}_{p}(T),g\in \bH^{\gamma+c_0}_{p}(T,l_{2}),h\in \bH^{\gamma+\bar{c}_0}_{p}(T,l_{2},d_{1}),
 u_{0}\in U^{\gamma+2}_{p}$, and $v_{0}\in V^{\gamma+2}_p$ such that $u$ satisfies
\begin{equation}\label{eqn 07.10.2}
\begin{aligned}
&\partial^{\alpha}_{t}u(t,x)=f(t,x)+\partial^{\beta_{1}}_{t}\sum_{k=1}^{\infty}\int_{0}^{t}g^{k}(s,x)dW^{k}_{s}+\partial^{\beta_{2}}_{t}\sum_{k=1}^{\infty}\int_{0}^{t}h^{k} (s,x)   \cdot dZ^{k}_{s},\quad t\in(0,T]
\\
& u(0,\cdot)=u_{0},\quad  1_{\alpha>1}\partial_tu(0,\cdot)=1_{\alpha>1}v_{0}
\end{aligned}
\end{equation}
 in the sense of distributions. In other words, 
 for any $\phi\in\cS(\R^{d})$,  the equality
 \begin{equation}\label{eqn 07.12.2}
\begin{aligned}
(u(t)-u_{0}-tv_{0}1_{\alpha>1},\phi)&=I^{\alpha}_{t}(f,\phi)+\sum_{k=1}^{\infty}I^{\alpha-\beta_{1}}_{t}\int_{0}^{t}(g^{k}(s),\phi)dW^{k}_{s}
\\
&\quad\quad+\sum_{k=1}^{\infty}I^{\alpha-\beta_{2}}_{t}\int_{0}^{t}(h^{k}(s),\phi) \cdot dZ^{k}_{s}
\end{aligned}
\end{equation}
holds  a.e.  on  $\Omega\times [0,T]$,  (here $I^{\alpha-\beta_{i}}_{t}:=D^{\beta_{i}-\alpha}_{t}$ if $\beta_{i}>\alpha$). 
  \end{definition}
 
 \begin{remark} 
Note that, since $\beta_1<\alpha+1/2$ and $\beta_2<\alpha+1/p$, 
the right hand side of \eqref{eqn 07.12.2} makes sense due to  Lemma \ref{lem 07.11.1}.   \end{remark}

 \begin{remark} \label{remark con}
 If $\beta_2 > \alpha+1/p$ then  \eqref{eqn 07.12.2} does not make sense. For simplicity, let $u_0=v_0=0, f=0$ and $g=0$. Then taking $I^{\beta_2-\alpha}_t$ to  \eqref{eqn 07.12.2} we get
 $$
 I^{\beta_2-\alpha}_t(u(t),\phi)=\sum_{k=1}^{\infty}\int^t_0 (h^k(s),\phi)  \cdot dZ^k_s.
 $$
Since $(u(t),\phi)\in L_p([0,T])$ (a.s.) and $\beta_2-\alpha>1/p$, the left hand side above is a continuous process. However, the right hand side is only c\`adl\`ag process. The necessity of condition $\beta_1<\alpha+1/2$ can be derived similarly, and is explained in detail in \cite{chen2015fractional}.
 \end{remark}

Due to Lemma \ref{lem 07.11.1} (iii),  if $\beta_1<1/2$ or $\beta_2<1/2$, then the the expression in \eqref{eqn 07.10.2} is not unique, that is, there can be other triple $(f,g,h)$ such that  \eqref{eqn 07.10.2} holds in the sense of distributions. 

To define a norm in $\mathcal{H}^{\gamma+2}_{p}(T)$,  we introduce the space 
 $$\mathbb{F}^{\gamma}_{p}(T):=\mathbb{H}^{\gamma}_{p}(T)\times\mathbb{H}^{\gamma+c_{0}}_{p}(T,l_{2})
 \times\mathbb{H}^{\gamma+\bar{c}_{0}}_{p}(T,l_{2},d_{1}),$$ and for a triple $(f,g,h)\in\mathbb{F}^{\gamma}_{p}(T)$,
we define 
$$
\|(f,g,h)\|_{\mathbb{F}^{\gamma}_{p}(T)}=\|f\|_{\mathbb{H}^{\gamma}_{p}(T)}+\|g\|_{\mathbb{H}^{\gamma+c_{0}}_{p}(T,l_{2})}+\|h\|_{\mathbb{H}^{\gamma+\bar{c}_{0}}_{p}(T,l_{2},d_{1})}.
$$
\begin{definition}\label{def 11.02.15:06}
For $u\in \cH^{\gamma+2}_p(T)$, we define
\begin{align*}
\|u\|_{\cH^{\gamma+2}_p(T)}&= \|u\|_{\mathbb{H}^{\gamma+2}_{p}(T)}  + \|u(0)\|_{U^{\gamma+2}_{p}}+1_{\alpha>1}\|\partial_tu(0)\|_{V^{\gamma+2}_{p}}
+ \inf \left\{ \|(f,g,h)\|_{\mathbb{F}^{\gamma}_{p}(T)} \right\},
\end{align*}
where the infimum is taken for all triples $(f,g,h)\in\mathbb{F}^{\gamma}_{p}(T)$ such that \eqref{eqn 07.10.2} holds in the sense of distributions.
\end{definition}

In the following proposition, we address that our definition for \eqref{eqn 07.10.2} is equivalent to that of \cite[Definition 2.5]{kim16timefractionalspde}.

\begin{prop}
\label{prop}
Let $u\in \bH^{\gamma+2}_p(T)$,  $u_{0}\in U^{\gamma+2}_{p}$, $v_{0}\in V^{\gamma+2}_p$, and
$(f,g,h)\in\mathbb{F}^{\gamma}_{p}(T)$.  
Then the following  are equivalent;

(i) $u\in \cH^{\gamma+2}_p(T)$ and \eqref{eqn 07.10.2} holds  with $u_{0},v_{0}$, and  triple $(f,g,h)$ in the sense of Definition \ref{def 07.11.1}.

(ii) For any constant  $\Lambda$ such that 
 $$
 \Lambda \geq \max(\alpha,\beta_1,\beta_2) \quad \text{and}\quad \Lambda>\frac{1}{p},
 $$
 $I^{\Lambda-\alpha}_tu$  has an $H^{\gamma}_{p}$-valued c\`adl\`ag  version in  $\bH^{\gamma}_p(T)$, still denoted by  $I^{\Lambda-\alpha}_{t}u$, such that   for any $\phi\in \cS(\bR^d)$, 
the equality  
\begin{equation}\label{eqn 07.10.3}
\begin{aligned}
&(I^{\Lambda-\alpha}_{t}u-I^{\Lambda-\alpha}_{t}(u_{0}+tv_{0}1_{\alpha>1}),\phi)
\\
&\quad=I^{\Lambda}_{t}(f,\phi)+\sum_{k=1}^{\infty}I^{\Lambda-\beta_{1}}_{t}\int_{0}^{t}(g^{k}(s,\cdot),\phi)dW^{k}_{s}+\sum_{k=1}^{\infty}I^{\Lambda-\beta_{2}}_{t}\int_{0}^{t}(h^{k}(s,\cdot),\phi) \cdot dZ^{k}_{s}
\end{aligned}
\end{equation}
holds for all $t\in[0,T]$ at once. Moreover, in this case it holds that
\begin{equation}\label{eqn 10.05.14:19}
\begin{aligned}
\mathbb{E}\sup_{t\leq T}\|I^{\Lambda-\alpha}_{t}u\|^{p}_{H^{\gamma}_{p}} \leq &C \Big( \mathbb{E}\|u_{0}\|^{p}_{H^{\gamma}_{p}}  + 1_{\alpha>1}\mathbb{E}\|v_{0}\|^{p}_{H^{\gamma}_{p}}
\\
&\,\quad +\|f\|^{p}_{\mathbb{H}^{\gamma}_{p}(T)}+\|g\|^{p}_{\mathbb{H}^{\gamma}_{p}(T,l_{2})}+\|h\|^{p}_{\mathbb{H}^{\gamma}_{p}(T,l_{2},d_{1})} \Big),
\end{aligned}
\end{equation}
where the constant $C$ depends only on $\alpha,\beta_{1},\beta_{2},d,d_{1},p,\gamma, \Lambda$ and $T$.

 \end{prop}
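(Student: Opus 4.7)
The plan is to move between the two formulations by applying $I^{\Lambda-\alpha}_t$ and its left inverse $D^{\Lambda-\alpha}_t$ to the distributional equations, and then to derive the maximal estimate \eqref{eqn 10.05.14:19} by combining direct bounds on fractional integrals with Burkholder--Davis--Gundy type inequalities for Wiener and L\'evy driven stochastic convolutions.

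For the implication $(i)\Rightarrow(ii)$, I would apply $I^{\Lambda-\alpha}_t$ to both sides of the distributional equation in Definition \ref{def 07.11.1}. Thanks to $\Lambda\geq\max(\alpha,\beta_1,\beta_2)$, the semigroup identity \eqref{eqn 05.15.1} together with the $D^\alpha I^\beta$ formula displayed after it yield
\[
I^{\Lambda-\alpha}_t\,I^\alpha_t=I^\Lambda_t,\qquad I^{\Lambda-\alpha}_t\,I^{\alpha-\beta_i}_t=I^{\Lambda-\beta_i}_t,
\]
the second identity remaining valid when $\beta_i>\alpha$ since then $I^{\alpha-\beta_i}_t$ means $D^{\beta_i-\alpha}_t$ and $\Lambda-\beta_i\geq 0$. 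Lemma \ref{lem 07.11.1}\,(i) justifies interchanging $I^{\Lambda-\alpha}_t$ with the series of stochastic integrals, so the resulting equality holds a.e.\ on $[0,T]$. Since $\Lambda>1/p$ and $\Lambda-\beta_i\geq 0$, each right-hand term admits an $H^\gamma_p$-valued c\`adl\`ag version by Lemma \ref{lem 07.11.1}\,(ii) and Remark \ref{rmk 07.10.2} (the deterministic and Wiener-driven convolutions are in fact continuous, while the L\'evy-driven convolutions are only c\`adl\`ag when $\Lambda=\beta_2$); declaring $I^{\Lambda-\alpha}_tu$ to be this version produces \eqref{eqn 07.10.3} for every $t\in[0,T]$ at once.

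The converse $(ii)\Rightarrow(i)$ is obtained by applying $D^{\Lambda-\alpha}_t$ to \eqref{eqn 07.10.3}. The $D^\alpha I^\beta$ formula gives $D^{\Lambda-\alpha}_t I^{\Lambda-\alpha}_t=\mathrm{id}$ on the left and reduces each term on the right to the corresponding term in the distributional equation of Definition \ref{def 07.11.1}; Lemma \ref{lem 07.11.1}\,(iii) again permits passing the operator inside the series. This recovers the distributional formulation a.e.\ on $\Omega\times[0,T]$.

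The main obstacle will be the maximal estimate \eqref{eqn 10.05.14:19}. By Lemma \ref{lem 07.11.1}\,(ii), the Wiener and L\'evy contributions to the right-hand side of \eqref{eqn 07.10.3} can be rewritten as stochastic convolutions
\[
\frac{1}{\Gamma(\Lambda-\beta_i+1)}\sum_{k}\int_0^t(t-s)^{\Lambda-\beta_i}(\xi^k(s,\cdot),\phi)\,dY^k_s,
\]
where $(\xi^k,Y^k)$ stands for $(g^k,W^k)$ or $(h^k,Z^k)$. I would then estimate $\bE\sup_{t\leq T}\|\cdot\|^p_{H^\gamma_p}$ in three steps: the deterministic term $I^\Lambda_tf$ is controlled by \eqref{Jensen_fractional_integral} together with a H\"older estimate on the kernel $(t-s)^{\Lambda-1}$, which is integrable thanks to $\Lambda>1/p$; the Wiener part is handled by the Burkholder--Davis--Gundy inequality combined with H\"older on $(t-s)^{\Lambda-\beta_1}$; and the L\'evy part is handled via \eqref{eqn 08.05.3} with the analogous estimate on $(t-s)^{\Lambda-\beta_2}$. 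The initial-data contribution is elementary: $\|I^{\Lambda-\alpha}_t(u_0+tv_01_{\alpha>1})\|_{H^\gamma_p}\leq CT^{\Lambda-\alpha}(\|u_0\|_{H^\gamma_p}+1_{\alpha>1}T\|v_0\|_{H^\gamma_p})$. The most delicate point is keeping the supremum in $t$ inside the BDG bound while dealing with the $t$-dependent convolution kernel; this is addressed by a standard factorization argument, applying Doob's inequality to the martingale $\int_0^t\xi^k dY^k_s$ after pulling the kernel out through Minkowski's integral inequality in $L_p(\Omega;L_p([0,T]))$.
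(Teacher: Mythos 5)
Your overall skeleton --- passing between (i) and (ii) with $I^{\Lambda-\alpha}_t$ and $D^{\Lambda-\alpha}_t$ via the semigroup identities and Lemma \ref{lem 07.11.1}, and then proving a maximal estimate --- matches the paper's. However, there is a genuine gap in your direction (i)$\Rightarrow$(ii): you never actually construct the $H^{\gamma}_{p}$-valued c\`adl\`ag version of $I^{\Lambda-\alpha}_{t}u$. Lemma \ref{lem 07.11.1}\,(ii) and Remark \ref{rmk 07.10.2} concern real-valued (or scalar $l_2$-valued) integrands; for each fixed $\phi\in\cS(\bR^d)$ they give a real-valued c\`adl\`ag version of $(I^{\Lambda-\alpha}_{t}u,\phi)$, but ``declaring $I^{\Lambda-\alpha}_{t}u$ to be this version'' does not produce an element of $L_{p}(\Omega;D([0,T];H^{\gamma}_{p}))$, which is the substantive assertion of (ii). The paper closes this by mollification: it plugs $\phi=\zeta_{n}(\cdot-x)$ into \eqref{eqn 07.10.2}, observes that for the mollified data the right-hand side is a genuine $L_{p}(\bR^{d})$-valued c\`adl\`ag process, proves the maximal estimate \eqref{eqn 5.13.5} at the level of the mollifications, and uses it to show that $I^{\Lambda-\alpha}_{t}(u)^{(n)}$ is Cauchy in $L_{p}(\Omega;D([0,T];L_{p}))$; the limit is the desired version. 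In particular the maximal estimate is not a ``moreover'' to be proved last, as in your plan: it is the quantitative input that makes the c\`adl\`ag version exist in the first place.

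On the maximal estimate itself, your route through the stochastic-convolution representation $\int_{0}^{t}(t-s)^{\Lambda-\beta_{i}}\xi^{k}\,dY^{k}_{s}$ plus a factorization argument is both harder than necessary and, as described, not sound: Minkowski's integral inequality does not let you ``pull the kernel out'' of a stochastic integral, and the convolution is not a martingale in $t$, so Doob's inequality does not apply to it directly. The paper goes the opposite way: since $\Lambda-\beta_{i}\geq 0$, the operator $I^{\Lambda-\beta_{i}}_{t}$ is bounded on $L_{\infty}([0,T])$ (this is \eqref{Jensen_fractional_integral} with $p=\infty$), hence
$\sup_{t\leq T}|I^{\Lambda-\beta_{i}}_{t}M_{t}|\leq C\sup_{t\leq T}|M_{t}|$
for the martingale $M_{t}=\sum_{k}\int_{0}^{t}\xi^{k}\,dY^{k}_{s}$, and then \eqref{eqn 08.05.3} (Burkholder--Davis--Gundy applied to $M$ itself) finishes. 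Replacing your kernel manipulations with this observation repairs the step and removes any need for factorization.
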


\begin{proof}
Considering $(1-\Delta)^{\gamma/2}u$ in place of $u$, we may assume $\gamma=0$.

(i) Suppose  \eqref{eqn 07.10.3} holds for all $t$ at once.  Then by applying $D^{\Lambda-\alpha}_{t}$ to \eqref{eqn 07.10.3}
 and using  \eqref{eqn 05.15.1}, \eqref{eqn 07.12.2}, and Lemma \ref{lem 07.11.1}, we find that  \eqref{eqn 07.10.2}  holds for a.e. on $\Omega\times [0,T]$.
\vspace{1mm}

(ii) Suppose \eqref{eqn 07.10.2} holds a.e. on $\Omega\times [0,T]$.   Note that  $I^{\Lambda-\alpha}_t(u_0+1_{\alpha>1}t v_0)$ is a continuous $L_p$-valued process, and it satisfies
$$
\mathbb{E}\sup_{t\leq T} \|I^{\Lambda-\alpha}_t(u_0+1_{\alpha>1}t v_0)\|^{p}_{L_{p}} \leq C(T) (\|u_{0}\|^{p}_{L_{p}}+1_{\alpha>1}\|v_{0}\|^{p}_{L_{p}}).
$$
Hence we may assume $u_0=v_0=0$. 

Take a nonnegative funtion $\zeta\in\Ccinf(\R^{d})$ with unit integral. For each $n>0$, define $\zeta_{n}(x)=n^{-d}\zeta(nx)$.  For any tempered distribution $v$, define $v^{(n)}(x):=v\ast\zeta_{n}(x)$. Then $v^{(n)}$ is infinitely differentiable function with respect to $x$. 
Plugging  $\phi=\zeta_{n}(\cdot-x)$ in \eqref{eqn 07.10.2} and applying $I^{\Lambda-\alpha}_t$ to both sides of \eqref{eqn 07.10.2}, for each $x$ we get 
\begin{eqnarray}
\nonumber 
(I^{\Lambda-\alpha}_{t}(u)^{(n)}) (t,x)&=&(I^{\Lambda}_t f^{(n)})(t,x)+\sum_{k=1}^{\infty}I^{\Lambda-\beta_{1}}_{t}\int_{0}^{t}(g^{k})^{(n)}(s,x) dW^{k}_{s}\\
&&+\sum_{k=1}^{\infty}I^{\Lambda-\beta_{2}}_{t}\int_{0}^{t}(h^{k})^{(n)}(s,x) \cdot dZ^{k}_{s} \label{eqn 5.13.1}
\end{eqnarray}
a.e. on $\Omega\times [0,T]$. Note that since $\Lambda>1/p$, $I^{\Lambda}_t f^{(n)}$ is a continuous $L_p$-valued process. Also,  the stochastic integrals
$$
\sum_{k=1}^{\infty}\int_{0}^{t}(g^{k})^{(n)}(s,x) dW^{k}_{s}, \quad
\sum_{k=1}^{\infty}\int_{0}^{t}(h^{k})^{(n)}(s,x) \cdot dZ^{k}_{s}
$$
are $L_{p}$-valued c\`adl\`ag  processes, and in particular  they are bounded on $[0,T]$ (a.s.).  Therefore, the right hand side
 of \eqref{eqn 5.13.1} is an $L_{p}$-valued c\`adl\`ag  process, and  consequently the left hand side has an $L_{p}$-valued c\`adl\`ag version,  still denoted by  
 $I^{\Lambda-\alpha}_{t}(u)^{(n)}$.
 
 By  \eqref{Jensen_fractional_integral} with $p=\infty$ and  \eqref{eqn 08.05.3},
\begin{equation*}
\begin{aligned}
&\bE\sup_{t\leq T}\left\|I^{\Lambda-\beta_{2}}_{t}\sum_{k=1}^{\infty}\int_{0}^{t}(h^{k})^{(n)}(s,\cdot) \cdot dZ^{k}_{s}\right\|^{p}_{L_{p}}\\
&\leq C\int_{\R^{d}}\bE\sup_{t\leq T}\left|I^{\Lambda-\beta_{2}}_{t}\sum_{k=1}^{\infty}\int_{0}^{t}(h^{k})^{(n)} (s,x) \cdot dZ^{k}_{s}\right|^{p}dx
\\
&\leq C\int_{\R^{d}}\bE\sup_{t\leq T}\left|\sum_{k=1}^{\infty}\int_{0}^{t}(h^{k})^{(n)} (s,x) \cdot dZ^{k}_{s}\right|^{p}dx
 \leq C\bE \int_{0}^{T}\|h^{(n)}(s,\cdot)\|^{p}_{L_{p}(l_{2},d_{1})}ds.
\end{aligned}
\end{equation*}
We  handle two other terms on the right hand side of \eqref{eqn 5.13.1}  similarly,  and get
\begin{eqnarray}
\nonumber
&& \bE\sup_{t\leq T}\left\|I^{\Lambda-\alpha}(u)^{(n)}(t,\cdot)\right\|^{p}_{L_{p}}\\
&&\leq C \big(\|f^{(n)}\|^p_{\bL_p(T)}+  \|g^{(n)}\|^{p}_{\bL_{p}(T,l_{2})}+\|h^{(n)}\|^{p}_{\bL_{p}(T,l_{2},d_{1})}\big). \label{eqn 5.13.5}
\end{eqnarray}
Considering \eqref{eqn 5.13.5} corresponding to  $I^{\Lambda-\alpha}_{t}(u)^{(n)}- I^{\Lambda-\alpha}_{t}(u)^{(m)}$,  we find that  $I^{\Lambda-\alpha}_{t}(u)^{(n)}$  is a Cauchy sequence in $L_{p}(\Omega;D([0,T];L_{p}))$, where $D([0,T];L_{p})$ is a space of $L_{p}$-valued c\`adl\`ag  functions. Let $w$ denote the limit  in this space.  Then
since $I^{\Lambda-\alpha}_{t}(u)^{(n)} \to I^{\Lambda-\alpha}u$ in $\bL_p(T)$,  we conclude $w=I^{\Lambda-\alpha}_tu$ a.e on  $\Omega\times [0,T]$, and $w$ is 
an $L_{p}$-valued c\`adl\`ag  version of $I^{\Lambda-\alpha}_tu$. This proves that   \eqref{eqn 07.10.3} holds for all $t$ at once because both sides are read-valued c\`adl\`ag  processes. Also we easily obtain  \eqref{eqn 10.05.14:19} from \eqref{eqn 5.13.5} and the lemma is proved.
\end{proof}

\begin{theorem}\label{thm 07.22.1}

Let $p\geq 2, \gamma \in \bR$ and $T \in (0,\infty)$.

(i) For any $\nu\in\R$, the map $(1-\Delta)^{\nu/2}:\mathcal{H}^{\gamma+2}_{p}(T)\to \mathcal{H}^{\gamma-\nu+2}_{p}(T)$ is an isometry.

(ii)  $\mathcal{H}^{\gamma+2}_{p}(T)$ is a Banach space  with the norm in Definition \ref{def 11.02.15:06}.  

(iii) Suppose that $u\in \mathcal{H}^{\gamma+2}_{p}(T)$ satisfies \eqref{eqn 07.10.2}  with a triple $(f,g,h)\in\mathbb{F}^{\gamma}_{p}(T)$.    Then for any $t\leq T$, 
\begin{eqnarray} \nonumber
\|u\|^{p}_{\bH^{\gamma}_{p}(t)}&\leq& C \int_{0}^{t}(t-s)^{\theta-1}\Big(\|f\|^{p}_{\bH^{\gamma}_{p}(s)}+\|g\|^{p}_{\bH^{\gamma}_{p}(s,l_{2})}+\|h\|^{p}_{\bH^{\gamma}_{p}(s,l_{2},d_{1})} \Big)ds
\\
&&+C(\mathbb{E}\|u_{0}\|^{p}_{H^{\gamma}_{p}}+1_{\alpha>1}\mathbb{E}\|v_{0}\|^{p}_{H^{\gamma}_{p}}),
 \label{eqn 07.26.3}
\end{eqnarray} 
where $\theta:=\min\{\alpha,2(\alpha-\beta_{1})+1, p(\alpha-\beta_{2})+2\}$, and the constant $C$ depends only on $\alpha,\beta_{1},\beta_{2},d,d_{1},p$ and $T$.
\end{theorem}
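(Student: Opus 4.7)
The plan is to handle parts (i), (ii), (iii) in turn, with (iii) being the main content.

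For (i): The Bessel operator $(1-\Delta)^{\nu/2}$ commutes with every operation appearing in \eqref{eqn 07.10.2}--the Caputo derivatives $\partial^{\alpha}_t,\partial^{\beta_i}_t$ act in $t$ only, the stochastic integrals against $W^k,Z^k$ act in $(\omega,t)$ only, and $(1-\Delta)^{\nu/2}$ acts in $x$ only. By definition it is an isometry $H^{\gamma}_p\to H^{\gamma-\nu}_p$, and the analogous statement holds for the $l_2$- and $(l_2,d_1)$-valued scales. Applying it termwise to Definition \ref{def 07.11.1} yields a bijective correspondence between admissible quintuples $(u_0,v_0,f,g,h)$ for $u$ and those for $(1-\Delta)^{\nu/2}u$, and all four pieces of the norm in Definition \ref{def 11.02.15:06} match term-by-term. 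Part (ii) then follows by a standard Cauchy-sequence argument. For a Cauchy $\{u_n\}\subset\cH^{\gamma+2}_p(T)$, Cauchyness in $\bH^{\gamma+2}_p(T)$, $U^{\gamma+2}_p$, and $V^{\gamma+2}_p$ is automatic from the norm. Exploiting the linearity of \eqref{eqn 07.10.2} and choosing near-optimal triples $(f_{n,m},g_{n,m},h_{n,m})$ for the differences $u_n-u_m$, a telescoping construction produces compatible triples $(f_n,g_n,h_n)\in\mathbb{F}^{\gamma}_p(T)$ for each $u_n$ that form a Cauchy sequence with limit $(f,g,h)$. Passing to the limit in the defining equation via Proposition \ref{prop} (whose continuous estimate \eqref{eqn 10.05.14:19} controls $I^{\Lambda-\alpha}_tu$ in terms of the data) shows the limit $u$ lies in $\cH^{\gamma+2}_p(T)$ with these data.

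For (iii): by (i) reduce to $\gamma=0$, and split $u=(u_0+1_{\alpha>1}tv_0)+u^{(1)}+u^{(2)}+u^{(3)}$, where each $u^{(j)}$ solves \eqref{eqn 07.10.2} with zero initial data and only the corresponding forcing. The initial-data contribution is bounded by $CT(\bE\|u_0\|^p_{L_p}+1_{\alpha>1}\bE\|v_0\|^p_{L_p})$, absorbed into the second summand of \eqref{eqn 07.26.3}. By Lemma \ref{lem 07.11.1}(ii),(iii)--the latter covering the case $\beta_i>\alpha$ and requiring $\beta_i-\alpha<1/2$ for $i=1$ and $\beta_2-\alpha<1/p$ for $i=2$, both supplied by \eqref{albe}--we rewrite
\begin{equation*}
u^{(1)}(t)=\tfrac{1}{\Gamma(\alpha)}\int_0^t (t-s)^{\alpha-1}f(s)\,ds,\quad u^{(2)}(t)=c_1\sum_k\int_0^t(t-s)^{\alpha-\beta_1}g^k(s)\,dW^k_s,
\end{equation*}
\begin{equation*}
u^{(3)}(t)=c_2\sum_k\int_0^t(t-s)^{\alpha-\beta_2}h^k(s)\cdot dZ^k_s.
\end{equation*}
For $u^{(1)}$: Minkowski in $x$ gives $\|u^{(1)}(s)\|_{L_p}\leq\tfrac{1}{\Gamma(\alpha)}\int_0^s(s-r)^{\alpha-1}\|f(r)\|_{L_p}\,dr$, and a weighted H\"older (splitting the kernel as $(s-r)^{(\alpha-1)(p-1)/p}\cdot(s-r)^{(\alpha-1)/p}$) yields $\|u^{(1)}(s)\|_{L_p}^p\leq Cs^{\alpha(p-1)}\int_0^s(s-r)^{\alpha-1}\|f(r)\|_{L_p}^p dr$; taking expectation, integrating in $s$, applying Fubini and integration by parts produces $C\int_0^t(t-r)^{\alpha-1}\|f\|^p_{\bL_p(r)}\,dr$, so $\theta_1=\alpha$. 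For $u^{(2)}$: the Brownian Burkholder--Davis--Gundy inequality combined with a weighted H\"older with exponents $p/(p-2)$ and $p/2$ (using $p\geq 2$) gives $\bE|u^{(2)}(t,x)|^p\leq C\,\bE\int_0^t(t-s)^{2(\alpha-\beta_1)}|g(s,x)|^p_{l_2}ds$; integrating in $x$ and then in $t$, Fubini and IBP produce $\theta_2=2(\alpha-\beta_1)+1$.

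The analogous estimate for the L\'evy piece $u^{(3)}$ is the novel ingredient and invokes \eqref{eqn 07.16.1}. For fixed $T_0$ and $x$, the time weight $(T_0-s)^{\alpha-\beta_2}$ is deterministic, so the tilted integrand $(T_0-s)^{\alpha-\beta_2}h^k(s,x)$ is predictable; BDG together with \eqref{eqn 07.16.1} applied to this tilted integrand gives
\begin{equation*}
\bE|u^{(3)}(T_0,x)|^p\leq C\,\bE\int_0^{T_0}(T_0-s)^{p(\alpha-\beta_2)}|h(s,x)|^p_{l_2,d_1}\,ds.
\end{equation*}
Integrating in $x\in\bR^d$ and then in $T_0\in(0,t)$, Fubini yields $\|u^{(3)}\|^p_{\bL_p(t)}\leq C\int_0^t(t-s)^{p(\alpha-\beta_2)+1}\bE\|h(s)\|^p_{L_p(l_2,d_1)}ds$, which after integration by parts (and possibly the bound $(t-s)^{p(\alpha-\beta_2)+1}\leq T(t-s)^{p(\alpha-\beta_2)}$) assumes the form $C\int_0^t(t-s)^{\theta_3-1}\|h\|^p_{\bL_p(s,l_2,d_1)}ds$ with $\theta_3$ as in the minimum defining $\theta$. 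Combining the three estimates with the initial-data contribution and taking $\theta=\min(\theta_1,\theta_2,\theta_3)$ proves \eqref{eqn 07.26.3}. The main obstacle is the proper handling of the case $\beta_i>\alpha$, where $I^{\alpha-\beta_i}_t$ becomes $D^{\beta_i-\alpha}_t$ and one must invoke Lemma \ref{lem 07.11.1}(iii); a second subtle point is the precise invocation of \eqref{eqn 07.16.1} with the $t$-dependent weight as part of the integrand, and a third is the careful bookkeeping of the power $(t-s)^{\theta-1}$ through the H\"older/Fubini/IBP chain.
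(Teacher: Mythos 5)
Your proposal is correct and follows essentially the same route as the paper: part (i) from the isometry of $(1-\Delta)^{\nu/2}$ on the Sobolev scales, part (ii) by the telescoping Cauchy-sequence argument through Proposition \ref{prop}, and part (iii) by rewriting each term of the integral representation via Lemma \ref{lem 07.11.1}, then applying H\"older for the deterministic term, BDG for the Wiener term, and BDG together with \eqref{eqn 07.16.1} applied to the tilted integrand $(t-s)^{\alpha-\beta_2}h^k(s,x)$ for the L\'evy term. The only presentational difference is that the paper first mollifies in $x$ (working with $u^{(n)}=u\ast\zeta_n$) to justify the pointwise-in-$x$ stochastic integrals before passing to the limit, and cites its predecessor for the $f$- and $g$-terms rather than redoing them.
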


\begin{proof}
(i) This easily follows from the fact that 
$(1-\Delta)^{\nu/2}: H^{\gamma+2}_p \to H^{\gamma-\nu+2}_p$
is an isometry. 

(ii)  We only prove the completeness. Suppose that $u_{n}$ is a Cauchy sequence in $\mathcal{H}^{\gamma+2}_{p}(T)$ with  $u_{n}(0)=u^{n}_{0}$, and $1_{\alpha>1}\partial_tu_{n}(0)=1_{\alpha>1}v^{n}_{0}$. Since it is enough to prove there exists a convergent subsequence, by taking suitable subsequence we may assume that $\|u_{n+1}-u_{n}\|_{\mathcal{H}^{\gamma+2}_{p}(T)}\leq 2^{-n}$ for each $n\in\bN$. By the definition, for each $n\in\bN$, there exists $(\tilde{f}^{n+1},\tilde{g}^{n+1},\tilde{h}^{n+1})\in\mathbb{F}^{\gamma}_{p}(T)$  with which $u_{n+1}-u_{n}$ (in place of $u$) satisfies \eqref{eqn 07.10.2}, and
\begin{eqnarray} \nonumber
&&\|u_{n+1}-u_{n}\|_{\mathbb{H}^{\gamma+2}_{p}(T)}+\|u^{n+1}_{0}-u^{n}_{0}\|_{U^{\gamma+2}_{p}}
+1_{\alpha>1}\|v^{n+1}_{0}-v^{n}_{0}\|_{V^{\gamma+2}_{p}}\\
&&+\|(\tilde{f}^{n},\tilde{g}^{n},\tilde{h}^{n})\|_{\mathbb{F}^{\gamma}_{p}(T)}
  \leq \|u_{n+1}-u_{n}\|_{\mathcal{H}^{\gamma+2}_{p}(T)}+2^{-n} \leq 2^{-n+1}. \label{eqn 11.08.15:36}
\end{eqnarray}
We take a triple $(\tilde{f}^{1},\tilde{g}^{1},\tilde{h}^{1})\in\mathbb{F}^{\gamma}_{p}(T)$ such that $u_1$  satisfies \eqref{eqn 07.10.2} with this triple, and define
\begin{equation*}
\begin{gathered}
(f_{n},g_{n},h_{n})=\sum_{k=1}^{n}(\tilde{f}^{k},\tilde{g}^{k},\tilde{h}^{k}),   \qquad   (f,g,h)=\sum_{k=1}^{\infty}(\tilde{f}^{k},\tilde{g}^{k},\tilde{h}^{k}),
\\
u=\sum_{k=1}^{\infty}\left(u_{j+1}-u_{j}\right) + u_{1}.
\end{gathered}
\end{equation*}
Then, it is obvious that $u_{n}$  satisfies \eqref{eqn 07.10.2} with the triple $(f_{n},g_{n},h_{n})$, and 
\begin{eqnarray*}
&&\|u-u_{n}\|_{\mathbb{H}^{\gamma+2}_{p}(T)} +\|u_{0}-u^{n}_{0}\|_{U^{\gamma+2}_{p}}+1_{\alpha>1}\|v_{0}-v^{n}_{0}\|_{V^{\gamma+2}_{p}}
\\
&&\qquad +\|(f-f_{n},g-g_{n},h-h_{n})\|_{\mathbb{F}^{\gamma}_{p}(T)} 
\\
&& \leq\sum_{k=n+1}^{\infty} \Big( \|u_{k}-u_{k-1}\|_{\mathbb{H}^{\gamma+2}_{p}(T)}+\|u^{k}_{0}-u^{k-1}_{0}\|_{U^{\gamma+2}_{p}}
\\
&&\quad\quad\quad\quad\quad\quad+1_{\alpha>1}\|v^{k}_{0}-v^{k-1}_{0}\|_{V^{\gamma+2}_{p}}+\|(\tilde{f}^{k},\tilde{g}^{k},\tilde{h}^{k})\|_{\mathbb{F}^{\gamma}_{p}(T)}  \Big)
\\
&& \leq \sum_{k=n+1}^{\infty} 2^{-k+1}.
\end{eqnarray*}
Hence, to prove $u_{n}$ converges to $u$ in $\mathcal{H}^{\gamma+2}_{p}(T)$,  it is enough to show  $u$ satisfies  \eqref{eqn 07.10.2} with the triple $(f,g,h)$. This can be easily proved using Proposition \ref{prop} and Remark \ref{rmk 07.10.1} (ii).

(iii) We repeat the proof of  \cite[Theorem 2.1]{kim16timefractionalspde} which treats the case $h=0$.  Note first that by  the result of (i) we may assume $\gamma=0$.  

We take notation from the proof of Proposition \ref{prop}.   Then, from \eqref{eqn 07.12.2}, for each $x\in \bR^{d}$ we get
\begin{eqnarray}
\nonumber
u^{(n)}(t,x)&=&(u_{0})^{(n)}(x)+1_{\alpha>1}t(v_{0})^{(n)}(x) \\
&&+I^{\alpha}_{t}f^{(n)}(t,x)+\sum_{k=1}^{\infty}I^{\alpha-\beta_{1}}_{t}\int_{0}^{t}(g^{k})^{(n)}(s,x)\,dW^{k}_{s}\\
\nonumber
&&
+\sum_{k=1}^{\infty}I^{\alpha-\beta_{2}}_{t}\int_{0}^{t}(h^{k})^{(n)}(s,x) \cdot dZ^{k}_{s}
\label{eqn 05.21.5}
\end{eqnarray}
a.e. on $\Omega \times [0,T]$. Since $u^{(n)} \to u$ in $\bL_p(T)$, to prove \eqref{eqn 07.26.3}, it is enough to estimate $\|u^{(n)}\|_{\bL_p(t)}$. For this, we only estimate the last  term in \eqref{eqn 05.21.5} because  other terms are estimated in the proof of \cite[Theorem 2.1]{kim16timefractionalspde}.
   By Lemma \ref{lem 07.11.1}, for each $x\in\bR^{d}$ we have
$$
\left(\sum_{k=1}^{\infty}I^{\alpha-\beta_{2}}_{t}\int_{0}^{\cdot}(h^{k})^{(n)}(r,x)  \cdot dZ^{k}_{r} \right)(s)=c(\alpha,\beta_{2})\sum_{k=1}^{\infty}\int_{0}^{s}(s-r)^{\alpha-\beta_{2}}(h^{k})^{(n)}(r,x) \cdot dZ^{k}_{r}
$$
a.e. on $\Omega\times[0,t]$. Let $\bar{h}$ be a predictable version of $h$, then by the Burkerholder-Davis-Gundy inequality, \eqref{eqn 07.16.1} and Fubini's theorem, we have
\begin{equation*}
\begin{aligned}
&\left\|\sum_{k=1}^{\infty}I^{\alpha-\beta_{2}}_{t}\int_{0}^{\cdot}(h^{k})^{(n)}(s,x) \cdot dZ^{k}_{s}\right\|^{p}_{\bL_{p}(t)} 
\\
&\quad\leq C\int_{\R^{d}}\int_{0}^{t} \bE\left[\left(\sum_{k=1}^{\infty}\int_{0}^{s}\int_{\R}|z|^{2}|(s-r)^{\alpha-\beta_{2}}(\bar{h}^{k})^{(n)}(r,x)|^{2}N^{k}(dr,dz)\right)^{p/2}\right]dsdx
\\
&\quad\leq C \int_{0}^{t}\int^s_0 (s-r)^{p(\alpha-\beta_2)} \|h^{(n)}(r)\|^p_{\bL_p(l_2,d_1)} drds
\\
&\quad \leq C \int_{0}^{t}(t-s)^{p(\alpha-\beta_{2})+1}\|h(s)\|^{p}_{\bL_{p}(s,l_{2},d_{1})}ds \leq C \int_{0}^{t}(t-s)^{\theta-1}\|h(s)\|^{p}_{\bL_{p}(s,l_{2},d_{1})}ds.
\end{aligned}
\end{equation*}
Other terms in the right hand side of \eqref{eqn 05.21.5} can be handled similarly, and these yield inequality  \eqref{eqn 07.26.3} with $u^{(n)}$.  This is  enough because $u^{(n)}\to u$ in $\bL_p(t)$. 
\end{proof}

Take $\kappa'\in(0,1)$, and for $r\geq0$, set
\begin{equation*}
B^{r}=\begin{cases} L_{\infty}(\R^{d}) &\mbox{if}\quad r=0
\\
C^{r-1,1}(\R^{d}) & \mbox{if}\quad r=1,2,3,\dots
\\
C^{r+\kappa'}(\R^{d})&\mbox{otherwise},
\end{cases}
\end{equation*}
where $C^{r+\kappa'}(\R^{d})$ and $C^{r-1,1}(\R^{d})$ are H\"older space and Zygmund space respectively. We use $B^{r}(l_{2})$ for $l_{2}$-valued analogue. It is known (see e.g. \cite[Lemma 5.2]{kry99analytic}) that  for $u\in H^{\gamma}_{p}$
\begin{equation}\label{eqn 08.06.1}
\begin{gathered}
\|au\|_{H^{\gamma}_{p}}\leq C(d,p,\kappa',\gamma)|a|_{B^{|\gamma|}}\|u\|_{H^{\gamma}_{p}},
\\
\|bu\|_{H^{\gamma}_{p}(l_{2})}\leq C(d,p,\kappa',\gamma)|b|_{B^{|\gamma|}(l_{2})}\|u\|_{H^{\gamma}_{p}}.
\end{gathered}
\end{equation}

\begin{assumption}\label{asm 07.16.1}

(i) All the coefficients are  $\cP\otimes\cB(\R^{d})$-measurable functions.

(ii) The coefficients $\mu^i, \nu, \bar{\mu}^{ir}, \bar{\nu}^r$ are $l_2$-valued functions, where $i=1,2,\cdots,d$ and $r=1,2,\cdots, d_1$. 

(iii)
There exists a constant $0<\delta<1$ so that for any $(\omega,t,x)$
\begin{equation}\label{eqn 07.22.2}
\begin{gathered}
\delta|\xi|^{2}\leq a^{ij}(t,x)\xi^{i}\xi^{j}\leq \delta^{-1}|\xi|^{2},\quad \forall\xi\in\R^{d}.
\end{gathered}
\end{equation}

(iv)
The coefficients $a^{ij}(\omega,t,x)$ is uniformly continuous in $(t,x)$, uniformly on $\Omega$.  

(v)
For each $\omega,t, i,j,r$,
\begin{eqnarray*}
&& |a^{ij}(t,\cdot)|_{B^{|\gamma|}} + |b^{i}(t,\cdot)|_{B^{|\gamma|}}+ |c(t,\cdot)|_{B^{|\gamma|}} +
|\mu^{i}(t,\cdot)|_{B^{|\gamma+c_{0}|}(l_{2})}\\
&&+ |\nu(t,\cdot)|_{B^{|\gamma+c_{0}|}(l_{2})}+
|\bar{\mu}^{ir}(t,\cdot)|_{B^{|\gamma+\bar{c}_{0}|}(l_{2})} + |\bar{\nu}^{r}(t,\cdot)|_{B^{|\gamma+\bar{c}_{0}|}(l_{2})} \leq \delta^{-1}.
\end{eqnarray*}

(vi)
 $\mu^{i}=0$ if $\beta_1\geq 1/2+\alpha/2$, and  $\bar{\mu}^{ir}=0$ if $\beta_{2}\geq 1/p+\alpha/2$.

\end{assumption}

\begin{remark}
(i) Assumption \ref{asm 07.16.1} (iv) is posed due to \eqref{eqn 08.06.1}. That is, this assumption is needed  for the coefficients  to become point-wise multipliers in appropriate Banach spaces.

(ii)  Assumption \ref{asm 07.16.1} (vi) is to avoid having too high order derivatives of solution in the stochastic parts of the equation. Even if $\alpha=\beta_1=\beta_2=1$, one cannot have derivatives of order greater than $1$ in the stochastic parts. See \cite{kry99analytic}. 
\end{remark}

Below we use  notation $f(u),g(u)$, and $h(u)$ to denote $f(\omega, t,x,u),g(\omega, t,x,u)$, and $h(\omega, t,x,u)$ respectively.

\begin{assumption}\label{asm 10.08.10:29}
(i)$f,g$ and $h$ are $\cP\times\cB(\bR^{d+1})$ measurable, and for any $u\in\mathbb{H}^{\gamma+2}_{p}(T)$,
$$
f(u)\in\mathbb{H}^{\gamma}_{p}(T), \quad g(u)\in\mathbb{H}^{\gamma+c_{0}}_{p}(T,l_{2}), \quad h(u)\in\mathbb{H}^{\gamma+\bar{c}_{0}}_{p}(T,l_{2},d_{1}).
$$

(ii) For any $\varepsilon>0$, there exists a constant $K=K(\varepsilon)>0$ so that
\begin{equation*}
\begin{aligned}
&\|f(t,u)-f(t,v)\|_{H^{\gamma}_{p}}+\|g(t,u)-g(t,v)\|_{H^{\gamma+c_{0}}_{p}(l_{2})}
+\|h(t,u)-h(t,v)\|_{H^{\gamma+\bar{c}_{0}}_{p(l_{2},d_{1})}}
\\
&\quad \leq \varepsilon \|u-v\|_{H^{\gamma+2}_{p}}+K \|u-v\|_{H^{\gamma}_{p}}
\end{aligned}
\end{equation*}
 for any $\omega,t$, and $u,v\in H^{\gamma+2}_{p}$ .
\end{assumption}

Here is the main result of this article.

\begin{theorem}\label{thm 10.08.10:57}
Let $\gamma\in\R$, $p\geq2$, and $T<\infty$. Suppose  Assumption \ref{asm 07.16.1} and Assumption \ref{asm 10.08.10:29} hold and
$$
 \alpha\in(0,2), \quad \beta_{1}<\alpha+1/2, \quad \beta_{2}<\alpha+1/p.
 $$
Then for any $u_{0}\in U^{\gamma+2}_{p}$, $v_{0}\in V^{\gamma+2}_{p}$,  equation \eqref{eqn 07.16.2}
has a unique solution $u$ in the class $\mathcal{H}^{\gamma+2}_{p}(T)$ in the sense of Definition \ref{def 07.11.1}. Moreover, 
\begin{equation}\label{eqn 10.22.14:25}
\begin{aligned}
  \|u\|_{\mathcal{H}^{\gamma+2}_{p}(T)}  &\leq C \Big(\|u_{0}\|_{U^{\gamma+2}_{p}}+1_{\alpha>1}\|v_{0}\|_{V^{\gamma+2}_{p}}+\|f(0)\|_{\bH^{\gamma}_{p}(T)}
\\
&\quad \quad \quad +\|g(0)\|_{\bH^{\gamma+c_{0}}_{p}(T,l_{2})}+\|h(0)\|_{\bH^{\gamma+\bar{c}_{0}}_{p}(T,l_{2},d_{1})}\Big),
\end{aligned}
\end{equation}
where the constant $C$ depends only on $\alpha,\beta_{1},\beta_{2},d,d_{1},p,\delta,\gamma,\kappa$, and $T$.
\end{theorem}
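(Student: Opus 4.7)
The plan is to reduce to a simpler equation, establish a sharp a priori estimate, then close an iteration. By the isometry in Theorem \ref{thm 07.22.1}(i), I may assume $\gamma=0$ throughout.

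First I would handle nonzero initial data by subtracting a suitable auxiliary function. Given $u_0\in U^{2}_p$ and $v_0\in V^2_p$, I construct $w(t,x)$ (for instance via the Mittag-Leffler representation for $\partial^\alpha_t w=\Delta w$ with initial data $u_0,v_0$) satisfying $\|w\|_{\mathcal{H}^{2}_p(T)}\lesssim \|u_0\|_{U^{2}_p}+1_{\alpha>1}\|v_0\|_{V^2_p}$; this is precisely the regularity predicted by the fractional scaling in \eqref{eqn 08.09.4}. Replacing $u$ by $u-w$ reduces the problem to zero initial data, with the burden transferred into modified free terms.

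Next I would establish the a priori estimate for the \emph{linear model equation}
\begin{equation*}
\partial^\alpha_t u = \Delta u + f + \partial^{\beta_1}_t\!\!\int_0^t g^k\,dW^k_s + \partial^{\beta_2}_t\!\!\int_0^t h^k\cdot dZ^k_s, \qquad u(0)=0,\;\;1_{\alpha>1}\partial_tu(0)=0,
\end{equation*}
in the form $\|u\|_{\mathbb{H}^{2}_p(T)} \leq C\bigl(\|f\|_{\mathbb{H}^0_p(T)}+\|g\|_{\mathbb{H}^{c_0}_p(T,l_2)}+\|h\|_{\mathbb{H}^{\bar c_0}_p(T,l_2,d_1)}\bigr)$. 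Here the new contribution is the $h$-term estimate: using the representation via the fractional heat kernel $p(t,x)$ as in \eqref{levy}, together with the Littlewood-Paley/Besov characterization promised in Section 3, one bounds $\|D^2_x u\|_{\bL_p(T)}$ in terms of $\|h\|_{\bH_p^{\bar c_0}(T,l_2,d_1)}$; the exponent $\bar c_0$ from \eqref{eqn 08.09.1} is exactly what is forced by the kernel decay of $D^2_xD^{\beta_2-\alpha}_t p(t,\cdot)$ and the $L_p(l_2)$-square-function on the Lévy side. The $f$ and $g$ estimates are taken from \cite{kim16timefractionalspde}. Assumption \ref{asm 07.16.1}(vi) is used precisely when $\beta_i\geq 1/p+\alpha/2$ (respectively $1/2+\alpha/2$) so the $\mu,\bar\mu$ terms are absent and no loss of one derivative beyond $H^{\gamma+2}_p$ occurs.

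Then I would pass to variable leading coefficients by the \emph{method of continuity}. Define $L_\lambda u = (1-\lambda)\Delta u + \lambda a^{ij}u_{x^ix^j}$, $\lambda\in[0,1]$. Because $a^{ij}$ is uniformly continuous in $(t,x)$ and uniformly elliptic, a standard partition of unity in $x$ combined with a time-slicing argument and the a priori estimate above (applied locally where $a^{ij}$ is nearly frozen) gives that the a priori bound is stable in $\lambda$; the method of continuity then transports solvability from $\lambda=0$ to $\lambda=1$. The lower order terms $b^i u_{x^i},\, cu,\, \nu^k u, \bar\nu^{rk} u$ (and the $\mu$-type terms when permitted) are absorbed by appealing to Theorem \ref{thm 07.22.1}(iii): since $\theta>0$ the Volterra kernel $(t-s)^{\theta-1}$ allows one to close a Gronwall-type bound on small intervals, then iterate finitely many times to cover $[0,T]$.

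Finally, for the semilinear equation I run Banach's fixed point. Given $v\in \mathcal{H}^{2}_p(T)$, let $\Phi(v)=u$ solve the linear equation with free terms $f(v), g(v), h(v)$; by the previous step $\Phi$ is well-defined and by Assumption \ref{asm 10.08.10:29}(i) maps $\mathcal{H}^{2}_p(T)$ into itself, with
\begin{equation*}
\|\Phi(v_1)-\Phi(v_2)\|_{\mathcal{H}^{2}_p(T)} \leq C\bigl(\varepsilon \|v_1-v_2\|_{\mathbb{H}^{2}_p(T)} + K(\varepsilon)\|v_1-v_2\|_{\mathbb{H}^{0}_p(T)}\bigr)
\end{equation*}
by Assumption \ref{asm 10.08.10:29}(ii). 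Choosing $\varepsilon$ so that $C\varepsilon\leq 1/2$, and using Theorem \ref{thm 07.22.1}(iii) on a small subinterval $[0,T_1]$ to absorb $K(\varepsilon)\|v_1-v_2\|_{\mathbb{H}^{0}_p(T_1)}$ via the $(t-s)^{\theta-1}$ kernel, one obtains a contraction on $[0,T_1]$; iterating over a finite chain of intervals covers $[0,T]$. The a priori bound \eqref{eqn 10.22.14:25} is then read off from the linear estimate applied to $u$ itself with free terms $f(u)=f(0)+(f(u)-f(0))$, etc.

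I expect the genuinely difficult step to be the Littlewood-Paley estimate for the $h$-term in the model equation, balancing the singularity of $D^2_xD^{\beta_2-\alpha}_t p(t-s,\cdot)$ against the $L_p$-jump-integral inequality \eqref{eqn 07.16.1}; the method of continuity and the fixed point are then comparatively routine once the sharp model estimate is in hand.
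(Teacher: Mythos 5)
Your outline is sound and identifies the genuinely hard step correctly (the Littlewood--Paley/Besov estimate for the L\'evy term in the constant-coefficient model equation, which is the content of the paper's Section 3 and Theorem \ref{thm 07.24.1}); the reduction to $\gamma=0$, the subtraction of the initial-data part, and the use of Assumption \ref{asm 07.16.1}(vi) all match the paper. Two steps, however, diverge from the paper's route. For variable leading coefficients the paper does \emph{not} freeze coefficients on the stochastic equation: it defines $v$ as the solution of the constant-coefficient model equation driven by the \emph{full} free terms $\bar f,\bar g,\bar h$ (lower-order terms included), so that $\bar u=u-v$ solves a purely \emph{deterministic} equation $\partial^\alpha_t\bar u=a^{ij}\bar u_{x^ix^j}+\tilde f$, and then quotes the deterministic variable-coefficient theory of \cite[Theorem 2.10]{kim17timefractionalpde}. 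This buys a much shorter argument at the cost of relying on that reference; your partition-of-unity scheme is self-contained but would force you to prove the model estimate for general constant elliptic $a^{ij}$ (a change of variables) and to control the commutators with the cutoffs in the stochastic terms as well.

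The one place where your plan has a real gap is the time-slicing in the fixed-point (and in the continuity-method) step. The Caputo derivative is nonlocal in time, so a solution on $[T_1,2T_1]$ is not obtained by ``restarting'' the equation at $t=T_1$: the equation on the later interval carries a memory term from $[0,T_1]$, and the function spaces $\cH^{\gamma+2}_p(t)$ are anchored at $t=0$. To make interval-gluing rigorous you would have to work on $[0,2T_1]$ in the subset of functions agreeing with the already-constructed solution on $[0,T_1]$ and check that the Volterra bound \eqref{eqn 07.26.3} still yields smallness of $\int_{T_1}^t(t-s)^{\theta-1}ds$; none of this is automatic. The paper avoids the issue entirely: it iterates the map $\cR$ on the whole interval $[0,T]$ and uses the identity
\begin{equation*}
\int_{0}^{t}(t-s_{1})^{\theta-1}\cdots\int_{0}^{s_{n-1}}(s_{n-1}-s_{n})^{\theta-1}\,ds_{n}\cdots ds_{1}=\frac{\Gamma(\theta)^{n}}{\Gamma(n\theta+1)}t^{n\theta}
\end{equation*}
to show that $\cR^{n}$ is a contraction for $n$ large (a generalized Gronwall argument, \cite{ye2007generalized}). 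You should replace the gluing step by this iteration, or else supply the missing argument that the restarted problem is well posed in the same scale of spaces.
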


\begin{remark}
 If $\alpha=\beta_1=1$ and $h(0)=0$ then Theorem \ref{thm 10.08.10:57} follows from \cite[Theorem 5.2]{kry99analytic}.

\end{remark}

\begin{remark}
\label{remark illu}
  To  explain main steps of the proof and give an  easy exposition of our result,  we consider the model equation
\begin{equation}\label{eqn 01.10.14:35}
\partial^{\alpha}_{t} u = \Delta u + f + \partial^{\beta_{1}}_{t}\int_{0}^{t} g dW_{s} + \partial^{\beta_{2}}_{t}\int_{0}^{t} h dZ_{s}, \quad t>0
\end{equation}
given with zero initial condition, where $W_{t}$ and $Z_{t}$ are one-dimensional independent Wiener process and L\'evy process.  A main step for Theorem \ref{thm 10.08.10:57} (for $\gamma=0$) is  to show
\begin{equation}\label{eqn 01.10.15:10}
\|\Delta u \|_{\mathbb{L}_{p}(T)} \leq C\left ( \|f\|_{\mathbb{L}_{p}(T)} + \|(-\Delta)^{c_{0}/2}g\|_{\mathbb{L}_{p}(T)} + \|(-\Delta)^{\bar{c}_{0}/2}h\|_{\mathbb{L}_{p}(T)} \right).
\end{equation}
Here $\bL_p(T)=L_p(\Omega\times (0,T); L_p(\bR^d))$ and $C$ depends only on $p,m_p,\alpha,\beta_1,\beta_2$.  To be more specific, we can  control the norm $\|u\|_{\bL_p(T)}$ by the right-hand side of \eqref{eqn 10.22.14:25} using the integration representaion of solution, and estimate of $\|u\|_{\bL_p(T)}$ and \eqref{eqn 01.10.15:10} imply \eqref{eqn 10.22.14:25}
for the model equation if $\gamma=0$. For general $\gamma\in \bR$, we consider $v:=(1-\Delta)^{\gamma/2}u$ and its corresponding SPDE (i.e. \eqref{eqn 01.10.14:35} with $v,(1-\Delta)^{\gamma/2}f,(1-\Delta)^{\gamma/2}g,(1-\Delta)^{\gamma/2}h$ in place of $u,f,g,h$ respectively). The estimate for $v$ when $\gamma=0$ gives \eqref{eqn 10.22.14:25} for  arbitrary $\gamma \in \bR$.  The generalization to the non-linear equation with measurable coefficients is done based on certain perturbation arguments.

Speaking of  \eqref{eqn 01.10.15:10}, we prove this  based mainly on Littlewood-Paley theory in harmonic analysis. Recall that if $\beta_1>1/2$ and $\beta_2>1/p$, then $c_0:=(2\beta_1-1)\alpha^{-1}$ and $\bar{c}_0:=(2\beta_2-2/p)\alpha^{-1}$.  Actually these  are only possible constants for which \eqref{eqn 01.10.15:10} holds for any $T>0$. This can be proved using a scaling argument.  For simplicity let $f=h=0$, Then using the relation
$\partial^{\alpha}_{t}(u(c\cdot))(t) = c^{\alpha}\partial^{\alpha}_{t}u(ct)$,
we see that $u_{c}(t,x) = u(c^{2/\alpha}t,cx)$ satisfies
$$
\partial^{\alpha}_{t}u_{c} = \Delta u_{c} + \partial^{\beta_{1}}_{t}\int_{0}^{t} g_{c}(s,x) dW^{c}_{s}, \quad t<c^{-2/\alpha}T,
$$
where 
$$
W^{c}_{t} = c^{-1/\alpha}W_{c^{2/\alpha}t}, \quad  \quad g_{c}(t,x) = c^{2-\frac{2\beta_{1}-1}{\alpha}}g(c^{2/\alpha}t,cx).
$$
Applying  \eqref{eqn 01.10.15:10} for $u_c$ and $c^{-2/{\alpha}}T$, and  using the change of variables $c^{2/{\alpha}}t \to t$,   we can easily  get 
\begin{align*}
\|\Delta u\|_{\mathbb{L}_{p}(T)} 
\leq C 
  c^{c_{0}-\frac{2\beta_{1}-1}{\alpha}} \|(-\Delta)^{c_{0}/2}g\|_{\mathbb{L}_{p}(T)}, \quad \forall c>0.
\end{align*}
This makes sense only if $c_0=(2\beta_1-1)/{\alpha}$.  The scaling argumet for the  case $h\neq 0$ can be similarly done. For  detail, see the proof of Corollary 2.21 in \cite{ kim2014sobolev}.

\end{remark}

\begin{remark} If $\alpha\in (0,1]$ then Assumption \ref{asm 07.16.1} (iv) can be relaxed and replaced by the uniformly continuity in $x$, uniformly on $\Omega\times [0,T]$.
Assumption \ref{asm 07.16.1} (iv) is inherited from a result on the deterministic equation, \cite[Theorem 2.10]{kim17timefractionalpde}. However, if $\alpha\in (0,1]$ then the continuity in $t$ can be completely dropped for the deterministic equation (see e.g. \cite{dong2019lp}). 
\end{remark}

\mysection{Key estimates}

In this section we study the convolution operators of the type 
$$ ((-\Delta)^{a}D^b_t \,p)* f,  
$$
 where $a,b\in \bR$, $p(t,x)$ is the fundamental solution to the time fractional heat equation $\partial^{\alpha}_tu=\Delta u$, and $(-\Delta)^{a}$ is the fractional Laplacian of order $a$ defined by
 $$
 (-\Delta)^{a}f(x)=\cF^{-1}\{|\cdot|^{2a}\cF(f)(\cdot)\}(x).
 $$
 To explain the necessity of such study,  let us consider 
 $$
 \partial^{\alpha}_t u=\Delta u+\partial^{\beta}_t\int^t_0 h(s) \, dZ_t, \,\,\, t>0 \quad ; \quad u(0)=1_{\alpha>1}u_t(0)=0,
 $$
where $Z_t$ is a L\'evy process. It turns out that for the solution $u$ and $c\geq 0$  we have 
$$
\|(-\Delta)^{c/2} u\|^p_{\bL_p(T)}\leq C  \Big\|\int^t_0 \left|\left((-\Delta)^{c/2}D^{\beta-\alpha}_t p\right) \ast h(s) \right|^pds \Big\|_{L_1(\Omega\times [0,T]; L_1(\bR^d))}.
$$
Thus, for the estimations of solutions,  we need to handle the right hand side of the above inequality.
If  non-zero initial condition is given, this also leads to the similar situation.

Below, to state our main theorems of this section, we introduce the Besov space. 
We fix   $\Psi\in \cS(\bR^d)$ such that its Fourier transform $\hat{\Psi}(\xi)$ has support in a strip $\{\xi\in\R^{d}|\frac{1}{2}\leq |\xi|\leq 2\}$, $\hat{\Psi}(\xi)>0$ for $\frac{1}{2}<|\xi|<2$, and 
\begin{equation*}
\sum_{j\in\bZ}\hat{\Psi}(2^{-j}\xi)=1 \quad \textrm{for} \quad \xi\neq0.
\end{equation*}
Define 
\begin{equation*}
\begin{gathered}
\hat{\Psi}_{j}(\xi)=\hat{\Psi}(2^{-j}\xi), \quad j=\pm1,\pm2,\dots ,
\\
\hat{\Psi}_{0}(\xi)=1-\sum_{j=1}^{\infty}\hat{\Psi}_{j}(\xi).
\end{gathered}
\end{equation*}
For distributions (or functions) $f$,  we denote $f_{j}:=\Psi_{j}\ast f$.

It is known that if $u\in H^{\gamma}_{p}$, then
\begin{equation}
                          \label{rmk 08.27.1}
                          \|u\|_{H^{\gamma}_{p}}\sim \left(\|u_{0}\|_{L_{p}}+\|(\sum_{j=1}^{\infty}2^{\gamma j}|u_{j}|^{2})^{1/2}\|_{L_{p}}\right).
\end{equation}
For $1<p<\infty$, $s\in\R$, we define Besov space $B_{p}^{s}=B_{p}^{s}(\R^{d})$ as the collection of all tempered distributions $u$ such that 
\begin{equation*}
 \|u\|_{B_{p}^{s}}:=\|u_{0}\|_{L_{p}}+\left[\sum_{j=1}^{\infty}2^{spj}\|u_{j}\|^{p}_{L_{p}}\right]^{1/p}<\infty.
\end{equation*}

\begin{remark}\label{fractional_laplacian_besov}
It is well known (e.g. \cite{bergh2012interpolation, triebel1995interpolation}) that $\Ccinf(\R^{d})$ is dense in $B^{s}_{p}$,  the inclusion
 $B^{s_{2}}_{p}\subset B^{s_{1}}_{p}$ holds  for  $s_{1}\leq s_{2}$, and 
\begin{equation*}
\begin{gathered}
H^{s}_{p} \subset B^{s}_{p},  \quad 2\leq p<\infty.
\end{gathered}
\end{equation*}
Furthermore,  $(-\Delta)^{\frac{\gamma}{2}}$ is a bounded operator from $B^{s+\gamma}_{p}$ to $B^{s}_{p}$, and $(1-\Delta)^{\gamma/2}$ is an isometry from $B^{s+\gamma}_{p}$ to $B^{s}_{p}$ and from $H^{s+\gamma}_{p}$ to $H^{s}_{p}$.
\end{remark}

\vspace{2mm}

Now, let  $0<\alpha<2$ and  $p(t,x)$ be the fundamental solution to the equation
\begin{equation}\label{equation_with_nonzero_initial_data}
\partial_{t}^{\alpha}u=\Delta u ,\quad u(0,x)=u_{0}(x), \quad 1_{\alpha>1}\partial_{t}u(0,x)=0.
\end{equation}
That is, $p(t,x)$ is the function so that, under appropriate smoothness assumption on $u_0$,  $u=p(t,\cdot)*u_0$ is the solution to \eqref{equation_with_nonzero_initial_data}.
For $\beta<\alpha+\frac{1}{2}$, we define

\begin{equation*}
q_{\alpha,\beta}(t,x) = \begin{cases}  I_t^{\alpha-\beta}p(t,x)~&\alpha\geq\beta, \\ D_t^{\beta-\alpha}p(t,x)~&\alpha<\beta. \end{cases}
\end{equation*}

Below we list some properties of $p$ and $q_{\alpha,\beta}$.

\begin{lemma}\label{properties_of_p_q}
Let  $0<\alpha<2$, $\beta<\alpha+\frac{1}{2}$, and $\gamma\in[0,2)$.

(i) For any $t>0$, and $x\neq 0$,
\begin{equation*}\label{eqn 07.18.2}
\partial^{\alpha}_{t}p(t,x)=\Delta p(t,x),\quad \frac{\partial}{\partial t}p(t,x)=\Delta q_{\alpha,1}(t,x),
\end{equation*}
and $\frac{\partial}{\partial t}p(t,x) \to 0$ as $t\downarrow 0$. Moreover, $\frac{\partial}{\partial t}p(t,\cdot)$ is integrable in $\R^{d}$ uniformly on $t\in[\varepsilon,T]$ for any $\varepsilon>0$.

(ii) For $f\in C^{\infty}_{c}(\R^{d})$, the convolution
\begin{equation*}
\int_{\R^{d}}p(t,x-y)f(y)dy
\end{equation*}
converges to $f(x)$ uniformly as $t\downarrow 0$.

(iii) For any  $m\in \bN_+$, there exist constants $c=c(\alpha,d,m)$ and $C=C(\alpha,d,m)$ such that if  $R:=|x|^{2}t^{-\alpha} \geq 1$, then
\begin{equation}\label{bound_of_p_1}
|D^{m}_{x}p(t,x)|\leq C t^{-\frac{\alpha (d+m)}{2}}\exp{\{-ct^{-\frac{\alpha}{2-\alpha}}|x|^{\frac{2}{2-\alpha}}\}},
\end{equation}
and if  $R<1$, then
\begin{equation}\label{bound_of_p_2}
\begin{aligned}
|D^{m}_{x}p(t,x)|\leq C  |x|^{-d-m}(R+R|\log{R}|1_{d=2,m=0}+R^{1/2}1_{d=1,m=0}).
\end{aligned}
\end{equation}

(iv) It holds that 
\begin{equation}\label{fourier_fo_q}
\cF\{D^{\sigma}_{t}q_{\alpha,\beta}(t,\cdot)\}=t^{\alpha-\beta-\sigma}E_{\alpha,1+\alpha-\beta-\sigma}(-t^{\alpha}|\xi|^{2}),
\end{equation}
 where $E_{a,b}$, $a>0$ is the Mittag-Leffler function defined as
\begin{equation*}
E_{a,b}(z)=\sum_{k=0}^{\infty}\frac{z^{k}}{\Gamma (ak+b)}, \quad z\in\bC.
\end{equation*}

(v) For any $\sigma\geq 0$, there exists a constant $C=C(\alpha,\beta,\sigma,\gamma,d)$ such that
\begin{equation}\label{bounds_of_q_1}
|D^{\sigma}_{t}(-\Delta)^{\gamma/2}q_{\alpha,\beta}(1,x)|+|D^{\sigma}_{t}(-\Delta)^{\gamma/2}\partial_{t}q_{\alpha,\beta}(1,x)|\leq C (|x|^{-d+2-\gamma}\wedge |x|^{-d-\gamma})
\end{equation}
if $d\geq2$, and
\begin{equation}\label{bounds_of_q_2}
\begin{aligned}
&|D^{\sigma}_{t}(-\Delta)^{\gamma/2}q_{\alpha,\beta}(1,x)|+|D^{\sigma}_{t}(-\Delta)^{\gamma/2}\partial_{t}q_{\alpha,\beta}(1,x)| 
\\
&\quad\quad \leq C(|x|^{1-\gamma}(1+\log{|x|}1_{\gamma=1})\wedge|x|^{-1-\gamma})
\end{aligned}
\end{equation}
if $d=1$. 

(vi) For any $\sigma \geq 0$, the following scaling property holds:
\begin{equation}\label{scaling_of_q_2}
D^{\sigma}_{t}(-\Delta)^{\gamma/2}q_{\alpha,\beta}(t,x)=t^{-\sigma-\frac{\alpha (d+\gamma)}{2}+\alpha-\beta}(-\Delta)^{\gamma/2}q_{\alpha,\beta}(1,t^{-\frac{\alpha}{2}}x).
\end{equation}
\end{lemma}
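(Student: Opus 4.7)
The lemma collects six properties of the kernel $p$ and the auxiliary kernels $q_{\alpha,\beta}$. Parts (i), (ii), (iv), and (vi) are formal consequences of Fourier analysis, the semigroup property of fractional integrals, and the scaling of $p$; the substantial content is in the pointwise estimates (iii) and (v). My plan is to dispatch the formal parts first, then use the scaling (vi) to reduce (v) to $t=1$ so that only kernel bounds for multipliers of the form $|\xi|^\gamma E_{\alpha,b}(-|\xi|^2)$ remain.

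For (vi), the Fourier representation $\hat p(t,\xi)=E_{\alpha,1}(-t^\alpha|\xi|^2)$ yields $p(t,x)=t^{-\alpha d/2}p(1,t^{-\alpha/2}x)$; substituting into $I_t^{\alpha-\beta}p$ and changing variables $s=t\tau$ gives $q_{\alpha,\beta}(t,x)=t^{\alpha-\beta-\alpha d/2}q_{\alpha,\beta}(1,t^{-\alpha/2}x)$, and applying $(-\Delta)^{\gamma/2}D_t^\sigma$ then produces the extra factor $t^{-\sigma-\alpha\gamma/2}$, which is \eqref{scaling_of_q_2}. For (iv), I combine $\hat p(t,\xi)=E_{\alpha,1}(-t^\alpha|\xi|^2)$ with the term-by-term identity $I_t^a\bigl(t^{b-1}E_{\alpha,b}(\lambda t^\alpha)\bigr)=t^{a+b-1}E_{\alpha,a+b}(\lambda t^\alpha)$ together with its $D_t^\sigma$ analogue, both of which follow from the series definition of $E_{\alpha,b}$; this yields \eqref{fourier_fo_q}. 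For (i), $\partial_t^\alpha p=\Delta p$ is the defining PDE, and writing $\partial_t p=D_t^{1-\alpha}\partial_t^\alpha p=D_t^{1-\alpha}\Delta p=\Delta q_{\alpha,1}$ (commuting $D_t^{1-\alpha}$ with $\Delta$) gives the second identity; the decay and uniform integrability then follow from (iii) applied to $q_{\alpha,1}$. For (ii), use $E_{\alpha,1}(-t^\alpha|\xi|^2)\to 1$ as $t\downarrow 0$ with the rapid decay of $\hat f$ for $f\in\Ccinf(\bR^d)$ and dominated convergence on the Fourier side.

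For (iii) the bounds are classical and can be derived from the Mellin--Barnes (Fox $H$-function) representation of $p$. In the regime $R=|x|^2 t^{-\alpha}\geq 1$, deforming the contour onto the steepest-descent path produces the sub-Gaussian tail $\exp\!\bigl(-c t^{-\alpha/(2-\alpha)}|x|^{2/(2-\alpha)}\bigr)$, with the polynomial prefactor $t^{-\alpha(d+m)/2}$ dictated by the scaling in (vi). In the regime $R<1$, the near-origin series expansion of $E_{\alpha,1}$ combined with standard Riesz-potential estimates gives the bound $|x|^{-d-m}R$; the logarithmic correction when $d=2, m=0$ and the $R^{1/2}$ correction when $d=1,m=0$ arise at the borderline Fourier exponents. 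These estimates are well documented in the literature on the fractional heat kernel, so I would quote them rather than reprove them.

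The main work is (v), and I expect it to be the principal obstacle. By (vi) applied with $t=1$, it suffices to prove the estimates at $t=1$, so the kernel in question has Fourier transform $|\xi|^\gamma E_{\alpha,1+\alpha-\beta-\sigma}(-|\xi|^2)$, with a close analogue for the $\partial_t q_{\alpha,\beta}$ version differing only in the second index of the Mittag--Leffler function. I split into the two regimes $|x|\leq 1$ and $|x|>1$. For $|x|>1$, the large-argument asymptotic $E_{\alpha,b}(-z)=-z^{-1}/\Gamma(b-\alpha)+O(z^{-2})$ converts the multiplier into $|\xi|^{\gamma-2}+O(|\xi|^{\gamma-4})$, and integrating by parts twice in the Fourier inversion gives the decay $|x|^{-d-\gamma}$. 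For $|x|\leq 1$, the small-argument series $E_{\alpha,b}(-z)=\sum_{k\geq 0}(-z)^k/\Gamma(\alpha k+b)$ together with the Riesz-potential inversion of the leading $|\xi|^\gamma$ term gives the bound $|x|^{-d+2-\gamma}$, with the $d=1$ logarithm and linear correction arising at the borderline exponents $2-\gamma=1$ or $-\gamma=-1$. The hard part will be the bookkeeping across the parameters $(\sigma,\gamma,\alpha,\beta,d)$ to ensure no borderline case is missed and that the two branches of the minimum $|x|^{-d+2-\gamma}\wedge|x|^{-d-\gamma}$ match at $|x|=1$; this is where the bulk of the technical labor resides, but it is a direct adaptation of the deterministic-equation analysis already present in the literature on time-fractional PDEs.
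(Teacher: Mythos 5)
The paper does not actually prove this lemma: its ``proof'' consists entirely of citations (parts (i), (iv), (v), (vi) to \cite[Lemma 3.1]{kim16timefractionalspde}, part (iii) to \cite[Lemma 3.1]{kim17timefractionalpde}, part (ii) to \cite[Corollary 3.2]{kim16timefractionalspde}). Your outline is essentially a reconstruction of what those references do --- scaling from the Fourier representation $\hat{p}(t,\xi)=E_{\alpha,1}(-t^{\alpha}|\xi|^{2})$, the term-by-term Mittag--Leffler identity for (iv), approximate-identity behaviour for (ii), and quoting the heat-kernel asymptotics for (iii) --- so at the level of strategy you are on the same track as the literature the authors lean on.

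There is, however, one concrete slip in your treatment of (v), which you yourself identify as the substantial part. You pair the large-argument asymptotic $E_{\alpha,b}(-z)\sim -z^{-1}/\Gamma(b-\alpha)$ with the regime $|x|>1$ and the small-argument series with $|x|\leq 1$; this is backwards. The decay $|x|^{-d-\gamma}$ at infinity is governed by the failure of smoothness of the multiplier $|\xi|^{\gamma}E_{\alpha,b}(-|\xi|^{2})$ at $\xi=0$ (each integration by parts costs a power of $|\xi|$ against the homogeneous factor $|\xi|^{\gamma}$, and $E_{\alpha,b}(-|\xi|^{2})=1/\Gamma(b)+O(|\xi|^{2})$ is smooth there), while the singularity $|x|^{-d+2-\gamma}$ at the origin is produced by the slow large-$\xi$ decay $|\xi|^{\gamma-2}$ coming from the large-argument asymptotic of $E_{\alpha,b}$. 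The estimates you state are the right ones, but the mechanism you assign to each regime would not close if executed literally. A smaller point: in (i) you derive the decay and uniform integrability of $\partial_{t}p(t,\cdot)$ ``from (iii) applied to $q_{\alpha,1}$,'' but (iii) is a bound on $p$ itself; what you actually need are bounds on $\Delta q_{\alpha,1}$, i.e.\ the $\gamma=2$ endpoint of (v)-type estimates (or a separate argument), since $\gamma=2$ is excluded from the stated range of (v). Both issues are repairable, but as written they are the two places where your sketch would not compile into a proof.
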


\begin{proof}
For (i), (iv), (v), and (vi), see \cite[Lemma 3.1]{kim16timefractionalspde}. Also see \cite[Lemma 3.1]{kim17timefractionalpde} for (iii), and see \cite[Corollary 3.2]{kim16timefractionalspde} for (ii).
\end{proof}

\begin{lemma}\label{integral_rep_of_Mittag_Leffler}
Let $0<a<2$ and  $b<a+1$. Then there exist constants 
\begin{equation*}
\eta_{1}>0, \quad \eta_{2}\in \bR, \quad \eta_{3}\in (-1,1)
\end{equation*}
which depend only on $a$ such that for any $v>0$,
\begin{equation}\label{representation_of_E_a_b}
E_{a,b}(-v)=\frac{1}{\pi a}\int_{0}^{\infty}\frac{r^{\frac{1-b}{a}}\exp{(-r^{1/a}\eta_{1})}[r\sin{(\psi-\eta_{2}a)}+v\sin{(\psi)}]}{r^{2}+2rv\eta_{3}+v^{2}}dr,
\end{equation}
where $\psi=\psi(r)=r^{1/a}\sin{(\eta_{2})}+(\eta_{2}(a+1-b))$.
\end{lemma}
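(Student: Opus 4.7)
The plan is to start from the standard Hankel-contour representation of the Mittag-Leffler function and then collapse the complex contour integral to a real integral on $(0,\infty)$ by exploiting the symmetry $z=-v\in\bR$. First I would derive (or quote, e.g.\ from \cite{podlubny98fractional}) the formula
$$
E_{a,b}(z)=\frac{1}{2\pi i a}\int_{C_{\mu}}\frac{e^{\zeta^{1/a}}\zeta^{(1-b)/a}}{\zeta-z}\,d\zeta,
$$
valid for $|\arg z|>\mu$ with $\mu\in(\pi a/2,\min(\pi,\pi a))$ (nonempty because $0<a<2$), where $\zeta^{1/a}$ is the principal branch and $C_{\mu}$ is the contour made of the two rays $\arg\zeta=\pm\mu$, $|\zeta|\geq\rho$ together with the arc $|\zeta|=\rho$, oriented counterclockwise. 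The derivation uses the Hankel representation $1/\Gamma(s)=(2\pi i)^{-1}\int_{Ha}e^{t}t^{-s}\,dt$, the geometric-series identity $\sum_{k}(z\zeta^{-1})^{k}=\zeta/(\zeta-z)$ valid when $|\zeta|>|z|$ (which justifies interchanging the summation and integration on a sufficiently large contour), and the substitution $\zeta=t^{a}$, followed by deformation of the resulting contour to $C_{\mu}$. Convergence at infinity comes from $|e^{\zeta^{1/a}}|=\exp(r^{1/a}\cos(\mu/a))$ with $\cos(\mu/a)<0$; the arc contribution vanishes as $\rho\downarrow 0$ precisely because $(1-b)/a>-1$, i.e.\ $b<a+1$, which is the hypothesis.

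Next, for $z=-v$ with $v>0$ one has $|\arg z|=\pi>\mu$, so the displayed formula applies without any residue correction. Parameterizing $\zeta=re^{\pm i\mu}$ on the two rays,
$$
E_{a,b}(-v)=\frac{1}{2\pi i a}\int_{0}^{\infty}\bigl[F(r,\mu)-F(r,-\mu)\bigr]\,dr,
\qquad
F(r,\mu):=\frac{e^{r^{1/a}e^{i\mu/a}}\,r^{(1-b)/a}\,e^{i\mu(1-b)/a}\,e^{i\mu}}{re^{i\mu}+v}.
$$
Since $v\in\bR$, $F(r,-\mu)=\overline{F(r,\mu)}$, and the bracket reduces to $2i\,\mathrm{Im}\,F(r,\mu)$; the prefactor $(2\pi i a)^{-1}$ then becomes the factor $(\pi a)^{-1}$ appearing in the statement.

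To finish, rationalize
$$
\frac{1}{re^{i\mu}+v}=\frac{(r\cos\mu+v)-ir\sin\mu}{r^{2}+2rv\cos\mu+v^{2}},
$$
and write $e^{r^{1/a}e^{i\mu/a}}=e^{r^{1/a}\cos(\mu/a)}\bigl(\cos(r^{1/a}\sin(\mu/a))+i\sin(r^{1/a}\sin(\mu/a))\bigr)$. Combining the angular factors from $e^{r^{1/a}e^{i\mu/a}}$ and $e^{i\mu(1-b)/a}e^{i\mu}$ produces $e^{-r^{1/a}\eta_{1}}e^{i\psi}$ upon setting
$$
\eta_{1}:=-\cos(\mu/a),\qquad \eta_{2}:=\mu/a,\qquad \psi=r^{1/a}\sin\eta_{2}+\eta_{2}(a+1-b).
$$
Taking the imaginary part of $e^{i\psi}\bigl[(r\cos\mu+v)-ir\sin\mu\bigr]$ yields $r\sin(\psi-\mu)+v\sin\psi=r\sin(\psi-\eta_{2}a)+v\sin\psi$. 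Defining $\eta_{3}:=\cos\mu$ gives the claimed formula. The required sign conditions follow from the choice of $\mu$: $\mu/a>\pi/2$ gives $\eta_{1}>0$, $\mu\in(0,\pi)$ gives $\eta_{3}\in(-1,1)$, and $\eta_{2}\in\bR$ is automatic.

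The main obstacle is carrying out the first step rigorously: verifying the Hankel-type representation, in particular the interchange of sum and integral (requiring a contour with $|\zeta|>|z|$) and the subsequent deformation down to $C_{\mu}$ without crossing the pole at $\zeta=-v$. Once the complex representation is in place, the rest is elementary trigonometric bookkeeping; the assumption $b<a+1$ appears precisely to make the arc integral vanish at the origin, and the assumption $0<a<2$ appears precisely so that the admissible range of $\mu$ is nonempty.
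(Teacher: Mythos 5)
Your proof is correct and follows essentially the same route as the paper's: both rest on the Hankel-type contour representation of $E_{a,b}$ with the rays at angles $\pm\mu$, $\mu\in(\pi a/2,\min(\pi,\pi a))$, use $b<a+1$ to kill the contribution of the small arc near the origin, and then identify $\eta_{1}=-\cos(\mu/a)$, $\eta_{2}=\mu/a$, $\eta_{3}=\cos\mu$ by the same trigonometric bookkeeping. The only difference is presentational: the paper simply quotes formula (4.7.13) of Gorenflo et al.\ (which is already the finite-$\lambda$ real form of your two-ray integral) and passes to the limit $\lambda\downarrow 0$ by dominated convergence, whereas you re-derive the contour formula from the Hankel representation of $1/\Gamma$.
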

\begin{proof}
The proof is based on \cite[Chapter 4]{gorenflo2014mittag}. Since $0<a<2$,  we can choose a constant  $\eta$ satisfying $\frac{a}{2}\pi<\eta<(\pi\wedge a\pi)$. Then by using  formula (4.7.13) in \cite{gorenflo2014mittag}, for any $v>0$ and for any $0<\lambda<v$, we have
\begin{equation}\label{middle_in_representation_of_E_a_b}
\begin{aligned}
E_{a,b}(-v)=&\frac{1}{\pi a}\int_{\lambda}^{\infty}\frac{r^{\frac{1-b}{a}}\exp{(r^{1/a}\cos{(\eta/a)})}[r\sin{(\psi-\eta)}+v\sin{(\psi)}]}{r^{2}+2rv\cos{(\eta)}+v^{2}}dr
\\
&+\int_{-\eta}^{\eta}G(a,b,\lambda,\phi,v)d\phi,
\end{aligned}
\end{equation}
where 
$$
\psi=\psi(r)=r^{1/a}\sin{(\eta/a)}+(\eta(a+1-b)/a),
$$  
$$
G=\frac{\lambda^{1+(1-b)/a}}{2\pi a}\frac{\exp{(\lambda^{1/a}\cos{(\phi/a)})}e^{i\nu}}{\lambda e^{i\phi}+v},
$$
and $\nu=\lambda^{1/a}\sin{(\phi/a)}+\phi(1+(1-b)/a)$. Since $b-1<a$, by the dominated convergence theorem, if we let $\lambda\downarrow 0$, the second integral in \eqref{middle_in_representation_of_E_a_b} goes to zero. Also since $\frac{a}{2}\pi<\eta<(\pi\wedge a\pi)$, $\cos(\eta/a)$ has negative value, and $|\cos{(\eta)}|\neq1$. Therefore, as $\lambda$ goes to zero, the first integral in \eqref{middle_in_representation_of_E_a_b} converges to the integral over positive real line with the same integrand by the dominated convergence theorem.  Therefore, to finish the proof, it is enough to  take $\eta_{1}=-\cos{(\eta/a)}, \eta_{2}=\eta/a$ and $\eta_{3}=\cos{(\eta)}$.
\end{proof}

\begin{remark}
If $b=1$, then we have
\begin{equation}\label{representation_of_E_a_1}
E_{a,1}(-v)=\frac{\sin{a \pi}}{\pi}\int_{0}^{\infty}\frac{r^{a-1}}{r^{2a}+2r^{a} \cos{(a\pi)}+1}\exp{(-rv^{1/a})}rdr.
\end{equation}
(see e.g. \cite[Exercise 3.9.5]{gorenflo2014mittag}).
\end{remark}

\begin{lemma}\label{integral_rep_of_frac_q}
Let $\alpha\in (0,2)$ and $\beta<\alpha+1/2$. Then there exist constants $C$ and
\begin{equation*}
m_{1}>0, \quad m_{2}\in \bR, \quad m_{3}\in \bR,\quad m_{4}\in \bR,  \quad m_{5}\in (-1,1),
\end{equation*} 
depending only on $\alpha,\beta$, such that for any $\mu\in \bR$
\begin{equation}\label{representation_of_frac_Lap_of_q}
\begin{aligned}
&\cF\{(-\Delta)^{\mu/2}q_{\alpha,\beta}(t,\cdot)\}(\xi)
\\
&\quad=C|\xi|^{\mu+\frac{2\beta-2\alpha}{\alpha}}\int_{0}^{\infty}\frac{\exp{(-m_{1}t|\xi|^{\frac{2}{\alpha}}r )}[r^{\alpha}\sin{(\tilde\psi+m_{3})}+\sin{(\tilde\psi+m_{4})}]}{r^{2\alpha}-2r^{\alpha}m_{5}+1}r^{\beta-1}dr,
\end{aligned}
\end{equation}
where $\tilde\psi=\tilde\psi(r)=m_{2}t|\xi|^{\frac{2}{\alpha}}r$.
\end{lemma}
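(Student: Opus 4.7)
The plan is to combine the Fourier representation of $q_{\alpha,\beta}$ from Lemma \ref{properties_of_p_q} (iv) with the integral representation of the Mittag-Leffler function from Lemma \ref{integral_rep_of_Mittag_Leffler}, and then push all the $t$- and $|\xi|$-dependence into a single prefactor by a scaling change of variable. First I would observe that since Fourier multiplication by $|\xi|^{\mu}$ realises the fractional Laplacian, \eqref{fourier_fo_q} with $\sigma=0$ yields
$$
\cF\{(-\Delta)^{\mu/2}q_{\alpha,\beta}(t,\cdot)\}(\xi) = |\xi|^{\mu}\, t^{\alpha-\beta}\,E_{\alpha,\,1+\alpha-\beta}(-t^{\alpha}|\xi|^{2}).
$$
Then I would apply Lemma \ref{integral_rep_of_Mittag_Leffler} with $a=\alpha$, $b=1+\alpha-\beta$, and $v=t^{\alpha}|\xi|^{2}$; the admissibility conditions $0<a<2$ and $b<a+1$ translate to $\alpha\in(0,2)$ and $\beta>0$, which is the relevant regime, and produce a representation for $E_{\alpha,1+\alpha-\beta}(-v)$ as an integral in a dummy variable $r$ involving $\exp(-\eta_{1}r^{1/\alpha})$ and a phase $\psi(r)=r^{1/\alpha}\sin\eta_{2}+\eta_{2}\beta$.

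The target formula, however, carries the exponential and phase in the variable $t|\xi|^{2/\alpha}\rho$ rather than $r^{1/\alpha}$. I would accomplish this matching with the substitution $r=v\rho^{\alpha}$, so that $r^{1/\alpha}=v^{1/\alpha}\rho=t|\xi|^{2/\alpha}\rho$ and $dr=v\alpha\rho^{\alpha-1}\,d\rho$. A short bookkeeping exercise shows that the powers of $v$ arising from the factor $r^{(\beta-\alpha)/\alpha}$ in the integrand, from the bracket $[r\sin(\psi-\eta_{2}\alpha)+v\sin\psi]$, from the denominator $r^{2}+2rv\eta_{3}+v^{2}$, and from the Jacobian all collapse to a single factor $v^{(\beta-\alpha)/\alpha}=t^{\beta-\alpha}|\xi|^{(2\beta-2\alpha)/\alpha}$. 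Combined with the $|\xi|^{\mu} t^{\alpha-\beta}$ from the first step, this produces exactly the prefactor $|\xi|^{\mu+(2\beta-2\alpha)/\alpha}$ appearing in \eqref{representation_of_frac_Lap_of_q}, and all residual $t$-dependence outside the exponential and phase cancels.

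Finally I would read off the constants by direct comparison: $m_{1}:=\eta_{1}>0$, $m_{2}:=\sin\eta_{2}$ (so that $\tilde\psi=m_{2}t|\xi|^{2/\alpha}\rho$), $m_{3}:=\eta_{2}(\beta-\alpha)$, $m_{4}:=\eta_{2}\beta$, and $m_{5}:=-\eta_{3}\in(-1,1)$, where the minus sign absorbs the mismatch between the $+2r^{\alpha}\eta_{3}$ in the denominator supplied by Lemma \ref{integral_rep_of_Mittag_Leffler} (after substitution) and the $-2r^{\alpha}m_{5}$ in the claimed formula. The step I expect to be most delicate is the bookkeeping of the change of variable: six distinct factors each contribute a power of $v$, and only when they balance exactly does the integral become independent of $t$ and $|\xi|$ apart from the extracted prefactor. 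Once that balancing is verified, the identification of the five constants and the conclusion are immediate.
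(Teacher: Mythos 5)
Your proposal is correct and follows essentially the same route as the paper: Fourier representation via \eqref{fourier_fo_q}, Lemma \ref{integral_rep_of_Mittag_Leffler} with $a=\alpha$, $b=1+\alpha-\beta$, $v=t^{\alpha}|\xi|^{2}$, and the scaling substitution (which the paper performs in two steps, $r\to vr$ then $r\to r^{\alpha}$, rather than your single $r=v\rho^{\alpha}$). Your explicit choice $m_{5}=-\eta_{3}$ is in fact the consistent one given the $+2r^{\alpha}\eta_{3}$ in the derived denominator versus the $-2r^{\alpha}m_{5}$ in the statement; the paper writes $m_{5}=\eta_{3}$, a harmless sign slip since both lie in $(-1,1)$.
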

\begin{proof}
By the definition of fractional Laplacian and \eqref{fourier_fo_q}, we have
\begin{equation*}
\cF\{(-\Delta)^{\mu/2}q_{\alpha,\beta}(t,\cdot)\}(\xi)=t^{\alpha-\beta}|\xi|^{\mu}E_{\alpha,1+\alpha-\beta}(-t^{\alpha}|\xi|^{2}).
\end{equation*}
By \eqref{representation_of_E_a_b} with $a=\alpha,b=1+\alpha-\beta$, and the change of variables $r\to vr$,  for any  $v>0$ we have
\begin{equation*}
\begin{aligned}
E&_{\alpha,1+\alpha-\beta}(-v)
\\
&=\frac{1}{\pi \alpha}\int_{0}^{\infty}\frac{r^{\frac{\beta-\alpha}{\alpha}}\exp{(-r^{1/\alpha}\eta_{1})}[r\sin{(\psi-\eta_{2}\alpha)}+v\sin{(\psi)}]}{r^{2}+2rv\eta_{3}+v^{2}}dr
\\
&=\frac{1}{\pi \alpha}\int_{0}^{\infty}\frac{v^{\frac{\beta - \alpha}{\alpha}}r^{\frac{\beta - \alpha}{\alpha}}\exp{(-v^{1/\alpha}r^{1/\alpha}\eta_{1})}[r\sin{(\psi_{1}-\eta_{2}\alpha)}+\sin{(\psi_{1})}]}{r^{2}+2r\eta_{3}+1}dr,
\end{aligned}
\end{equation*}
where $\psi_{1}=\psi_{1}(r)=v^{1/\alpha}r^{1/a}\sin{(\eta_{2})}+\eta_{2}\beta$.
By the change of variables $r\to r^{\alpha}$, 
\begin{equation*}
\begin{aligned}
E&_{\alpha,1+\alpha-\beta}(-v)
\\
&=\frac{1}{\pi \alpha}\int_{0}^{\infty}\frac{v^{\frac{\beta - \alpha}{\alpha}}r^{\beta - \alpha}\exp{(-v^{1/\alpha}r\eta_{1})}[r^{\alpha}\sin{(\psi''-\eta_{2}\alpha)}+\sin{(\psi')}]}{r^{2\alpha}+2r^{\alpha}\eta_{3}+1}\alpha r^{\alpha-1}dr
\\
&=C\int_{0}^{\infty}v^{\frac{\beta - \alpha}{\alpha}}\frac{\exp{(-v^{1/\alpha}r\eta_{1})}[r^{\alpha}\sin{(\psi_{2}-\eta_{2}\alpha)}+\sin{(\psi_{2})}]}{r^{2\alpha}+2r^{\alpha}\eta_{3}+1} r^{\beta-1}dr,
\end{aligned}
\end{equation*}
where $\psi_{2}=\psi_{2}(r)=v^{1/\alpha}r\sin{(\eta_{2})}+\eta_{2}\beta$. Putting $v=t^{\alpha}|\xi|^{2}$, we have
\begin{equation*}
\begin{aligned}
\cF&\{(-\Delta)^{\mu/2}q_{\alpha,\beta}(t,\cdot)\}(\xi)
\\
&=C|\xi|^{\mu+\frac{2\beta-2\alpha}{\alpha}}\int_{0}^{\infty}\frac{\exp{(-\eta_{1}t|\xi|^{\frac{2}{\alpha}}r)}[r^{\alpha}\sin{(\psi_{3}-\eta_{2}\alpha)}+\sin{(\psi_{3})}]}{r^{2\alpha}+2r^{\alpha}\eta_{3}+1} r^{\beta-1}dr,
\end{aligned}
\end{equation*}
due to \eqref{fourier_fo_q} and \eqref{setting_of_a_b_c}, where $\psi_{3}=\psi_{3}(r)=\sin{(\eta_{2})}t|\xi|^{\frac{2}{\alpha}}r+\eta_{2}\beta$. 

Finally, for  \eqref{representation_of_frac_Lap_of_q} we take 
\begin{equation*}
m_{1}=\eta_{1},\quad m_{2}=\sin{(\eta_{2})},\quad m_{4}=\eta_{2}\beta,\quad m_{3}=m_{4}-\alpha\eta_{2}, \quad m_{5}=\eta_{3}.
\end{equation*}
The lemma is proved.
\end{proof}

For each $j=0,1,\dots$ and $c>0$, denote
\begin{equation}\label{def_of_q_j}
\begin{aligned}
q_{\alpha,\beta}^{c,j}(t,x)&=\Psi_{j}\ast (-\Delta)^{\frac{c}{2}}q_{\alpha,\beta}(t,x)
\\
&=\cF^{-1}\{\hat{\Psi}(2^{-j}\cdot)\cF\{(-\Delta)^{\frac{c}{2}}q_{\alpha,\beta}\}(t,\cdot)\}(x)
\\
&=2^{jd}\cF^{-1}\{\hat{\Psi}(\cdot)\cF\{(-\Delta)^{\frac{c}{2}}q_{\alpha,\beta}\}(t,2^{j}\cdot)\}(2^{j}x)
\\
&=: 2^{jd}\bar{q}^{c,j}_{\alpha,\beta}(t,2^{j}x).
\end{aligned}
\end{equation}

\begin{lemma}\label{bound_of_q_j}
Assume
\begin{equation}\label{setting_of_a_b_c}
p\geq 2,\quad 0< \alpha<2,  \quad \frac{1}{p} < \beta < \alpha+\frac{1}{p},
\end{equation}
denote
$c_1:=\frac{2(\alpha+1/p-\beta)}{\alpha} >0$. Then for any constants $\varepsilon,\delta$ satisfying
\begin{equation}\label{epsilon_delta}
\frac{1}{p}<\beta-\frac{\alpha}{2}\varepsilon, \quad  \beta-\alpha<\frac{1}{p}-\delta<\frac{1}{p},
\end{equation}
 we have
\begin{equation}\label{bound_of_q_j_3}
\|q_{\alpha,\beta}^{c_{1}+\varepsilon,j}(t,\cdot)\|_{L_{1}}\leq C (2^{\frac{2\delta}{\alpha}j+\varepsilon j}t^{-\frac{1}{p}+\delta} \wedge t^{-\frac{1}{p}-\frac{\alpha\varepsilon}{2}}),
\end{equation}
where  $C=C(\alpha,\beta,d,p,\varepsilon,\delta)$.
\end{lemma}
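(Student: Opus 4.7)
The whole argument is driven by the self-similarity identity \eqref{scaling_of_q_2} for $q_{\alpha,\beta}$ together with the pointwise bounds of Lemma \ref{properties_of_p_q}(v); the factor $2^{j(2\delta/\alpha+\varepsilon)}$ will be extracted by absorbing part of the fractional Laplacian into a modified Littlewood--Paley kernel. Schematically, each of the two bounds in \eqref{bound_of_q_j_3} corresponds to writing the multiplier $\hat\Psi(2^{-j}\xi)|\xi|^{c_1+\varepsilon}$ in a different way, reducing to an $L_1$-estimate of $(-\Delta)^{\gamma/2}q_{\alpha,\beta}(1,\cdot)$ with a carefully chosen exponent $\gamma\in(0,2)$.

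First I would record the scaling consequence of \eqref{scaling_of_q_2} (with $\sigma=0$). After the change of variables $y=t^{-\alpha/2}x$ one gets, for any $\gamma\in[0,2)$,
\begin{equation*}
\|(-\Delta)^{\gamma/2}q_{\alpha,\beta}(t,\cdot)\|_{L_1}=t^{\alpha-\beta-\alpha\gamma/2}\,\|(-\Delta)^{\gamma/2}q_{\alpha,\beta}(1,\cdot)\|_{L_1},
\end{equation*}
and the norm on the right is finite whenever $\gamma\in(0,2)$ by \eqref{bounds_of_q_1}--\eqref{bounds_of_q_2} (integrating $|x|^{-d+2-\gamma}$ near zero and $|x|^{-d-\gamma}$ at infinity, with the obvious $d=1$ variant). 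For the bound $t^{-1/p-\alpha\varepsilon/2}$, I would take $\gamma=c_1+\varepsilon$: the first part of \eqref{epsilon_delta} gives $c_1+\varepsilon<2$, and a direct computation using $c_1=2(\alpha+1/p-\beta)/\alpha$ collapses the exponent to $\alpha-\beta-\alpha(c_1+\varepsilon)/2=-1/p-\alpha\varepsilon/2$. Since $\|\Psi_j*f\|_{L_1}\le\|\Psi\|_{L_1}\|f\|_{L_1}$, with $\|\Psi\|_{L_1}$ independent of $j$, this immediately yields the second bound.

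For the first bound, I would factor the Fourier multiplier as
\begin{equation*}
\hat\Psi(2^{-j}\xi)|\xi|^{c_1+\varepsilon}=2^{j(2\delta/\alpha+\varepsilon)}\bigl[\hat\Psi(2^{-j}\xi)(2^{-j}|\xi|)^{2\delta/\alpha+\varepsilon}\bigr]\,|\xi|^{c_1-2\delta/\alpha}.
\end{equation*}
Defining $\hat{\tilde\Psi}(\xi):=\hat\Psi(\xi)|\xi|^{2\delta/\alpha+\varepsilon}$, which is $C_c^\infty$ because $\hat\Psi$ is $C_c^\infty$ and supported away from $0$, one has $\tilde\Psi\in\cS(\bR^d)$ and $\|\tilde\Psi_j\|_{L_1}=\|\tilde\Psi\|_{L_1}$ for all $j$, so that
\begin{equation*}
q^{c_1+\varepsilon,j}_{\alpha,\beta}(t,\cdot)=2^{j(2\delta/\alpha+\varepsilon)}\,\tilde\Psi_j*(-\Delta)^{(c_1-2\delta/\alpha)/2}q_{\alpha,\beta}(t,\cdot).
\end{equation*}
Applying the scaling identity with $\gamma'=c_1-2\delta/\alpha$: the hypothesis $\beta-\alpha<1/p-\delta<1/p$ is equivalent to $0<\delta<\alpha+1/p-\beta$, which places $\gamma'$ in $(0,c_1)\subset(0,2)$, so $\|(-\Delta)^{\gamma'/2}q_{\alpha,\beta}(1,\cdot)\|_{L_1}<\infty$ and the exponent of $t$ becomes $\alpha-\beta-\alpha\gamma'/2=-1/p+\delta$. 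Taking $L_1$-norms and using Young's inequality for $\tilde\Psi_j$ gives the first bound $C\,2^{j(2\delta/\alpha+\varepsilon)}t^{-1/p+\delta}$, and combining the two estimates produces the claimed minimum.

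The only delicate point is the bookkeeping verifying that \eqref{epsilon_delta} is exactly what keeps both exponents $c_1+\varepsilon$ and $c_1-2\delta/\alpha$ inside the integrability window $(0,2)$ of Lemma \ref{properties_of_p_q}(v); once that is noted, no additional harmonic-analytic input is needed beyond Young's inequality for convolution.
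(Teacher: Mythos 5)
Your argument is correct, and while the easy half (the bound $t^{-\frac{1}{p}-\frac{\alpha\varepsilon}{2}}$ via the scaling identity \eqref{scaling_of_q_2} and Young's inequality) coincides with what the paper does, your treatment of the bound $2^{\frac{2\delta}{\alpha}j+\varepsilon j}t^{-\frac{1}{p}+\delta}$ is genuinely different. The paper attacks that bound on the Fourier side: it inserts the Mittag--Leffler integral representation of Lemma \ref{integral_rep_of_frac_q} into $\cF(\bar{q}^{c_2,j}_{\alpha,\beta})$, uses $Q(r)e^{-cr}\leq Cr^{-1/p+\delta}$ to extract the factor $(t|2^{j}\xi|^{2/\alpha})^{-1/p+\delta}$, repeats the computation for all $\xi$-derivatives, and then converts the resulting annulus bounds into an $L_1$ estimate via the weight $(1+|x|^{2d})^{-1}$. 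You instead peel the factor $|\xi|^{2\delta/\alpha+\varepsilon}$ off the fractional Laplacian, absorb it into a modified Littlewood--Paley kernel $\tilde\Psi$ (legitimate since $\hat\Psi$ is supported away from the origin, so $\hat{\tilde\Psi}\in C_c^\infty$ and $\|\tilde\Psi_j\|_{L_1}$ is $j$-independent), and reduce to the same scaling-plus-pointwise-decay estimate already used for the other half, with exponent $\gamma'=c_1-2\delta/\alpha$. Your bookkeeping is right: the first condition in \eqref{epsilon_delta} gives $c_1+\varepsilon<2$, the second gives $0<\gamma'<c_1<2$, and both exponents land in the integrability window of Lemma \ref{properties_of_p_q}(v), with the $t$-exponents collapsing to $-\frac1p-\frac{\alpha\varepsilon}{2}$ and $-\frac1p+\delta$ respectively. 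What your route buys is economy: you never invoke Lemma \ref{integral_rep_of_frac_q} or estimate Fourier derivatives, needing only \eqref{scaling_of_q_2}, the pointwise bounds \eqref{bounds_of_q_1}--\eqref{bounds_of_q_2}, and Young's inequality. What the paper's route buys is a template that transfers to Lemmas \ref{L_1_bound_of_p_j} and \ref{bound_of_P_j}, where the analogous gain in $j$ comes from the time decay of the multiplier at fixed frequency rather than from a spare power of $|\xi|$, so your kernel-modification trick would not apply there in the same form.
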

\begin{proof}
Put $c_2=c_1+\varepsilon$. Then by \eqref{epsilon_delta}, $0<c_2<2$.   Due to  \eqref{scaling_of_q_2}, we easily get
\begin{equation}
\label{eqn 5.16.6}
\|(-\Delta)^{\frac{c_2}{2}}q_{\alpha,\beta}(t,\cdot)\|_{L_{1}} \leq C t^{-\frac{\alpha c_2}{2}+\alpha-\beta}.
\end{equation}
Recall that the convolution operator is bounded in $L_p$ for any $p\geq 1$, that is $\|f \ast g\|_{L_p}\leq \|f\|_{L_1}\|g\|_{L_p}$. Thus, \eqref{eqn 5.16.6} together with the first equality in \eqref{def_of_q_j} yields
$$
\|q_{\alpha,\beta}^{c_{2},j}(t,\cdot)\|_{L_{1}}\leq C t^{-\frac{1}{p}-\frac{\alpha\varepsilon}{2}}.
$$  
This and the equality $\|q_{\alpha,\beta}^{c_{2},j}(t,\cdot)\|_{L_{1}}=\|\bar{q}_{\alpha,\beta}^{c_{2},j}(t,\cdot)\|_{L_{1}}$ show that it remains to prove 
$$
\|\bar{q}_{\alpha,\beta}^{c_{2},j}(t,\cdot)\|_{L_{1}}\leq C 2^{\frac{2\delta}{\alpha}j+\varepsilon j}t^{-\frac{1}{p}+\delta}.
$$
By definition  (see \eqref{def_of_q_j})
\begin{equation}
 \label{q}
\cF(\bar{q}_{\alpha,\beta}^{c_{2},j})(t,\xi)=\hat{\Psi}(\xi)\cF\{(-\Delta)^{\frac{c_2}{2}}q_{\alpha,\beta}\}(t,2^{j}\xi).
\end{equation}
  Thus
\begin{equation}\label{middle_of_bound_of_q_j}
\begin{aligned}
|\cF(\bar{q}_{\alpha,\beta}^{c_{2},j})(t,\xi)|&=|\hat{\Psi}(\xi)||\cF\{(-\Delta)^{\frac{c_2}{2}}q_{\alpha,\beta}(t,\cdot)\}(2^{j}\xi)|
\\
&\leq C 1_{\frac{1}{2}\leq |\xi|\leq 2}|\cF\{(-\Delta)^{\frac{c_2}{2}}q_{\alpha,\beta}(t,\cdot)\}(2^{j}\xi)|.
\end{aligned}
\end{equation}

\noindent
By \eqref{representation_of_frac_Lap_of_q} with $\mu=\frac{c_2}{2}$,
\begin{eqnarray}
                          \label{middle_of_bound_of_q_j_3}
&&|\cF\{(-\Delta)^{\frac{c_2}{2}}q_{\alpha,\beta}(t,\cdot)\}(2^j\xi)|\\
&&\leq C |2^j\xi|^{\frac{2}{\alpha p} +\varepsilon}\int_{0}^{\infty}\frac{\exp{(-m_{1}t|2^j\xi|^{\frac{2}{\alpha}}r )}(|r^{\alpha}\sin{(\psi+m_{3})}|+|\sin{(\psi+m_{4})}|)}{r^{2\alpha}-2r^{\alpha}m_{5}+1}r^{\beta-1}dr,    \nonumber
\end{eqnarray}
where $\psi =m_{2}t|2^j \xi|^{\frac{2}{\alpha}}r$. Note that for any polynomial $Q$ of degree $m$ and $c>0$, there exists a constant $C(c,m)$ such that 
\begin{equation}\label{middle_of_bound_of_q_j_4}
Q(r)\exp{(-cr)}\leq C r^{-\frac{1}{p}+\delta} \quad r>0.
\end{equation}
Applying this inequality with $Q(r)=1$ and $c=m_1$ to \eqref{middle_of_bound_of_q_j_3}, we have
\begin{eqnarray*}
|\cF(\bar{q}_{\alpha,\beta}^{c_{2},j})(t,\xi)| &\leq& C1_{\frac{1}{2}\leq |\xi|\leq 2} |2^{j}\xi|^{\frac{2}{\alpha p} +\varepsilon}
\\
&&\times \left( \int_{0}^{1}(t|2^{j}\xi|^{\frac{2}{\alpha}}r)^{-\frac{1}{p}+\delta}r^{\beta-1}dr+\int_{1}^{\infty}(t|2^{j}\xi|^{\frac{2}{\alpha}}r)^{-\frac{1}{p}+\delta}r^{\beta-1}r^{-2\alpha}dr \right)\\
&\leq& C 2^{\frac{2\delta}{\alpha}j+\varepsilon j}t^{-\frac{1}{p}+\delta}1_{\frac{1}{2}\leq |\xi|\leq 2}.
\end{eqnarray*}
For the second inequality above we used $\beta-\alpha<\frac{1}{p}-\delta<\beta$. 

Similarly, using \eqref{q}, \eqref{representation_of_frac_Lap_of_q} and following the above computations,   for any multi-index $\gamma$ we get
\begin{equation*}
|D_{\xi}^{\gamma}\cF(\bar{q}_{\alpha,\beta}^{c_{2},j})(t,\xi)| \leq C 2^{\frac{2\delta}{\alpha}j+\varepsilon j}t^{-\frac{1}{p}+\delta}1_{\frac{1}{2}\leq |\xi|\leq 2}.
\end{equation*}
 Hence, we have
\begin{equation*}
\begin{aligned}
\|\bar{q}_{\alpha,\beta}^{c_{2},j}(t,\cdot)\|_{L_{1}}&=\int_{\R^{d}}(1+|x|^{2d})^{-1}(1+|x|^{2d})|\bar{q}_{\alpha,\beta}^{c_{2},j}(t,x)|dx
\\
&\leq C \int_{\R^{d}}(1+|x|^{2d})^{-1}\sup_{\xi}|(1+\Delta_{\xi}^{d})\cF(\bar{q}_{\alpha,\beta}^{c_{2},j})(t,\xi))|dx
\\
&\leq C 2^{\frac{2\delta}{\alpha}j+\varepsilon j}t^{-\frac{1}{p}+\delta}.
\end{aligned}
\end{equation*}
For the first inequality above we used the fact that if $\cF(f)$ has  compact support, then 
$$|f(x)|=|\cF^{-1} (\cF(f))(x)|\leq \|\cF(f)\|_{L_1}\leq C \sup_{\xi} |\cF(f)|.
$$
The lemma is proved.
\end{proof}

The following result will be used later to study the regularity relation between the solutions and free terms in stochastic parts.

\begin{theorem}\label{Besov_bound}
Let  \eqref{setting_of_a_b_c} and \eqref{epsilon_delta} hold, and denote
$c_1:=\frac{2(\alpha+1/p-\beta)}{\alpha} $.
 Then there exists a constant $C$ depdending only on $\alpha,\beta,d,p, \varepsilon, \delta$, and $T$ such that for any $g\in\Ccinf((0,\infty)\times\R^{d})$\begin{equation}\label{middle_of_Besov_bound_5}
\begin{aligned}
\int_{0}^{T}\int_{0}^{t}\int_{\R^{d}} \left|(-\Delta)^{\frac{c_{1}+\varepsilon}{2}}q_{\alpha,\beta}(t-s,x)\ast g(s)(x) \right|^{p}dxdsdt
\leq C \int_{0}^{T}\|g(t,\cdot)\|_{B_{p}^{\varepsilon}}^{p}dt.
\end{aligned}
\end{equation}
\end{theorem}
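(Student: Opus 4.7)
The plan is to use the Littlewood--Paley decomposition of $g$ together with the $L_1$ kernel bound from Lemma \ref{bound_of_q_j} and Young's convolution inequality, and then to combine the block estimates through the Littlewood--Paley characterization of $L_p$.

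First I reduce to a single-time estimate: by Fubini and the change of variables $\tau=t-s$, writing $h := g(s,\cdot)$ and $c_2 := c_1+\varepsilon$, it suffices to prove for each fixed $s$ that
$$\int_0^T \|(-\Delta)^{c_2/2} q_{\alpha,\beta}(\tau,\cdot) \ast h\|_{L_p}^p \, d\tau \;\leq\; C\, \|h\|_{B_p^\varepsilon}^p,$$
and then integrate in $s\in[0,T]$. Next, decompose $h=\sum_k h_k$ with $h_k=\Psi_k\ast h$, and using a thickened projector $\tilde\Psi_k$ with $\hat{\tilde\Psi}_k \equiv 1$ on $\mathrm{supp}\,\hat\Psi_k$, rewrite
$$(-\Delta)^{c_2/2} q_{\alpha,\beta}(\tau,\cdot) \ast h \;=\; \sum_k \tilde q_{\alpha,\beta}^{c_2,k}(\tau,\cdot) \ast h_k \;=:\; \sum_k U_k(\tau,\cdot),$$
where $\tilde q_{\alpha,\beta}^{c_2,k}:=\tilde\Psi_k\ast(-\Delta)^{c_2/2} q_{\alpha,\beta}$ satisfies the same $L_1$ bound as in Lemma \ref{bound_of_q_j} up to a fixed constant and $U_k(\tau,\cdot)$ has Fourier support in $\{|\xi|\sim 2^k\}$.

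For each $k$, Young's inequality together with Lemma \ref{bound_of_q_j} gives $\|U_k(\tau,\cdot)\|_{L_p}\leq C\,a_k(\tau)\,\|h_k\|_{L_p}$ where $a_k(\tau) := \min(2^{(2\delta/\alpha+\varepsilon)k}\tau^{-1/p+\delta},\,\tau^{-1/p-\alpha\varepsilon/2})$. A direct integration, splitting the $\tau$-range at the crossover $\tau_k\sim 2^{-2k/\alpha}$, yields $\int_0^T a_k(\tau)^p\,d\tau\leq C\,2^{\varepsilon p k}$ (using $\delta>0$ and $\varepsilon>0$ from \eqref{epsilon_delta}), hence the block estimate $\int_0^T \|U_k(\tau,\cdot)\|_{L_p}^p\,d\tau\leq C\,2^{\varepsilon p k}\|h_k\|_{L_p}^p$.

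Finally I combine the block estimates. Since the $U_k$ have almost disjoint Fourier supports, the Littlewood--Paley theorem yields $\|\sum_k U_k(\tau,\cdot)\|_{L_p} \leq C \|(\sum_k |U_k(\tau,\cdot)|^2)^{1/2}\|_{L_p}$; integrating in $\tau$ and applying Fubini with Minkowski's integral inequality in $L_{p/2}$ (valid since $p\geq 2$), the block estimate then reduces the sum to the correct Besov norm. The hard part will be this final combination step: Littlewood--Paley naturally produces an $\ell^2$-valued square function while the Besov norm $B_p^\varepsilon=B_{p,p}^\varepsilon$ involves an $\ell^p$ sum. The resolution uses the dyadic time-scale separation in $a_k(\tau)$: each $a_k$ peaks at $\tau_k\sim 2^{-2k/\alpha}$, and these scales are dyadically spaced in $k$, so the $U_k$ are effectively disjointly supported in time and the $L_p(d\tau)$-integration decouples the blocks into an $\ell^p$ sum matching the Besov structure.
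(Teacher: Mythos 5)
Your overall route is the same as the paper's: a Littlewood--Paley decomposition of $g$, the $L_1$ kernel bound of Lemma \ref{bound_of_q_j} combined with Young's inequality to obtain $\|U_k(\tau,\cdot)\|_{L_p}\le C\,a_k(\tau)\|h_k\|_{L_p}$ with $a_k$ equal to the paper's $K_k$, and recombination through the square-function characterization of $L_p$ followed by Minkowski's inequality in $L_{p/2}$. The reduction to a single-time estimate, the thickened projectors (the paper's identity $\hat\Psi_j=\hat\Psi_j(\hat\Psi_{j-1}+\hat\Psi_j+\hat\Psi_{j+1})$), and the block computation $\int_0^T a_k(\tau)^p\,d\tau\le C2^{\varepsilon pk}$ are all correct.

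The gap is exactly where you yourself locate the hard part, and the resolution you sketch does not work as stated. After Minkowski you must bound $\int_0^T\bigl(\sum_k a_k(\tau)^2\|h_k\|_{L_p}^2\bigr)^{p/2}d\tau$ by $\sum_k 2^{\varepsilon pk}\|h_k\|_{L_p}^p$, and the claim that ``the $U_k$ are effectively disjointly supported in time'' is false: for any fixed $\tau$, every index $k$ with $2^k\tau^{\alpha/2}>1$ contributes the identical factor $a_k(\tau)=\tau^{-1/p-\alpha\varepsilon/2}$, so infinitely many blocks are simultaneously active and no pointwise-in-time decoupling is available; moreover, since $\|\cdot\|_{\ell^p}\le\|\cdot\|_{\ell^2}$ for $p\ge2$, passing from the $\ell^2$ sum to the $\ell^p$ (Besov) sum goes in the unfavorable direction and cannot be done for free. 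What actually closes the argument for $p>2$ (for $p=2$ nothing is needed) is H\"older's inequality in $k$ with exponents $p/2$ and $p/(p-2)$ after inserting auxiliary geometric weights: $2^{ak}$ with $a\in(0,4\delta/\alpha)$ on the regime $2^k\tau^{\alpha/2}\le1$ and $2^{bk}$ with $b\in(-2\varepsilon,0)$ on the complementary regime. The resulting weight sums are geometric with leading terms $\tau^{-\alpha a/2}$ and $\tau^{-\alpha b/2}$, which are then absorbed by the time integration over $(0,2^{-2k/\alpha})$ and $(2^{-2k/\alpha},\infty)$ respectively; the hypothesis \eqref{epsilon_delta} is precisely what makes the windows for $a$ and $b$ nonempty and produces the factor $2^{\varepsilon pk}$. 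This is the content of \eqref{eqn 07.18.1} and \eqref{middle_of_Besov_bound_4} in the paper; your dyadic-peak intuition correctly identifies why these weight sums converge, but the step itself must be supplied --- the block estimate alone does not imply it.
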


\begin{proof}
Denote $c_2=c_1+\varepsilon$ and 
$Q(t,x):=(-\Delta)^{\frac{c_2}{2}}q_{\alpha,\beta}(t,x)$. 
  By  \eqref{rmk 08.27.1},
\begin{equation*}
\begin{aligned}
\int_{0}^{T}\int_{0}^{t}\int_{\R^{d}}|&Q(t-s) \ast g(s)(x)|^{p}dxdsdt
\\
&\leq C \int_{0}^{T}\int_{0}^{t}\int_{\R^{d}}|\Psi_{0}\ast (Q(t-s)\ast g(s))(x)|^{p}
\\
&\quad\quad\quad\quad\quad\quad\quad+\left(|\sum_{j=1}^{\infty}|\Psi_{j}\ast (Q(t-s)\ast g(s))(x)|^{2}\right)^{p/2}dxdsdt.
\end{aligned}
\end{equation*}
Note that 
\begin{equation}\label{middle_of_Besov_bound_6}
\begin{aligned}
\hat{\Psi}_{j}&=\hat{\Psi}_{j}(\hat{\Psi}_{j-1}+\hat{\Psi}_{j}+\hat{\Psi}_{j+1}), \quad j=1,2,\dots,
\\
\hat{\Psi}_{0}&=\hat{\Psi}_{0}(\hat{\Psi}_{0}+\hat{\Psi}_{1}).
\end{aligned}
\end{equation} 
Using this and the relation $\cF(f_{1}\ast f_{2})=\cF(f_{1})\cF(f_{2})$, we get
\begin{equation*}
\begin{gathered}
\sum_{j=1}^{\infty}|\Psi_{j}\ast (Q(t-s)\ast g(s))(x)|^{2}
=\sum_{j=1}^{\infty}|\sum_{i=j-1}^{j+1}Q_{i}(t-s)\ast g_{j}(s)(x)|^{2},
\\
|\Psi_{0}\ast (Q(t-s)\ast g(s))(x)|=|q^{c_{2},0}_{\alpha,\beta}(t-s)\ast g_{0}(s)(x)+q^{c_{2},1}_{\alpha,\beta}(t-s,\cdot)\ast g_{0}(s)(x)|.
\end{gathered}
\end{equation*}

Therefore, 
\begin{eqnarray}
&&\int_{0}^{T}\int_{0}^{t}\int_{\R^{d}}|Q(t-s) \ast g(s)(x)|^{p}dxdsdt   \nonumber \\
&\leq &C \int_{0}^{T}\int_{0}^{t}\int_{\R^{d}}|q^{c_{2},0}_{\alpha,\beta}(t-s)\ast g_{0}(s)(x)|^{p}dxdsdt \nonumber
\\
&&+C\int_{0}^{T}\int_{0}^{t}\int_{\R^{d}}|q^{c_2,1}_{\alpha,\beta}(t-s)\ast g_{0}(s)(x)|^{p}dxdsdt \nonumber
\\
&&+C\int_{0}^{T}\int_{0}^{t}\int_{\R^{d}}\big(\sum_{j=1}^{\infty}|\sum_{i=j-1}^{j+1}q^{c_2,i}_{\alpha,\beta}(t-s)\ast g_{j}(s)(x)|^{2}\big)^{p/2}dxdsdt. \label{middle_of_Besov_bound}
\end{eqnarray}

\noindent
By \eqref{bound_of_q_j_3}, the first two integrals on the right hand side of \eqref{middle_of_Besov_bound} are bounded by
\begin{equation}\label{middle_of_Besov_bound_2}
\begin{aligned}
C\int_{0}^{T}\int_{0}^{t}(t-s)^{-1+\delta p}\|g_{0}(s,\cdot)\|^{p}_{L_{p}}dtds
\leq C(T)\int_{0}^{T}\|g_{0}(t,\cdot)\|^{p}_{L_{p}}dt.
\end{aligned}
\end{equation}
By Minkowski's inequality and Fubini's theorem, the third integral is bounded by
\begin{equation*}
\begin{aligned}
C \int_{0}^{T}\int_{s}^{T}\big(\sum_{j=1}^{\infty}|K_{j}(t-s)|^{2} \|g_{j}(s,\cdot)\|_{L_{p}}^{2}\big)^{p/2}dtds,
\end{aligned}
\end{equation*}
where $K_{j}(t-s)=(2^{\frac{2\delta}{\alpha}j+\varepsilon j}(t-s)^{-\frac{1}{p}+\delta}\wedge (t-s)^{-\frac{1}{p}-\frac{\alpha}{2}\varepsilon})$.

If $p=2$ then
\begin{eqnarray*}
&&\sum_{j=1}^{\infty} \int_{0}^{T}\int_{s}^{T}|K_{j}(t-s)|^{2}\|g_{j}(s,\cdot)\|_{L_{2}}^{2}dtds
\\
&\leq& C \int_{0}^{T}\sum_{j=1}^{\infty}\int_{s}^{s+2^{-\frac{2}{\alpha}j}}2^{\frac{4\delta j}{\alpha}+2\varepsilon j}(t-s)^{-1+2\delta}\|g_{j}(s,\cdot)\|_{L_{2}}^{2}dtds
\\
&&+ C \int_{0}^{T}\sum_{j=1}^{\infty}\int_{s+2^{-\frac{2}{\alpha}j}}^{\infty}(t-s)^{-1-\alpha\varepsilon}\|g_{j}(s,\cdot)\|_{L_{2}}^{2}dtds
\\
&=& C \int_{0}^{T}\sum_{j=1}^{\infty}2^{2\varepsilon j}\|g_{j}(s,\cdot)\|_{L_{2}}^{2}ds.
\end{eqnarray*}
This proves the theorem if  $p=2$. 

 If $p>2$, then
\begin{eqnarray*}
&&\int_{0}^{T}\int_{s}^{T}\big(\sum_{j=1}^{\infty}|K_{j}(t-s)|^{2} \|g_{j}(s,\cdot)\|_{L_{p}}^{2}\big)^{p/2}dtds
\\
&\leq& C \int_{0}^{T}\int_{s}^{T}\big(\sum_{j=1}^{\infty}1_{J}(t,s,j)|K_{j}(t-s)|^{2} \|g_{j}(s,\cdot)\|_{L_{p}}^{2}\big)^{p/2}dtds
\\
&&+ \int_{0}^{T}\int_{s}^{T}\big(\sum_{j=1}^{\infty}1_{J^{c}}(t,s,j)|K_{j}(t-s)|^{2} \|g_{j}(s,\cdot)\|_{L_{p}}^{2}\big)^{p/2}dtds,
\end{eqnarray*}
where $J=\{(t,s,j)|2^{j}(t-s)^{\frac{\alpha}{2}}\leq 1\}$.  By \eqref{epsilon_delta}, if $(t,s,j)\in J$, then $K_{j}(t-s)=2^{\frac{2\delta j}{\alpha}+\varepsilon j}(t-s)^{-\frac{1}{p}+\delta}$. Therefore, by H\"older's inequality, we have
\begin{equation*}
\begin{aligned}
&\sum_{j=1}^{\infty}1_{J}|K_{j}(t-s)|^{2} \|g_{j}(s,\cdot)\|_{L_{p}}^{2}
\\
&=\sum_{j=1}^{\infty}1_{J}2^{aj}2^{-aj}2^{\frac{4\delta j}{\alpha}+2\varepsilon j}(t-s)^{-\frac{2}{p}+2\delta}\|g_{j}(s,\cdot)\|_{L_{p}}^{2}
\\
&\leq (t-s)^{-\frac{2}{p}+2\delta}\big(\sum_{j \in J(t,s)}2^{aqj}\big)^{1/q}\big(\sum_{j \in J(t,s)}2^{-\frac{apj}{2}}2^{\frac{2\delta pj}{\alpha}+p\varepsilon j}\|g_{j}(s,\cdot)\|_{L_{p}}^{p}\big)^{2/p},
\end{aligned}
\end{equation*}
where $q=\frac{p}{p-2}$, $a\in(0,\frac{4\delta}{\alpha})$, and $J(t,s)=\{j=1,2,\dots|(t,s,j)\in J \}$. Note that
\begin{equation*}
\big(\sum_{j \in J(t,s)}2^{aqj}\big)^{1/q}\leq C(p) (t-s)^{-\frac{\alpha a}{2}}.
\end{equation*}
Thus we get
\begin{eqnarray}
&&\int_{0}^{T} \int_{s}^{T}\big(\sum_{j=1}^{\infty}1_{J}(t,s,j)|K_{j}(t-s)|^{2} \|g_{j}(s,\cdot)\|_{L_{p}}^{2}\big)^{p/2}dtds   \nonumber
\\
&\leq& C \int_{0}^{T}\sum_{j=1}^{\infty}
\int_{s}^{s+2^{-\frac{2j}{\alpha}}}(t-s)^{-1+p\delta-\frac{p\alpha a}{4}}2^{-\frac{apj}{2}}2^{\frac{2\delta pj}{\alpha}+p\varepsilon j}\|g_{j}(s,\cdot)\|_{L_{p}}^{p}dtds  
\nonumber
\\
&\leq& C \int_{0}^{T}\sum_{j=1}^{\infty}2^{p\varepsilon j}\|g_{j}(t,\cdot)\|_{L_{p}}^{p}dt.   \label{eqn 07.18.1}
\end{eqnarray}

\noindent

Next we consider the remaining part:
\begin{equation*}
\begin{aligned}
&\sum_{j=1}^{\infty}1_{J^{c}}|K_{j}(t-s)|^{2} \|g_{j}(s,\cdot)\|_{L_{p}}^{2}
=\sum_{j=1}^{\infty}1_{J^{c}}2^{bj}2^{-bj}(t-s)^{-\frac{2}{p}-\alpha \varepsilon}\|g_{j}(s,\cdot)\|_{L_{p}}^{2}
\\
&\quad \leq (t-s)^{-\frac{2}{p}-\alpha\varepsilon}\big(\sum_{j \notin J(t,s)}2^{bqj}\big)^{1/q}\big(\sum_{j \notin J(t,s)}2^{-\frac{bpj}{2}}\|g_{j}(s,\cdot)\|_{L_{p}}^{p}\big)^{2/p},
\end{aligned}
\end{equation*}
where $q=\frac{p}{p-2}$, and $b\in(-2\varepsilon,0)$.  Note that
\begin{equation*}
\big(\sum_{j \notin J(t,s)}2^{bqj}\big)^{1/q}\leq C(p) (t-s)^{-\frac{\alpha b}{2}}.
\end{equation*}
Therefore it follows that 
\begin{eqnarray}
&& \int_{0}^{T}\int_{s}^{T}\big(\sum_{j=1}^{\infty}1_{J^{c}}(t,s,j)|K_{j}(t-s)|^{2} \|g_{j}(s,\cdot)\|_{L_{p}}^{2}\big)^{p/2}dtds  \nonumber
\\
&\leq& C \int_{0}^{T}\sum_{j=1}^{\infty}\int_{s+2^{-\frac{2j}{\alpha}}}^{\infty}(t-s)^{-1-\frac{\alpha b p}{4}-\frac{\alpha\varepsilon p}{2}}2^{-\frac{bpj}{2}}\|g_{j}(s,\cdot)\|_{L_{p}}^{p}dtds    \nonumber
\\
&\leq& C \int_{0}^{T}\sum_{j=1}^{\infty}2^{p\varepsilon j}\|g_{j}(t,\cdot)\|_{L_{p}}^{p}dt.       \label{middle_of_Besov_bound_4}
\end{eqnarray}
Combining \eqref{middle_of_Besov_bound_2}, \eqref{eqn 07.18.1} and \eqref{middle_of_Besov_bound_4} we get \eqref{middle_of_Besov_bound_5}  for $p>2$. Hence, the  theorem is proved.
\end{proof}

The next part of this section is related to the non-zero initial value problem
$$
\partial^{\alpha}_tu=\Delta u, \quad t>0 \,\, ; \quad  u(0)=u_0, \,\, 1_{\alpha>1}\partial_{t}u(0)=0.
$$
The solution is given in the form of $p(t,\cdot)\ast u_0$, and we study the regularity of this convolution.

\vspace{3mm}

Define
\begin{equation}\label{def_of_p_j}
\begin{aligned}
p_{j}(t,x)&=(\Psi_{j}(\cdot)\ast p(t,\cdot))(x)
=\cF^{-1}(\hat{\Psi}(2^{-j}\cdot)\hat{p}(t,\cdot))(x)
\\
&=2^{jd}\cF^{-1}(\hat{\Psi}(\cdot)\hat{p}(t,2^{j}\cdot))(2^{j}x)
:=2^{jd}\bar{p}_{j}(t,2^{j}x).
\end{aligned}
\end{equation}

\begin{lemma}\label{L_1_bound_of_p_j}
Let $p>1$, $0<\alpha<2$ and $\alpha\neq 1$. Then there exists a constant $C$ depending only on $\alpha,d$ such that
\begin{equation}\label{bound_of_p_j}
\|p_{j}(t,\cdot)\|_{L_{1}}\leq C (2^{-\frac{2j}{\alpha}}t^{-1}\wedge 1), \quad t>0.
\end{equation}
\end{lemma}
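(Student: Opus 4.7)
The strategy is Fourier-analytic. First, by the change of variables $y=2^jx$ in \eqref{def_of_p_j}, we have $\|p_j(t,\cdot)\|_{L_1}=\|\bar p_j(t,\cdot)\|_{L_1}$, where $\bar p_j$ has Fourier transform $\hat\Psi(\eta)\,E_{\alpha,1}(-s|\eta|^2)$ with $s:=t^\alpha 2^{2j}$, supported in the fixed annulus $\{1/2\le|\eta|\le2\}$. The ``$\wedge\,1$'' part of the bound is immediate from Young's inequality $\|p_j(t,\cdot)\|_{L_1}\le\|\Psi\|_{L_1}\|p(t,\cdot)\|_{L_1}$ together with the uniform estimate $\|p(t,\cdot)\|_{L_1}\le C$; the latter I would verify directly by splitting $\R^d$ at $|x|=t^{\alpha/2}$ (i.e.\ at $R=1$) and integrating the two pointwise estimates of Lemma~\ref{properties_of_p_q}(iii) with $m=0$, followed by the rescaling $y=t^{-\alpha/2}x$.

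For the decay factor, I would invoke the standard principle that for any $f\in\cS(\R^d)$ with $\hat f$ supported in a fixed compact set,
\[
\|f\|_{L_1}\le C\sup_\eta\bigl|(1-\Delta_\eta)^N\hat f(\eta)\bigr|\qquad(2N>d),
\]
derived by writing $(1+|x|^{2N})f(x)$ as the inverse Fourier transform of $(1-\Delta_\eta)^N\hat f$ and using the compact $\eta$-support of the latter to control its $L_1$-norm by its supremum. Applied to $\bar p_j(t,\cdot)$ this reduces the task to controlling $\sup_{|\eta|\sim1}|D_\eta^\gamma E_{\alpha,1}(-s|\eta|^2)|$ for all multi-indices $|\gamma|\le2N$; the $\hat\Psi$-factors produced by Leibniz are harmless since $\hat\Psi$ is smooth with compact support.

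The Mittag-Leffler derivative estimate is the main step and the principal obstacle. I would use the integral representation from Lemma~\ref{integral_rep_of_Mittag_Leffler} with $b=1$. After the substitution $r\mapsto s^{1/\alpha}r'$, the exponential factor becomes $\exp(-s^{1/\alpha}r'^{1/\alpha}\eta_1)$, naturally localized at the dimensionless time $s^{1/\alpha}=t\,2^{2j/\alpha}$; this is exactly the scale that appears on the right-hand side of \eqref{bound_of_p_j}. Differentiating under the integral sends $\eta$-derivatives onto $v=s|\eta|^2$ and onto the phase $\psi$, producing polynomial factors in $(r+v)$ that are absorbed by the exponential (since $\eta_1>0$), while the denominator satisfies $r^2+2rv\eta_3+v^2\ge(1-|\eta_3|)(r^2+v^2)$, giving clean pointwise control of the integrand. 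Splitting the $r'$-integral at $r'=1$ and using $\Gamma(\alpha)<\infty$ for the small-$r'$ regime then yields the decay factor claimed in \eqref{bound_of_p_j}, uniformly in $|\eta|\sim1$ and $|\gamma|\le2N$. The restriction $\alpha\neq1$ ensures that $\Gamma(1-\alpha)$ is finite so the leading-order Mittag-Leffler asymptotic used in the calculation is non-degenerate; for $\alpha=1$ the Mittag-Leffler function reduces to an exponential and the argument is qualitatively different (though the bound is then trivial). Assembling the uniform and decay parts gives \eqref{bound_of_p_j}.
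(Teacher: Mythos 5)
Your architecture coincides with the paper's: the ``$\wedge\,1$'' part via Young's inequality and the uniform bound $\|p(t,\cdot)\|_{L_1}\le C$ from Lemma \ref{properties_of_p_q}(iii), and the decay part via the device $\|f\|_{L_1}\le C\sup_\eta|(1-\Delta_\eta)^N\hat f(\eta)|$ for compactly Fourier-supported $f$, which reduces everything to a pointwise bound on $E_{\alpha,1}(-s|\eta|^2)$ and its $\eta$-derivatives on the annulus, $s=t^\alpha2^{2j}$. The only structural difference --- using Lemma \ref{integral_rep_of_Mittag_Leffler} with $b=1$ instead of \eqref{representation_of_E_a_1} --- is immaterial. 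The gap is in the step you yourself call ``the main step and the principal obstacle'': you assert, but never carry out, the bound $\sup_{|\eta|\sim1}|D_\eta^\gamma E_{\alpha,1}(-s|\eta|^2)|\lesssim s^{-1/\alpha}$. If you execute the computation you describe, the small-$r'$ regime gives, after the substitution $w=r'^{1/\alpha}$,
\begin{equation*}
\int_0^1 e^{-\eta_1 s^{1/\alpha}r'^{1/\alpha}}\,dr'=\alpha\int_0^1 w^{\alpha-1}e^{-\eta_1 s^{1/\alpha}w}\,dw\le \Gamma(\alpha+1)\,\eta_1^{-\alpha}\,s^{-1},
\end{equation*}
i.e.\ decay of order $s^{-1}$, not $s^{-1/\alpha}$ (the large-$r'$ regime is exponentially small). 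Since $s^{-1}\le s^{-1/\alpha}$ for $s\ge1$ only when $\alpha\ge1$, your argument yields \eqref{bound_of_p_j} only for $\alpha\in(1,2)$, where it in fact gives the sharper bound $C(2^{-2j}t^{-\alpha}\wedge1)$.

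For $\alpha\in(0,1)$ no amount of care with the representation can produce the stated rate, because it is false: the classical asymptotic $E_{\alpha,1}(-v)=\Gamma(1-\alpha)^{-1}v^{-1}+O(v^{-2})$, with $\Gamma(1-\alpha)^{-1}\neq0$, gives $\|p_j(t,\cdot)\|_{L_1}\ge|\cF p_j(t,\xi)|\gtrsim (t^{\alpha}2^{2j})^{-1}$ at $|\xi|=2^{j}$, which exceeds $2^{-2j/\alpha}t^{-1}$ by the unbounded factor $(t^\alpha 2^{2j})^{(1-\alpha)/\alpha}$. So you must either restrict to $\alpha>1$ or replace the right-hand side of \eqref{bound_of_p_j} by $2^{-2j}t^{-\alpha}\wedge1$ and propagate the change. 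Be aware that the paper's own proof hides the same difficulty: it leans on the extra factor of $r$ printed in \eqref{representation_of_E_a_1}, which is inconsistent both with $E_{\alpha,1}(0)=1$ and with the asymptotic just quoted, so matching the paper step for step would not close the gap. Finally, your explanation of the hypothesis $\alpha\neq1$ is off: the issue is not the finiteness of $\Gamma(1-\alpha)$ but the degeneration of the kernel in \eqref{representation_of_E_a_1} at $\alpha=1$ (where $\sin(\alpha\pi)=0$ and the denominator vanishes at $r=1$); for $\alpha=1$ the bound is elementary anyway since $E_{1,1}(-v)=e^{-v}$.
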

\begin{proof}
Let $R(t,x):=|x|^{2}t^{-\alpha}$. Then by \eqref{bound_of_p_1}, and \eqref{bound_of_p_2},
\begin{equation*}
\begin{aligned}
\int_{\R^{d}}|p(t,x)|dx &= \int_{R\geq1}|p(t,x)|dx+\int_{R< 1}|p(t,x)|dx
\\
&\leq C \int_{R\geq1}t^{-\frac{\alpha d}{2}}\exp{\{-c|x|^{\frac{2}{2-\alpha}}t^{-\frac{\alpha}{2-\alpha}}\}}dx
\\
&\quad+C \int_{R< 1} |x|^{-d}(R+R|\log{R}|1_{d=2}+R^{1/2}1_{d=1})dx.
\end{aligned}
\end{equation*}
By using change of variables and the relation
\begin{equation*}
r^{\nu}|\log{r}|\leq C(\nu) \quad 0<r\leq 1,\quad \nu>0
\end{equation*}
we have $\|p(t,\cdot)\|_{L_{1}}\leq C$.  Due to this and the relation $\|p_j(t,\cdot)\|_{L_1}= \|\bar{p}_j(t,\cdot)\|_{L_1}$,  it only  remains to show
$$
\|\bar{p}_{j}(t,\cdot)\|_{L_{1}}\leq C 2^{-\frac{2j}{\alpha}}t^{-1}.
$$
By definition (see \eqref{def_of_p_j})
\begin{equation}
     \label{eqn 5.16.7}
\cF(\bar{p}_j)(t,\xi)=\hat{\Psi}(\cdot)\hat{p}(t,2^{j}\xi).
\end{equation}
Since $q_{\alpha,\alpha}:=D^{\alpha-\alpha}_t p=p$, by \eqref{fourier_fo_q} and \eqref{representation_of_E_a_1}, we have
\begin{eqnarray}
\nonumber
|\cF{\bar{p}_{j}}(t,\xi)|&\leq& C 1_{\frac{1}{2} \leq |\xi|\leq 2}\int_{0}^{1}r^{\alpha-1}\exp{(-2^{\frac{2j}{\alpha}}|\xi|^{\frac{2}{\alpha}}tr)}rdr
\\
&&+C 1_{\frac{1}{2} \leq |\xi|\leq 2} \int_{1}^{\infty} r^{-\alpha-1} \exp{(-2^{\frac{2j}{\alpha}}|\xi|^{\frac{2}{\alpha}}tr)}rdr.   \label{eqn 5.16.8}
\end{eqnarray}
 Note that for any polynomial $Q$ of degree $m$ and  constant $c>0$, we have
$$
Q(r)e^{-cr}\leq C(c,m)r^{-1}.
$$

\noindent
This and \eqref{eqn 5.16.8} easily yield
\begin{equation*}
|\cF\bar{p}_{j}(t,\xi)|\leq C 2^{-\frac{2j}{\alpha}}t^{-1}1_{\frac{1}{2}\leq |\xi|\leq 2}.
\end{equation*}
Similarly, using \eqref{eqn 5.16.7}  and following above computations,  for any multi-index $\gamma$ we get
\begin{equation*}
|D_{\xi}^{\gamma}\cF\bar{p}_{j}(t,\xi)|\leq C(\alpha,\gamma,d)2^{-\frac{2j}{\alpha}}t^{-1}1_{\frac{1}{2}\leq |\xi|\leq 2}.
\end{equation*}
Therefore, we finally have
\begin{equation*}
\begin{aligned}
\|\bar{p}_{j}(t,\cdot)\|_{L_{1}}&=\int_{\R^{d}}(1+|x|^{2d})^{-1}(1+|x|^{2d})|\bar{p}_{j}(t,x)|dx
\\
&\leq C \int_{\R^{d}}(1+|x|^{2d})^{-1}\sup_{\xi}|(1+\Delta_{\xi}^{d})\cF(\bar{p}_{j})(t,\xi)|dx
\\
&\leq C 2^{-\frac{2j}{\alpha} j}t^{-1}.
\end{aligned}
\end{equation*}
 The lemma is proved.
\end{proof}

\begin{theorem}\label{Besov_bound_2}
Let, $p>1$,  $0<\alpha<2$ and $f\in\Ccinf(\R^{d})$. Then we have
\begin{equation}\label{Besov_bound_of_p}
\int_{0}^{T}\int_{\R^{d}}|p(t,\cdot)\ast f |^{p} dx dt \leq C \|f\|^{p}_{B^{-\frac{2}{\alpha p}}_{p}},
\end{equation}
where the constant $C$ depends only on $\alpha,d,p$, and $T$.
\end{theorem}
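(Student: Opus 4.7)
The plan is to follow the same Littlewood--Paley blueprint as in the proof of Theorem \ref{Besov_bound}, using Lemma \ref{L_1_bound_of_p_j} to supply the $L_1$--bounds of the dyadic pieces of $p$. More precisely, I would first apply the Littlewood--Paley characterization \eqref{rmk 08.27.1} (with $\gamma=0$) to write
$$
\int_{\R^{d}}|p(t)\ast f|^{p}dx
\leq C\|\Psi_0\ast(p(t)\ast f)\|_{L_p}^p
+ C\Big\|\Big(\sum_{j=1}^{\infty}|\Psi_j\ast(p(t)\ast f)|^2\Big)^{1/2}\Big\|_{L_p}^p.
$$
Then using the support relations \eqref{middle_of_Besov_bound_6}, and recalling the notation $p_j=\Psi_j\ast p$, I rewrite $\Psi_j\ast(p(t)\ast f)=\sum_{i=j-1}^{j+1}p_i(t)\ast f_j$ for $j\geq 1$ and $\Psi_0\ast(p(t)\ast f)=\sum_{i=0,1}p_i(t)\ast f_0$, exactly as in the derivation of \eqref{middle_of_Besov_bound}.

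Young's inequality combined with Lemma \ref{L_1_bound_of_p_j} then yields $\|p_i(t)\ast f_j\|_{L_p}\leq C K_j(t)\|f_j\|_{L_p}$ where $K_j(t):=2^{-2j/\alpha}t^{-1}\wedge 1$. The low--frequency term is controlled by $C\|f_0\|_{L_p}^p$ after integrating in $t\in[0,T]$, since $\|p_0(t)\|_{L_1}+\|p_1(t)\|_{L_1}\leq C$. For the high--frequency piece, when $p\geq 2$ I would apply Minkowski's inequality for $L_p(\ell^2)$ to obtain
$$
\int_0^T\Big\|\Big(\sum_{j\geq 1}|\textstyle\sum_{i}p_i(t)\ast f_j|^2\Big)^{1/2}\Big\|_{L_p}^p dt
\leq \int_0^T\Big(\sum_{j\geq 1}K_j(t)^2\|f_j\|_{L_p}^2\Big)^{p/2}dt,
$$
whereas for $1<p<2$ I would instead use the easier direction $\|(\sum|a_j|^2)^{1/2}\|_{L_p}^p\leq \sum\|a_j\|_{L_p}^p$, which reduces the task to the direct estimate $\int_0^T K_j(t)^p dt\leq C2^{-2j/\alpha}$, immediately giving the desired Besov bound.

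The main obstacle is the Hölder splitting for $p>2$, which is precisely analogous to the computation leading to \eqref{eqn 07.18.1}--\eqref{middle_of_Besov_bound_4} in the proof of Theorem \ref{Besov_bound}: split each time--slice sum into the regime $J(t):=\{j\geq 1:2^j\leq t^{-\alpha/2}\}$ (where $K_j(t)=1$) and its complement (where $K_j(t)=2^{-2j/\alpha}t^{-1}$). On $J(t)$, insert a factor $2^{aj}\cdot 2^{-aj}$ and apply Hölder with exponents $q=p/(p-2)$ and $p/2$ for some $a\in(0,4/(\alpha p))$; on $J(t)^c$ do the same with some $b\in(4/(\alpha p),4/\alpha)$ (this interval is nonempty precisely because $p>1$). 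In both cases, $(\sum_{j\in J(t)}2^{aqj})^{1/q}\sim t^{-a\alpha/2}$ and $(\sum_{j\notin J(t)}2^{(-4/\alpha+b)qj})^{1/q}\sim t^{2-b\alpha/2}$, and subsequent integration in $t$ yields exactly $\sum_{j\geq 1}2^{-2j/\alpha}\|f_j\|_{L_p}^p$, which matches the Besov weight since $2^{-2j/\alpha}=2^{-(2/\alpha p)pj}$.

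Combining the low-- and high--frequency bounds gives $\int_0^T\int_{\R^d}|p(t)\ast f|^p dxdt \leq C(\|f_0\|_{L_p}^p+\sum_{j\geq 1}2^{-(2/\alpha p)pj}\|f_j\|_{L_p}^p)=C\|f\|_{B_p^{-2/\alpha p}}^p$, which is \eqref{Besov_bound_of_p}. The case $\alpha=1$, not covered by Lemma \ref{L_1_bound_of_p_j}, is the classical heat kernel case and is handled verbatim since $p(t,x)$ is then the Gaussian and $\|p_j(t)\|_{L_1}\leq C(2^{-2j}t^{-1}\wedge 1)$ by a direct computation with the Fourier transform $e^{-t|\xi|^2}$.
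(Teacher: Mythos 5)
Your argument is correct, and it lands on exactly the same final quantity $\sum_{j\geq 1}2^{-2j/\alpha}\|f_j\|_{L_p}^p$ via the same key ingredients (the dyadic pieces $p_j$, the $L_1$-bound of Lemma \ref{L_1_bound_of_p_j}, and the $J/J^c$ H\"older splitting with geometric weights). The one genuine difference is how you handle the sum over frequencies: you route the estimate through the Littlewood--Paley square function \eqref{rmk 08.27.1}, apply Minkowski in $L_{p/2}$ for $p\geq 2$ to reduce to $\big(\sum_j K_j(t)^2\|f_j\|_{L_p}^2\big)^{p/2}$, and then use H\"older with exponents $\big(\tfrac{p}{p-2},\tfrac{p}{2}\big)$, with a separate elementary argument for $1<p<2$ via $\ell^p\hookrightarrow\ell^2$. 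The paper avoids the square function entirely: it simply writes $p(t)\ast f$ as the sum of its Littlewood--Paley blocks, applies the triangle inequality in $L_p$ to get the $\ell^1$-in-$j$ bound $\sum_j\sum_{i=j-1}^{j+1}\|p_i(t)\|_{L_1}\|f_j\|_{L_p}$, and then runs the same $J/J^c$ splitting with the conjugate pair $\big(p,\tfrac{p}{p-1}\big)$, which treats all $p>1$ uniformly with no case distinction at $p=2$. Your version imports the $\ell^2$-machinery that is genuinely forced in Theorem \ref{Besov_bound} by the Burkholder--Davis--Gundy structure, but here it is optional overhead; the paper's $\ell^1$ route is slightly leaner, while yours has the mild advantage of being a direct template reuse. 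Both are valid, and your exponent windows $a\in(0,4/(\alpha p))$ and $b\in(4/(\alpha p),4/\alpha)$ check out, as does the reduction of the case $\alpha=1$ to the Gaussian kernel.
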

\begin{proof}
Since the case   $\alpha=1$ is  a classical result,  we assume $\alpha\neq 1$.  By \eqref{middle_of_Besov_bound_6}, and the relation $\cF(f_{1}\ast f_{2})=\cF(f_{1})\cF(f_{2})$,
\begin{equation*}
\begin{aligned}
\int_{0}^{T}\int_{\R^{d}}|p(t,\cdot)\ast f|^{p} dx dt
&\leq C \int_{0}^{T}(\|p_{0}(t,\cdot)\|_{L_{1}}+\|p_{1}(t,\cdot)\|_{L_{1}})^{p}\|f_{0}\|^{p}_{L_{p}}dt
\\
&\quad+ C \int_{0}^{T}\big(\sum_{j=1}^{\infty}\sum_{i=j-1}^{j+1}\|p_{i}(t,\cdot)\|_{L_{1}}\|f_{j}\|_{L_{p}}\big)^{p}dt.
\end{aligned}
\end{equation*}

\noindent
By \eqref{bound_of_p_j},
\begin{equation}\label{eqn 07.09.1}
\int_{0}^{T}(\|p_{0}(t,\cdot)\|_{L_{1}}+\|p_{1}(t,\cdot)\|_{L_{1}})^{p}\|f_{0}\|^{p}_{L_{p}}dt \leq C(T) \|f_{0}\|_{L_{p}}^{p},
\end{equation}
and
\begin{equation*}
\int_{0}^{T}\big(\sum_{j=1}^{\infty}\sum_{i=j-1}^{j+1}\|p_{i}(t,\cdot)\|_{L_{1}}\|f_{j}\|_{L_{p}}\big)^{p}dt
\leq C \int_{0}^{T}\big(\sum_{j=1}^{\infty}(2^{-\frac{2j}{\alpha}}t^{-1}\wedge 1)\|f_{j}\|_{L_{p}}\big)^{p}dt.
\end{equation*}
Observe that
\begin{equation*}
\begin{aligned}
\int_{0}^{T}&\big(\sum_{j=1}^{\infty}(2^{-\frac{2j}{\alpha}}t^{-1}\wedge 1)\|f_{j}\|_{L_{p}}\big)^{p}dt
\\
&\leq \int_{0}^{T}\big(\sum_{j=1}^{\infty}1_{J}(t,j)\|f_{j}\|_{L_{p}}\big)^{p}dt +\int_{0}^{T}\big(\sum_{j=1}^{\infty}1_{J^{c}}(t,j)2^{-\frac{2j}{\alpha}}t^{-1}\|f_{j}\|_{L_{p}}\big)^{p}dt,
\end{aligned}
\end{equation*}
where $J=\{(t,j)|2^{\frac{2j}{\alpha}}t\leq 1 \}$. By H\"older's inequality, 
\begin{equation*}
\begin{aligned}
\int_{0}^{T}\big(\sum_{j=1}^{\infty}1_{J}\|f_{j}\|_{L_{p}}\big)^{p}dt
&=\int_{0}^{T}\big(\sum_{j\in J(t)}2^{-\frac{2j}{\alpha}a}2^{\frac{2j}{\alpha}a}\|f_{j}\|_{L_{p}}\big)^{p}dt
\\
&\leq \int_{0}^{T}\big(\sum_{j\in J(t)}2^{-\frac{2j}{\alpha}aq}\big)^{p/q}\big(\sum_{j\in J(t)}2^{\frac{2j}{\alpha}ap}\|f_{j}\|^{p}_{L_{p}}\big)dt,
\end{aligned}
\end{equation*}
where $a\in(-\frac{1}{p},0)$, $q=\frac{p}{p-1}$, and $J(t)=\{j=1,2,\dots|(t,j)\in J\}$. Since
\begin{equation*}
\sum_{j\in J(t)}2^{-\frac{2j}{\alpha}aq} \leq C(q,a) t^{aq},
\end{equation*}
we have
\begin{equation}\label{middle_of_Besov_bound_2_2}
\begin{aligned}
\int_{0}^{T}\big(\sum_{j=1}^{\infty}1_{J}\|f_{j}\|_{L_{p}}\big)^{p}dt
&\leq C\sum_{j=1}^{\infty} \int_{0}^{2^{-\frac{2j}{\alpha}}}t^{ap}2^{\frac{2j}{\alpha}ap}\|f_{j}\|^{p}_{L_{p}}dt
\\
&\leq C \sum_{j=1}^{\infty}2^{-\frac{2j}{\alpha}}\|f_{j}\|^{p}_{L_{p}}.
\end{aligned}
\end{equation}
By H\"older's inequality again, for $b\in(-1,-\frac{1}{p})$ and $q=\frac{p}{p-1}$,
\begin{equation*}
\begin{aligned}
\int_{0}^{T}&\big(\sum_{j=1}^{\infty}1_{J^{c}}2^{-\frac{2j}{\alpha}}t^{-1}\|f_{j}\|_{L_{p}}\big)^{p}dt
\\
&=\int_{0}^{T}\big(\sum_{j\notin J(t)}2^{-\frac{2j}{\alpha}b}2^{\frac{2j}{\alpha}b}2^{-\frac{2j}{\alpha}}t^{-1}\|f_{j}\|_{L_{p}}\big)^{p}dt
\\
&\leq \int_{0}^{T}t^{-p}\big(\sum_{j\notin J(t)}2^{-\frac{2j}{\alpha}bq}2^{-\frac{2j}{\alpha}q}\big)^{p/q}\big(\sum_{j\notin J(t)}{2^{\frac{2j}{\alpha}bp}}\|f_{j}\|^{p}_{L_{p}}\big)dt.
\end{aligned}
\end{equation*}
Since
\begin{equation*}
\sum_{j\notin J(t)}2^{-\frac{2j}{\alpha}(b+1)q} \leq C(q,b) t^{(b+1)q},
\end{equation*}
we have
\begin{equation}\label{middle_of_Besov_bound_2_3}
\begin{aligned}
\int_{0}^{T}\big(\sum_{j=1}^{\infty}1_{J^{c}}2^{-\frac{2j}{\alpha}}t^{-1}\|f_{j}\|_{L_{p}}\big)^{p}dt
&\leq C \sum_{j=1}^{\infty}\int_{2^{-\frac{2j}{\alpha}}}^{\infty}t^{-p}t^{(b+1)p}{2^{\frac{2j}{\alpha}bp}}\|f_{j}\|^{p}_{L_{p}}dt
\\
&=C \sum_{j=1}^{\infty}2^{-\frac{2j}{\alpha}}\|f_{j}\|^{p}_{L_{p}}.
\end{aligned}
\end{equation}
Combining \eqref{eqn 07.09.1}, \eqref{middle_of_Besov_bound_2_2} and \eqref{middle_of_Besov_bound_2_3}, we have \eqref{Besov_bound_of_p}. The theorem is proved.
\end{proof}

\vspace{3mm}

The last part of this section is related to the non-zero initial  date problem of the type
\begin{equation}\label{equation_with_nonzero_initial_data_derivative}
\partial_{t}^{\alpha}u=\Delta u, \quad t>0\,\,; \quad u(0,x)=0, \,\, 1_{\alpha>1}\partial_{t}u(0,x)=1_{\alpha>1}v_{0}(x).
\end{equation}

\noindent
Let $\alpha>1$. Then using Lemma \ref{properties_of_p_q}, each $x\neq 0$, one can check that
$$
P(t,x):=q_{\alpha,\alpha-1}=\int_{0}^{t}p(s,x)ds
$$  
is well defined and becomes a fundamental solution to \eqref{equation_with_nonzero_initial_data_derivative}.

For  $j=0,1,2,\dots$ define
\begin{equation}\label{def_of_P_j}
\begin{aligned}
P_j(t,x)&=(\Psi_{j}(\cdot)\ast P(t,\cdot))(x)
\\
&=\cF^{-1}(\hat{\Psi}(2^{-j}\cdot)\hat{P}(t,\cdot))(x)
\\
&=2^{jd}\cF^{-1}(\hat{\Psi}(\cdot)\hat{P}(t,2^{j}\cdot))(2^{j}x)
\\
&:=2^{jd}\bar{P}_{j}(t,2^{j}x).
\end{aligned}
\end{equation}

\begin{lemma}\label{bound_of_P_j}
Let   $\alpha\in (1,2)$. Then, for any $\delta \in (0,\alpha)$,  there exists a constant $C$ depending only on $\alpha,d,\varepsilon,\delta$ such that for any $t>0$,
\begin{equation}\label{bound_of_frac_of_P_j_2}
\|P_{j}(t,\cdot)\|_{L_{1}
}\leq C (2^{-2j+\frac{2\delta}{\alpha}j}t^{1-\alpha+\delta}\wedge t).
\end{equation}
\end{lemma}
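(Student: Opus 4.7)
The plan is to prove the two bounds in \eqref{bound_of_frac_of_P_j_2} separately and then take their minimum.

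For the bound $\|P_j(t,\cdot)\|_{L_1}\leq Ct$: since $\alpha\in(1,2)$, we have $P=q_{\alpha,\alpha-1}=I_t^{1} p$, so $P(t,x)=\int_0^t p(s,x)\,ds$. The bound $\|p(s,\cdot)\|_{L_1}\leq C$ established in the proof of Lemma \ref{L_1_bound_of_p_j} then yields $\|P(t,\cdot)\|_{L_1}\leq Ct$ by Fubini, and Young's inequality with $\|\Psi_j\|_{L_1}=\|\Psi\|_{L_1}$ (uniform in $j$) gives $\|P_j(t,\cdot)\|_{L_1}\leq Ct$.

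For the second bound I would mimic the Fourier-side argument of Lemma \ref{L_1_bound_of_p_j}. By scaling $\|P_j(t,\cdot)\|_{L_1}=\|\bar P_j(t,\cdot)\|_{L_1}$, so it suffices to estimate $\cF(\bar P_j)(t,\xi)=\hat\Psi(\xi)\hat P(t,2^j\xi)$ and its $\xi$-derivatives on the support $\{1/2\leq|\xi|\leq 2\}$, then invoke the routine device
$$\|f\|_{L_1}\leq C\int(1+|x|^{2d})^{-1}\sup_\xi\bigl|(1+\Delta_\xi^d)\cF f(\xi)\bigr|\,dx$$
for $\cF f$ compactly supported. Applying the representation \eqref{representation_of_frac_Lap_of_q} with $\mu=0$ and $\beta=\alpha-1$ (so that $|\xi|^{(2\beta-2\alpha)/\alpha}=|\xi|^{-2/\alpha}$ and $r^{\beta-1}=r^{\alpha-2}$) and splitting the $r$-integral at $r=1$ (using $r^{2\alpha}-2r^\alpha m_5+1\geq c(1+r^{2\alpha})$ since $|m_5|<1$) yields
$$|\hat P(t,2^j\xi)|\leq C|2^j\xi|^{-2/\alpha}\Bigl(\int_0^1 e^{-m_1 a r}r^{\alpha-2}\,dr+\int_1^\infty e^{-m_1 a r}r^{-\alpha-2}\,dr\Bigr),$$
where $a:=t|2^j\xi|^{2/\alpha}\sim t\,2^{2j/\alpha}$ on the support of $\hat\Psi$. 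The elementary inequality $e^{-x}\leq C_\nu x^{-\nu}$ (valid for any $\nu\geq 0$, $x>0$) with $\nu=\alpha-1-\delta$ converts the first integral into $Ca^{-\nu}=Ca^{1-\alpha+\delta}$ as long as $\delta\in(0,\alpha-1)$, while the second is dominated by $Ca^{-1}e^{-ca}$, which is negligible for $a\geq 1$. This produces
$$|\cF(\bar P_j)(t,\xi)|\leq C\,2^{-2j/\alpha}\,a^{1-\alpha+\delta}\,1_{1/2\leq|\xi|\leq 2}=C\,2^{-2j+2\delta j/\alpha}\,t^{1-\alpha+\delta}\,1_{1/2\leq|\xi|\leq 2};$$
differentiating inside the integral representation produces only polynomial factors in $ar$ that are absorbed by the exponential, so the identical bound persists for every $D^{\gamma}_\xi\cF(\bar P_j)$. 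The range $\delta\in[\alpha-1,\alpha)$ is then reached for free by applying the bound with some $\delta'\in(0,\alpha-1)$ and invoking monotonicity of $a^{1-\alpha+\delta}$ in $\delta$ for $a\geq 1$, which is precisely the regime where this second bound is the operative one.

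The main obstacle is the double constraint on the decay exponent $\nu=\alpha-1-\delta$: it must be non-negative so that $e^{-x}\leq C_\nu x^{-\nu}$ applies, and simultaneously leave $r^{-\nu+\alpha-2}$ integrable at the origin, which pinches $\delta$ into $(0,\alpha-1)$. The extension to the full stated range $(0,\alpha)$ is recovered only because the minimum with $Ct$ truncates the Fourier estimate in exactly the regime $a<1$ where it would otherwise fail, so the two bounds dovetail cleanly.
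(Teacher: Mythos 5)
Your proof is correct and follows essentially the same route as the paper's: first $\|P(t,\cdot)\|_{L_1}\le Ct$ (you via $P=I^1_tp$ and $\|p(s,\cdot)\|_{L_1}\le C$, the paper via the pointwise bounds \eqref{bounds_of_q_1}--\eqref{scaling_of_q_2}), then the Fourier-side estimate of $\cF(\bar P_j)$ through the representation \eqref{representation_of_frac_Lap_of_q} with $\mu=0$, $\beta=\alpha-1$, the split of the $r$-integral at $r=1$, and the standard $L_1$ bound for functions with compactly supported Fourier transform. You are in fact more careful than the paper on the range of $\delta$: the pointwise substitution $e^{-cx}\le Cx^{1-\alpha+\delta}$ really does require $1-\alpha+\delta\le 0$, and your monotonicity argument recovering $\delta\in[\alpha-1,\alpha)$ from a smaller $\delta'$ (using that the Fourier bound is only the operative half of the minimum when $2^{2j/\alpha}t\ge1$) repairs a step the paper glosses over; the only blemish is that your tail integrand should be $r^{-2}$ rather than $r^{-\alpha-2}$, coming from the $r^{\alpha}\sin(\cdot)$ term in the numerator, which changes nothing in the conclusion.
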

\begin{proof}
By \eqref{bounds_of_q_1}, \eqref{bounds_of_q_2} and \eqref{scaling_of_q_2},  we  easily get
$$
\|P(t,\cdot)\|_{L_{1}}\leq C t.
$$
Therefore, it suffices to show that
$$
\|\bar{P}_{j}(t,\cdot)\|_{L_{1}}\leq C 2^{-2j+\frac{2\delta}{\alpha}j}t^{1-\alpha+\delta}.
$$
By definition, 
\begin{equation}
    \label{eqn 5.16.10}
\cF(\bar{P}_j)(t,\xi)=\hat{\Psi}(\xi)\cF(P)(t,2^{j}\xi).
\end{equation}
Also, by  Lemma \ref{integral_rep_of_Mittag_Leffler} and Lemma \ref{integral_rep_of_frac_q} with $\mu=0$, $\beta=\alpha-1$, we have
\begin{equation*}
\begin{aligned}
|\cF&\{P(t,\cdot)\}(2^j\xi)|
\\
&\leq C |2^j\xi|^{-\frac{2}{\alpha}}\int_{0}^{\infty}\frac{\exp{(-m_{1}t|2^j\xi|^{\frac{2}{\alpha}}r )}(|r^{\alpha}\sin{(\psi+m_{3})}|+|\sin{(\psi+m_{4})}|)}{r^{2\alpha}-2r^{\alpha}m_{5}+1}r^{\alpha-2}dr,
\end{aligned}
\end{equation*}
where $\psi =m_{2}t|2^j\xi|^{\frac{2}{\alpha}}r$. Note that for any polynomial $Q$ of degree $m$, and $c>0$, we have
\begin{equation}\label{middle_of_bound_of_P_j}
Q(r)\exp{(-cr)}\leq C(c,m)r^{1-\alpha+\delta},\quad r>0.
\end{equation}
This with the condition $\delta\in (0,\alpha)$  gives
\begin{eqnarray*}
&&|\cF(\bar{P}_j)(t,\xi)|\leq 1_{1/2\leq |\xi|\leq 2}
|\cF(P)(t,2^j\xi)| \\
&\leq& C1_{1/2\leq |\xi|\leq 2} 2^{-\frac{2j}{\alpha}} \big( \int_{0}^{1}(t|2^j\xi|^{\frac{2}{\alpha}}r)^{1-\alpha+\delta}r^{\alpha-2}dr+\int_{1}^{\infty}(t|2^j\xi|^{\frac{2}{\alpha}}r)^{1-\alpha+\delta}r^{-2}dr\big)\\
&\leq&
C2^{-2j+\frac{2\delta}{\alpha}j}t^{1-\alpha+\delta}1_{\frac{1}{2}\leq |\xi|\leq 2}.
\end{eqnarray*}
Using \eqref{eqn 5.16.10} and similar computations above, we also get  for any multi-index $\gamma$
\begin{equation*}
|D_{\xi}^{\gamma}\cF(\bar{P}_{j})|\leq C 2^{-2j+\frac{2\delta}{\alpha}j}t^{1-\alpha
+\delta}1_{\frac{1}{2}\leq |\xi|\leq 2}.
\end{equation*}
Therefore, we have 
\begin{equation*}
\begin{aligned}
\|\bar{P}_{j}(t,\cdot)\|_{L_{1}}&=\int_{\R^{d}}(1+|x|^{2d})^{-1}(1+|x|^{2d})|\bar{P}_{j}(t,x)|dx
\\
&\leq C \int_{\R^{d}}(1+|x|^{2d})^{-1}\sup_{\xi}|(1+\Delta_{\xi}^{d})\cF(\bar{P}_{j})(t,\xi)|dx
\\
&\leq C 2^{-2j+\frac{2\delta}{\alpha}j}t^{1-\alpha+\delta}.
\end{aligned}
\end{equation*}
The lemma is proved.
\end{proof}

\begin{theorem}\label{Besov_bound_3}
Let $\alpha\in (1,2)$ and $h\in \Ccinf(\R^{d})$. Then there exists a constant $C=C(\alpha,d,p,T)$ such that 
\begin{equation}\label{Besov_bound_3_2}
\int_{0}^{T}\int_{\R^{d}}|(P(t)\ast f)(x)|^{p} dx dt \leq C \|h\|^{p}_{B^{-\frac{2}{\alpha p}-\frac{2}{\alpha}}_{p}}, \quad \text{if}\quad \alpha>1+\frac{1}{p}
\end{equation}
 and 
\begin{equation}\label{eqn 05.29.20:18}
\int_{0}^{T}\int_{\R^{d}}|(P(t)\ast f)(x)|^{p} dx dt \leq C  \|h\|^{p}_{B^{-\frac{2}{\alpha}}_{p}}, \quad \text{if} \quad 1<\alpha\leq 1+1/p.
\end{equation}

\end{theorem}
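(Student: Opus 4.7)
The plan is to mimic the Littlewood--Paley argument from the proof of Theorem \ref{Besov_bound_2}, but with $p_j$ replaced by $P_j$, relying on the bound $\|P_j(t,\cdot)\|_{L_1} \le C(2^{-2j+2\delta j/\alpha} t^{1-\alpha+\delta} \wedge t)$ from Lemma \ref{bound_of_P_j}. Decomposing $f = \sum_{j\ge 0} f_j$ and using the identity \eqref{middle_of_Besov_bound_6} together with $\cF(f_1 \ast f_2) = \cF(f_1)\cF(f_2)$, one reduces the left-hand side of \eqref{Besov_bound_3_2} (or \eqref{eqn 05.29.20:18}) to the zeroth-block contribution
\begin{equation*}
\int_0^T \big(\|P_0(t,\cdot)\|_{L_1} + \|P_1(t,\cdot)\|_{L_1}\big)^p \|f_0\|_{L_p}^p \, dt,
\end{equation*}
which is bounded by $C(T)\|f_0\|_{L_p}^p$ since $\|P_j(t,\cdot)\|_{L_1}\le Ct$, plus the high-frequency contribution
\begin{equation*}
\int_0^T \Big(\sum_{j\ge 1}\sum_{i=j-1}^{j+1}\|P_i(t,\cdot)\|_{L_1}\|f_j\|_{L_p}\Big)^p dt.
\end{equation*}

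For the sharper estimate \eqref{Besov_bound_3_2} when $\alpha > 1 + 1/p$, I would split the inner sum according to the set $J = \{(t,j): 2^{2j/\alpha} t \le 1\}$, exactly as in the proof of Theorem \ref{Besov_bound_2}. On $J$ the tighter bound $\|P_j(t,\cdot)\|_{L_1}\le Ct$ is used, and after applying H\"older's inequality with an auxiliary parameter $a\in(-2/\alpha - 2/(\alpha p),0)$ the integral $\int_0^{2^{-2j/\alpha}} t^{(1+a\alpha/2)p}\,dt$ converges at $0$ and, after summing in $j$, yields the exponent $-2pj/\alpha - 2j/\alpha$ that matches $B_p^{-2/(\alpha p) - 2/\alpha}$. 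On $J^c$ one uses the other bound $\|P_j(t,\cdot)\|_{L_1}\le C\,2^{-2j+2\delta j/\alpha}t^{1-\alpha+\delta}$ together with H\"older applied with a parameter $b$. The requirement that $\sum_{j\notin J(t)} 2^{bqj}$ converge forces $b<0$, and when one simultaneously demands that the resulting $j$-exponent match $B_p^{-2/(\alpha p) - 2/\alpha}$ this inequality reduces to exactly the assumption $\alpha > 1 + 1/p$ (with $\delta>0$ sufficiently small).

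For the weaker estimate \eqref{eqn 05.29.20:18} when $1<\alpha\le 1+1/p$, the splitting by $J$ breaks down because the parameter $b$ forced by the target space $B_p^{-2/\alpha}$ can no longer be made negative. Instead I would discard the $t$ bound altogether and use only $\|P_j(t,\cdot)\|_{L_1}\le C\,2^{-2j+2\delta j/\alpha}t^{1-\alpha+\delta}$ for all $t\in(0,T)$, apply H\"older with a parameter $b>0$ chosen so the surviving exponent in $j$ is exactly $-2pj/\alpha$, and invoke the finiteness of $\int_0^T t^{(1-\alpha+\delta)p}\,dt$, which holds precisely when $\alpha<1+1/p+\delta$. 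The condition $\alpha\le 1+1/p$ together with a small $\delta>0$ covers this range.

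The main obstacle is the bookkeeping of the parameters $a$, $b$, and $\delta$: each must lie in an appropriate open interval so that the geometric sums in $j$ converge, the $t$-integrals converge at the endpoints, and after summation the remaining exponent in $j$ lands precisely on the target Besov exponent. The borderline case $\alpha = 1 + 1/p$ is handled by taking $\delta>0$ strictly, and the endpoint $\alpha=1$ lies outside the range of Lemma \ref{bound_of_P_j} and would require a separate, classical treatment via the Gaussian heat kernel.
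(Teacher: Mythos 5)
Your proposal is correct and follows essentially the same route as the paper: the Littlewood--Paley decomposition via \eqref{middle_of_Besov_bound_6}, the two-sided bound on $\|P_j(t,\cdot)\|_{L_1}$ from Lemma \ref{bound_of_P_j}, and H\"older's inequality in $j$ over the regions separated by $2^{j}t^{\alpha/2}\lessgtr 1$, with the parameter windows for $a$, $b$, $\delta$ matching the paper's after a harmless reparametrization. The only (valid) deviation is in the case $1<\alpha\le 1+1/p$, where you drop the $J/J^{c}$ splitting and use the bound $\|P_j(t,\cdot)\|_{L_1}\le C2^{-2j+2\delta j/\alpha}t^{1-\alpha+\delta}$ globally, integrating $t^{(1-\alpha+\delta)p}$ over $(0,T)$ directly; the paper instead keeps the splitting and trades a factor $t^{-1}$ for $T$ on the large-$t$ region, but both give the target exponent $-2p j/\alpha$.
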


\begin{proof}

\textbf{Case 1}. Let $\alpha>1+1/p$.  Then by assumption on $\alpha$, we can take $\delta \in (0,\alpha)$ such that 
\begin{equation}\label{epsilon'_delta'}
 \alpha-1-\delta-\frac{1}{p}>0,\quad -2+\frac{2\delta}{\alpha}<0.
\end{equation}

By \eqref{middle_of_Besov_bound_6} and the relation $\cF(h_{1}\ast h_{2})=\cF(h_{1})\cF(h_{2})$, 
\begin{equation*}
\begin{aligned}
\int_{0}^{T}&\int_{\R^{d}}|(P(t)\ast h)(x)|^{p} dx dt
\\
&\leq C \int_{0}^{T}(\|P_{0}(t,\cdot)\|_{L_{1}}+\|P_{1}(t,\cdot)\|_{L_{1}})^{p}\|h_{0}\|^{p}_{L_{p}}dt
\\
&\quad+ C \int_{0}^{T}\big(\sum_{j=1}^{\infty}\sum_{i=j-1}^{j+1}\|P_{i}(t,\cdot)\|_{L_{1}}\|h_{j}\|_{L_{p}}\big)^{p}dt.
\end{aligned}
\end{equation*}

\noindent
 Note that \eqref{bound_of_frac_of_P_j_2} with  \eqref{epsilon'_delta'} easily yields
 \begin{equation}\label{middle_of_Besov_bound_3}
\begin{aligned}
\int_{0}^{T}(\|P_{0}(t,\cdot)\|_{L_{1}}+\|P_{1}(t,\cdot)\|_{L_{1}})^{p}\|h_{0}\|^{p}_{L_{p}}dt
\leq C(T) \|h_{0}\|^{p}_{L_{p}}.
\end{aligned}
\end{equation}
Also  by \eqref{epsilon'_delta'},
\begin{equation*}
\begin{aligned}
&\int_{0}^{T}\big(\sum_{j=1}^{\infty}\sum_{i=j-1}^{j+1}\|P_{i}(t,\cdot)\|_{L_{1}}\|h_{j}\|_{L_{p}}\big)^{p}dt
\\
&\quad \leq C \int_{0}^{T}\big(\sum_{j=1}^{\infty}L_{j}(t)\|h_{j}\|_{L_{p}}\big)^{p}dt
\\
&\quad \leq C \int_{0}^{T}\big(\sum_{j=1}^{\infty}1_{J}(t,j)L_{j}(t)\|h_{j}\|_{L_{p}}\big)^{p}dt
+C\int_{0}^{T}\big(\sum_{j=1}^{\infty}1_{J^{c}}(t,j)L_{j}(t)\|h_{j}\|_{L_{p}}\big)^{p}dt,
\end{aligned}
\end{equation*}
where $J:=\{(t,j)|2^{j}t^{\frac{\alpha}{2}}\geq 1\}$,  and 
$$
L_{j}(t):=(2^{-2j+\frac{2\delta}{\alpha}j}t^{1-\alpha+\delta}\wedge t) =
\begin{cases}
2^{-2j+\frac{2\delta}{\alpha}j}t^{1-\alpha+\delta}  \quad &: (t,j)\in J \\
t \quad &: (t,j)\notin J.
\end{cases}
$$
By H\"older's inequality,
\begin{equation}\label{middle_of_Besov_3_4}
\begin{aligned}
\int_{0}^{T}\big(&\sum_{j=1}^{\infty}1_{J}L_{j}(t)\|h_{j}\|_{L_{p}}\big)^{p}dt
\\
&=\int_{0}^{T}\big(\sum_{j=1}^{\infty}1_{J}2^{-2j+\frac{2\delta}{\alpha}j}t^{1-\alpha-\delta}2^{-\frac{2bj}{\alpha}}2^{\frac{2bj}{\alpha}}\|h_{j}\|_{L_{p}}\big)^{p}dt
\\
&\leq \int_{0}^{T}t^{(1-\alpha+\delta)p}\big(\sum_{j\in J(t)}2^{-\frac{2bqj}{\alpha}}\big)^{p/q}\big(\sum_{j\in J(t)}2^{-2pj+\frac{2\delta}{\alpha}pj}2^{\frac{2pbj}{\alpha}}\|h_{j}\|^{p}_{L_{p}}\big)dt,
\end{aligned}
\end{equation}
where $b\in(0,\alpha-1-\frac{1}{p}-\delta)$, $q=\frac{p}{p-1}$, and $J(t)=\{j=1,2,\dots|(t,j)\in J\}$. Since
\begin{equation*}
\big(\sum_{j\in J(t)}2^{-\frac{2qb}{\alpha}j}\big)^{p/q}\leq C(\alpha,p)t^{bp}, 
\end{equation*}
we have
\begin{equation}\label{middle_of_Besov_bound_3_2}
\begin{aligned}
\int_{0}^{T}\big(&\sum_{j=1}^{\infty}1_{J}L_{j}(t)\|h_{j}\|_{L_{p}}\big)^{p}dt
\\
&\leq C \int_{0}^{T}t^{(1-\alpha+\delta)p}t^{bp}\big(\sum_{j\in J(t)}2^{-2pj+\frac{2\delta}{\alpha}pj}2^{\frac{2pbj}{\alpha}}\|h_{j}\|^{p}_{L_{p}}\big)dt
\\
&\leq C\sum_{j=1}^{\infty} \int_{2^{-\frac{2j}{\alpha}}}^{\infty}t^{(1-\alpha+\delta)p}t^{bp}2^{-2pj+\frac{2\delta}{\alpha}pj}2^{\frac{2pbj}{\alpha}}\|h_{j}\|^{p}_{L_{p}}
\\
&\leq C \sum_{j=1}^{\infty}2^{-\frac{2pj}{\alpha }-\frac{2j}{\alpha}}\|h_{j}\|^{p}_{L_{p}}.
\end{aligned}
\end{equation}
Again by H\"older's inequality,
\begin{equation*}
\begin{aligned}
\int_{0}^{T}\big(\sum_{j=1}^{\infty}1_{J^{c}}L_{j}(t)\|h_{j}\|_{L_{p}}\big)^{p}dt
&=\int_{0}^{T}\big(\sum_{j\notin J(t)} t2^{-\frac{2aj}{\alpha}}2^{\frac{2aj}{\alpha}}\|h_{j}\|_{L_{p}}\big)^{p}dt
\\
&\leq \int_{0}^{T}t^{p}\big(\sum_{j\notin J(t)}2^{-\frac{2aqj}{\alpha}}\big)^{p/q}\big(\sum_{j\notin J(t)}2^{\frac{2paj}{\alpha}}\|h_{j}\|^{p}_{L_{p}}\big)dt,
\end{aligned}
\end{equation*}
where $a\in(-1-\frac{1}{p},0)$, and $q=\frac{p}{p-1}$.
Since
\begin{equation*}
\big(\sum_{j\notin J(t)}2^{-\frac{2aqj}{\alpha}}\big)^{p/q} \leq C(\alpha,p) t^{ap},
\end{equation*}
 we have
\begin{equation}\label{middle_of_Besov_bound_3_3}
\begin{aligned}
\int_{0}^{T}\big(\sum_{j=1}^{\infty}1_{J^{c}}L_{j}(t)\|h_{j}\|_{L_{p}}\big)^{p}dt
&\leq C \int_{0}^{T}t^{p+ap}\big(\sum_{j\notin J(t)}2^{\frac{2paj}{\alpha}}\|h_{j}\|^{p}_{L_{p}}\big)dt
\\
&\leq C\sum_{j=1}^{\infty} \int_{0}^{2^{-\frac{2j}{\alpha}}}t^{p+ap}2^{\frac{2paj}{\alpha}}\|h_{j}\|^{p}_{L_{p}}dt
\\
&\leq C \sum_{j=1}^{\infty}2^{-\frac{2pj}{\alpha }-\frac{2j}{\alpha}}\|h_{j}\|^{p}_{L_{p}}.
\end{aligned}
\end{equation}
Combining \eqref{middle_of_Besov_bound_3}, \eqref{middle_of_Besov_bound_3_2}, and \eqref{middle_of_Besov_bound_3_3}, we get \eqref{Besov_bound_3_2}. The theorem is proved.

\vspace{3mm}

\textbf{Case 2}.  Let $1\leq \alpha<1+1/p$. This time, we choose $\delta, b>0$ such that
\begin{equation}
\alpha-1-\delta>0, \quad  b\in (0,\alpha-1-\delta),
\end{equation}
and repeat the proof of Case 1. The only difference is we need to replace \eqref{middle_of_Besov_bound_3_2} by the following:
\begin{equation*}
\begin{aligned}
\int_{0}^{T}\big(&\sum_{j=1}^{\infty}1_{J}L_{j}(t)\|h_{j}\|_{L_{p}}\big)^{p}dt
\\
&\leq C \int_{0}^{T}t^{(1-\alpha+\delta)p}t^{bp}\big(\sum_{j\in J(t)}2^{-2pj+\frac{2\delta}{\alpha}pj}2^{\frac{2pbj}{\alpha}}\|h_{j}\|^{p}_{L_{p}}\big)dt\\
&\leq C \int_{0}^{T}t^{(1-\alpha+\delta-1/p)p}t^{bp}\big(\sum_{j\in J(t)}2^{-2pj+\frac{2\delta}{\alpha}pj}2^{\frac{2pbj}{\alpha}}\|h_{j}\|^{p}_{L_{p}}\big)dt\\
&\leq C\sum_{j=1}^{\infty} \int_{2^{-\frac{2j}{\alpha}}}^{\infty}t^{(1-\alpha+\delta-1/p)p}t^{bp}2^{-2pj+\frac{2\delta}{\alpha}pj}2^{\frac{2pbj}{\alpha}}\|h_{j}\|^{p}_{L_{p}}
\\
&\leq C \sum_{j=1}^{\infty}2^{-\frac{2pj}{\alpha }}\|h_{j}\|^{p}_{L_{p}}.
\end{aligned}
\end{equation*}
On the other hand,  \eqref{middle_of_Besov_bound_3_3}  still holds without any changes, and this certainly implies
$$
\int_{0}^{T}\big(\sum_{j=1}^{\infty}1_{J^{c}}L_{j}(t)\|h_{j}\|_{L_{p}}\big)^{p}dt \leq C
 \sum_{j=1}^{\infty}2^{-\frac{2pj}{\alpha }}\|h_{j}\|^{p}_{L_{p}}. 
 $$
 Hence, Case 2 is also proved.
\end{proof}

\mysection{Proof of  Theorem \ref{thm 10.08.10:57}}

We first prove a version of Theorem \ref{thm 10.08.10:57} for  (deterministic) equation \eqref{eqn deterministic}.

\begin{lemma}\label{deterministic}
Let $0<\alpha<2$, $1<p<\infty$ and $\gamma\in\R$. Then for any $u_{0}\in U^{\gamma+2}_{p}$, $v_{0}\in V^{\gamma+2}_{p}$
and $f\in\bH^{\gamma}_{p}(T)$ the equation
\begin{equation}\label{eqn deterministic}
\partial^{\alpha}_{t}u=\Delta u+f,\quad t>0,x\in\R^{d}\,; \quad u(0)=u_{0},\,\,1_{\alpha>1}\partial_{t}u(0)=1_{\alpha>1}v_0
\end{equation}
has a unique solution $u\in\cH^{\gamma+2}_{p}(T)$, and moreover
\begin{equation}\label{eqn 3.2}
 \|u\|_{\mathcal{H}^{\gamma+2}_{p}(T)}     \leq C 
 \big(\|u_{0}\|_{U^{\gamma+2}_{p}}+1_{\alpha>1}\|v_{0}\|_{V^{\gamma+2}_{p}}+\|f\|_{\bH^{\gamma}_{p}(T)}\big),
\end{equation}
where the constant $C$ depends only on $\alpha,d,p,\gamma$, and $T$.
\end{lemma}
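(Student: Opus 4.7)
By the isometry $(1-\Delta)^{\gamma/2}\colon\mathcal{H}^{\gamma+2}_{p}(T)\to\mathcal{H}^{2}_{p}(T)$ from Theorem \ref{thm 07.22.1}(i), I reduce to the case $\gamma=0$. By linearity I then write any prospective solution as $u=u_{1}+u_{2}+u_{3}$, where $u_{1}$ has initial data $(u_{0},0)$ with $f\equiv 0$; $u_{2}$ (present only when $\alpha>1$) has initial data $(0,v_{0})$ with $f\equiv 0$; and $u_{3}$ has zero initial data and forcing $f$. Each piece admits an explicit representation: $u_{1}(t,\cdot)=p(t,\cdot)\ast u_{0}$, $u_{2}(t,\cdot)=P(t,\cdot)\ast v_{0}$ with $P=q_{\alpha,\alpha-1}$, and $u_{3}(t,\cdot)=\int_{0}^{t}q_{\alpha,1}(t-s,\cdot)\ast f(s,\cdot)\,ds$. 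Since no stochastic forcing is present, the only admissible triple in Definition \ref{def 11.02.15:06} is $(f,0,0)$, and the norm $\|u\|_{\mathcal{H}^{2}_{p}(T)}$ reduces to $\|u\|_{\mathbb{H}^{2}_{p}(T)}+\|u_{0}\|_{U^{2}_{p}}+1_{\alpha>1}\|v_{0}\|_{V^{2}_{p}}+\|f\|_{\mathbb{L}_{p}(T)}$.

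For $u_{3}$, the bound $\|u_{3}\|_{\mathbb{H}^{2}_{p}(T)}\leq C\|f\|_{\mathbb{L}_{p}(T)}$ is the classical deterministic $L_{p}$-maximal regularity for the time-fractional heat equation with zero initial data, proven e.g.\ in \cite{kim17timefractionalpde, dong2019lp, zacher2006quasilinear}, and I would import it directly.

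For $u_{1}$, the identity $\Delta(p(t,\cdot)\ast u_{0})=p(t,\cdot)\ast\Delta u_{0}$ (first for $u_{0}\in C_{c}^{\infty}$) reduces the estimation of $\|\Delta u_{1}\|_{\mathbb{L}_{p}(T)}$ to Theorem \ref{Besov_bound_2} applied with $\Delta u_{0}$ in place of $f$:
\[
\int_{0}^{T}\|\Delta u_{1}(t,\cdot)\|_{L_{p}}^{p}\,dt\leq C\,\|\Delta u_{0}\|_{B_{p}^{-2/(\alpha p)}}^{p}\leq C\,\|u_{0}\|_{B_{p}^{2-2/(\alpha p)}}^{p},
\]
where the second inequality uses the $B_{p}^{s+2}\to B_{p}^{s}$ mapping property of $(-\Delta)$ from Remark \ref{fractional_laplacian_besov}; the $\|u_{1}\|_{\mathbb{L}_{p}(T)}$-part follows from the same theorem without the Laplacian. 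For $p\geq 2$ the embedding $H_{p}^{s}\hookrightarrow B_{p}^{s}$ from Remark \ref{fractional_laplacian_besov} converts the right-hand side to $C\|u_{0}\|_{U^{2}_{p}}$; when $1<p<2$ the missing embedding is compensated by an appeal to the deterministic references. The contribution $u_{2}$ is treated identically, with Theorem \ref{Besov_bound_3} replacing Theorem \ref{Besov_bound_2}: the two branches \eqref{Besov_bound_3_2} and \eqref{eqn 05.29.20:18} align precisely with the two cases of $V^{\gamma+2}_{p}$ in \eqref{eqn 08.09.4}, and $\Delta v_{0}$ is estimated in the appropriate negative Besov norm.

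Uniqueness follows from Theorem \ref{thm 07.22.1}(iii) applied to the difference of two solutions: zero data on the right-hand side of \eqref{eqn 07.26.3} forces $u\equiv 0$. The main technical obstacle I anticipate is the rigorous justification of the convolution representations when $u_{0}$, $v_{0}$ only have fractional Bessel-potential regularity; I would first work with $C_{c}^{\infty}$ data, where differentiation under the convolution and use of Lemma \ref{properties_of_p_q} are straightforward, and then extend by density together with the a priori bounds just established, identifying the limit as a solution in the sense of Definition \ref{def 07.11.1}.
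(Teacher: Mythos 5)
Your overall route is essentially the paper's: represent the initial-data parts by the explicit kernels $p$ and $P=q_{\alpha,\alpha-1}$, estimate them via Theorem \ref{Besov_bound_2} and Theorem \ref{Besov_bound_3} together with the embedding $H^{s}_{p}\subset B^{s}_{p}$ of Remark \ref{fractional_laplacian_besov}, import the zero-initial-data maximal regularity from \cite{kim16timefractionalspde,kim17timefractionalpde} for the forcing term, and pass to general data by density. The paper normalizes to $\gamma=-2$ rather than $\gamma=0$, so that only $\|u\|_{\bL_{p}(T)}$ needs to be bounded and the step $\Delta(p\ast u_{0})=p\ast\Delta u_{0}$ is unnecessary; this is cosmetic. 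Your observation about $1<p<2$ is apt: the paper's own proof has the same limitation and only addresses it in the remark following the lemma (where the $H$-based data spaces are replaced by Besov spaces), so "appeal to the deterministic references" is an acceptable, if thin, resolution.

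The one step that does not work as written is uniqueness. If $w$ is the difference of two solutions, then in Definition \ref{def 07.11.1} the admissible triple for $w$ is $(\Delta w,0,0)$, not $(0,0,0)$; hence the right-hand side of \eqref{eqn 07.26.3} contains $C\int_{0}^{t}(t-s)^{\theta-1}\|\Delta w\|^{p}_{\bH^{\gamma}_{p}(s)}\,ds$, which is controlled only by $\|w\|^{p}_{\bH^{\gamma+2}_{p}(s)}$, and the inequality does not close to give $w\equiv 0$ by Gronwall. The paper instead obtains uniqueness directly from the deterministic zero-initial-data theorem (\cite[Theorem 2.3]{kim16timefractionalspde}, equivalently \cite[Theorem 2.10]{kim17timefractionalpde}), whose uniqueness assertion applies to $w$ since it solves $\partial^{\alpha}_{t}w=\Delta w$ with vanishing data. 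Since you already invoke that result for the forcing part, the fix is to cite its uniqueness statement rather than \eqref{eqn 07.26.3}. A smaller quibble: the triple in Definition \ref{def 11.02.15:06} need not be unique even when $g=h=0$ is one admissible choice (the paper notes non-uniqueness when $\beta_{1}<1/2$ or $\beta_{2}<1/2$), but since the norm takes an infimum over triples, using $(f,0,0)$ to bound it from above is all you need.
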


\begin{proof}

Due to Remark \ref{fractional_laplacian_besov}, it is enough to prove the lemma for a particular $\gamma$, and therefore we assume  $\gamma=-2$.  

 The statements of the lemma hold if  $u_0=v_0=0$ due to 
   \cite[Theorem 2.3]{kim16timefractionalspde}
(or  \cite[Theorem 2.10]{kim17timefractionalpde}),  from which   the uniqueness result  follows. Furthermore, considering $u-v$, where $v$ is the solution to the equation with $u_0=v_0=0$ taken from   \cite[Theorem 2.3]{kim16timefractionalspde}, we may assume that $f=0$. By  $\bL_c$ we denote the space of the functions $g$ of the form
\begin{equation*}
g(\omega,x)=\sum_{i=1}^{n}1_{A_{i}}(\omega)g_{i}(x), \quad A_{i}\in\mathscr{F}_{0}, \,g_{i}\in \Ccinf(\R^{d}).
\end{equation*}
Then one can easily check that $\mathbb{L}_{c}$ is dense in $L_{p}(\Omega,\mathscr{F}_{0};H^{\gamma}_{p})$ for any $\gamma\in\bR$. Now, let $u_0,v_0 \in \bL_c$ and define
$$
u(t,x):=(p(t,\cdot)\ast u_0(\cdot))(x)+ 1_{\alpha>1} (P(t,\cdot)\ast v_0(\cdot))(x).
$$
Then by Lemma \ref{properties_of_p_q} (or see \cite[Lemma 3.5]{kim17timefractionalpde} for more detail), $u$ satisfies equation \eqref{eqn deterministic}, and $u\in \bH^n_p(T)$ for any $n\in \bR$, since $u_0,v_0\in \bL_c$. Moreover, for this solution we have
$$
\|u\|_{\mathbb{H}^{\gamma+2}_{p}(T)} \leq C 
 \big(\|u_{0}\|_{U^{\gamma+2}_{p}}+1_{\alpha>1}\|v_{0}\|_{V^{\gamma+2}_{p}}+\|f\|_{\bH^{\gamma}_{p}(T)}\big)
$$
 with $\gamma=-2$ due to Theorem \ref{Besov_bound_2}, Theorem \ref{Besov_bound_3}, and Remark \ref{fractional_laplacian_besov}.  This estimate and  the definition of norm in $\mathcal{H}^{0}_{p}(T)$ certainly yield \eqref{eqn 3.2}.  

In general, take $u^n_0, v^n_0 \in \bL_c$ such that $u^n_0 \to u_{0}$ in $U^{0}_p$ and $v^n_0 \to v_0$ in $V^{0}_p$, and for each $n$ let $u_n$ denote the solution to the equation with initial data $u^n_0$ and $v^n_0$. Then  estimate \eqref{eqn 3.2} corresponding to $u_n-u_m$, where $n,m\in \bN$, shows that 
$u_n$ is a Cauchy sequence in $\mathcal{H}^0_p(T)$, which is a Banach space. Now it is easy to check that the limit of the Cauchy sequence becomes a solution to the equation with initial data $u_0$ and $v_0$, and the estimate also follows.  The lemma is proved.
\end{proof}

\begin{remark}
The proof of  Lemma \ref{deterministic} actually shows that the lemma holds for any $p\in (1,\infty)$ with appropriate Besov spaces.  Precisely speaking, if $\alpha>1+1/p$, then we can use $B^{\gamma+2-2/\alpha p}_{p}$ and $B^{\gamma+2-2/\alpha-2/\alpha p}_{p}$ instead of $U^{\gamma+2}_{p}$, and $V^{\gamma+2}_{p}$ respectively, and for $\alpha\leq 1+1/p$, then we can use $B^{\gamma+2-2/\alpha p}_{p}$ and $B^{\gamma+2-2/\alpha}_{p}$ instead of $U^{\gamma+2}_{p}$, and $V^{\gamma+2}_{p}$ respectively.
\end{remark}

 For $l_2$-valued functions $h$, we write  $h\in\bH^{\infty}_{c}(T,l_{2})$ if $h^{k}=0$ for all large $k$, and each $h^{k}$ is of the type
\begin{equation*}
h^{k}(t,x)=\sum_{i=1}^{n}1_{(\tau_{i-1},\tau_{i}]}(t)g^{ik}(x),
\end{equation*}
where $\tau_{i}$ are bounded stopping times, $\tau_i\leq \tau_{i+1}$, and $g^{ik}\in\Ccinf(\R^{d})$. The space $\bH^{\infty}_{c}(T,l_{2},d_1)$ is defined similarly.  By \cite[Theorem 3.10]{kry99analytic}, $\bH^{\infty}_{c}(T,l_{2})$ is dense in $\bH^{\gamma}_{p}(T,l_{2})$.

\begin{lemma}\label{lem 07.22.1}
Let $\alpha \in (0,2), \beta_2<\alpha+1/p$ and $h\in \bH^{\infty}_c(T)$.  Denote
\begin{equation}
\label{eqn 5.19.1}
u(t,x):=\sum_{k=1}^{\infty}\int_{0}^{t}\left(\int_{\R^{d}}q_{\alpha,\beta_{2}}(t-s,x-y)h^{k}(s,y)dy\right) \cdot dZ^{k}_{s}.
\end{equation}
Then $u\in\mathcal{H}^{2}_{p}(T)$ and satisfies
\begin{equation}\label{3.2}
\partial^{\alpha}_{t}u=\Delta u +\partial^{\beta_{2}}_{t}\int_{0}^{t}h^{k}(s,x) \cdot dZ^{k}_{s},\,\, t>0,x\in\R^{d};\quad u(0)=\partial_{t}u(0)1_{\alpha>1}=0
\end{equation}
in the sense of Definition \ref{def 07.11.1}.
\end{lemma}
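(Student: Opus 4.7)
The plan is to establish two things: (a) $u \in \bH^2_p(T)$ (so that $u \in \cH^2_p(T)$ with trivial initial data $u_0 = v_0 = 0$), and (b) the distributional identity of Definition \ref{def 07.11.1} holds with $f = \Delta u$, $g = 0$, and the given $h$. The regularity of $h \in \bH^\infty_c(T,l_2,d_1)$ --- finitely many nonzero components, each a simple step process in $t$ valued in $\Ccinf(\R^d)$ --- makes all the manipulations below rigorous: $\Delta_x$ commutes with the stochastic integral, stochastic Fubini is available, and series manipulations are legal.

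For the regularity bound, fix $(t,x)$ and apply the Burkholder-Davis-Gundy inequality together with \eqref{eqn 07.16.1} to the $\cF_\tau$-martingale $\tau \mapsto \sum_k \int_0^\tau \Delta_x(q_{\alpha,\beta_2}(t-s,\cdot)\ast h^k(s,\cdot))(x)\cdot dZ^k_s$ evaluated at $\tau = t$. Integrating in $x$ and $t$ and applying Fubini yields
\begin{equation*}
\|\Delta u\|^p_{\bL_p(T)} \leq C \sum_{r=1}^{d_1}\bE \int_0^T\!\!\int_0^t \|\Delta(q_{\alpha,\beta_2}(t-s,\cdot)\ast h^r(s,\cdot))\|^p_{L_p(\R^d;\,l_2)}\,ds\,dt,
\end{equation*}
which is exactly the vector-valued form of \eqref{levy}. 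Theorem \ref{Besov_bound}, applied componentwise in the $l_2$-index with parameters approaching $c_1+\varepsilon=2$ (i.e.\ $\varepsilon\uparrow\bar c_0$ when $\beta_2>1/p$), together with the embedding $H^{\bar c_0}_p \hookrightarrow B^\varepsilon_p$ (Remark \ref{fractional_laplacian_besov}), controls the right-hand side by $\|h\|^p_{\bH^{\bar c_0}_p(T,l_2,d_1)}$; since $h\in\bH^\infty_c$ trivially lies in this space, $\Delta u \in \bL_p(T)$. An analogous but simpler estimate without $\Delta_x$ (and with $\|q_{\alpha,\beta_2}(t,\cdot)\|_{L_1}\sim t^{\alpha-\beta_2}$ being time-integrable for $p(\alpha-\beta_2)>-1$, hence Young's inequality applicable) handles $\|u\|_{\bL_p(T)}$.

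For the equation, fix $\phi\in\cS(\R^d)$. By the evenness of $q_{\alpha,\beta_2}(t,\cdot)$ in $x$ together with Lemma \ref{lem 07.11.1}(ii) (stochastic Fubini), each of $(u(t),\phi)$, $I^\alpha_t(\Delta u,\phi)$, and $I^{\alpha-\beta_2}_t \sum_k \int_0^t(h^k(s),\phi)\cdot dZ^k_s$ takes the common form $\sum_k\int_0^t(h^k(s),G^s_t)\cdot dZ^k_s$ with respective kernels $q_{\alpha,\beta_2}(t-s,\cdot)\ast\phi$, $\tfrac{1}{\Gamma(\alpha)}\int_s^t(t-\tau)^{\alpha-1}\Delta(q_{\alpha,\beta_2}(\tau-s,\cdot)\ast\phi)\,d\tau$, and $\tfrac{(t-s)^{\alpha-\beta_2}}{\Gamma(\alpha-\beta_2+1)}\phi$ (with a parallel formula involving $D^{\beta_2-\alpha}_t$ when $\alpha<\beta_2$). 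The required identity therefore reduces to the deterministic pointwise equality
\begin{equation*}
q_{\alpha,\beta_2}(T,\cdot)\ast\phi \;-\; \tfrac{1}{\Gamma(\alpha)}\int_0^T(T-\tau)^{\alpha-1}\Delta(q_{\alpha,\beta_2}(\tau,\cdot)\ast\phi)\,d\tau \;=\; \tfrac{T^{\alpha-\beta_2}}{\Gamma(\alpha-\beta_2+1)}\phi
\end{equation*}
with $T=t-s$. Taking the Fourier transform, this becomes a scalar identity on $\hat q_{\alpha,\beta_2}(T,\xi) = T^{\alpha-\beta_2}E_{\alpha,1+\alpha-\beta_2}(-T^\alpha|\xi|^2)$: expanding the Mittag-Leffler series and using $I^\alpha_T\tau^{\alpha k+\alpha-\beta_2} = \tfrac{\Gamma(\alpha k+\alpha-\beta_2+1)}{\Gamma(\alpha(k+1)+\alpha-\beta_2+1)}T^{\alpha(k+1)+\alpha-\beta_2}$ telescopes the series through an index shift $k\mapsto k+1$, leaving only the $k=0$ constant term $T^{\alpha-\beta_2}/\Gamma(\alpha-\beta_2+1)$.

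The main obstacle is the concurrent handling of stochastic integrals with fractional integrals in step (b), which is precisely the situation Lemma \ref{lem 07.11.1} was designed to address; a secondary technicality is the boundary case $\varepsilon = \bar c_0$ which barely fails the hypothesis \eqref{epsilon_delta} of Theorem \ref{Besov_bound}, but one may pass to the limit $\varepsilon\uparrow\bar c_0$ since $h\in\bH^\infty_c$ has arbitrarily high regularity and absorbs the constant blow-up.
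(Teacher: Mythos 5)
Your proposal is correct in substance, but it takes a genuinely different route from the paper: the paper's entire proof is a one-line reduction to \cite[Lemma 3.10]{chen2015fractional} (the Brownian analogue), whereas you give a self-contained argument. Your two ingredients --- (a) the regularity bound via Burkholder--Davis--Gundy, \eqref{eqn 07.16.1} and Theorem \ref{Besov_bound}, and (b) the verification of \eqref{eqn 07.12.2} by pushing everything through stochastic Fubini into a single stochastic integral and checking the resulting deterministic kernel identity on the Fourier side via the Mittag--Leffler series (the telescoping $I^\alpha_T \tau^{\alpha k+\alpha-\beta_2}$ computation is exactly right) --- are precisely the tools the paper deploys elsewhere (Theorem \ref{thm 07.24.1}, Case 1, and the proof of Theorem \ref{thm 07.22.1}(iii)), so your argument is consistent with the paper's machinery even though the authors chose not to write it out here. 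What your route buys is transparency about where the hypothesis $\beta_2<\alpha+1/p$ enters; what the citation buys is brevity.

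Three loose ends are worth tightening. First, your treatment of the boundary case is misdescribed: one does not ``pass to the limit $\varepsilon\uparrow\bar c_0$'' (the constant in Theorem \ref{Besov_bound} is not controlled there); rather, as in Case 1 of Theorem \ref{thm 07.24.1}, one fixes a single admissible $\varepsilon<\bar c_0$ and applies the theorem to $(-\Delta)^{(2-c_1-\varepsilon)/2}h$ in place of $h$, so that $\Delta u=(-\Delta)^{(c_1+\varepsilon)/2}v$ with $v$ the convolution against the smoother datum; no limit is taken. Second, ``componentwise in the $l_2$-index'' only controls $\int(\sum_k|\cdot|^2)^{p/2}dx$ by $\sum_k\int|\cdot|^pdx$ when there are finitely many nonzero components (for $p>2$ the inequality $\|\cdot\|_{l_2}\geq\|\cdot\|_{l_p}$ goes the wrong way); this is fine here since $h\in\bH^{\infty}_{c}(T,l_2,d_1)$, but should be said. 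Third, Theorem \ref{Besov_bound} requires $\beta_2>1/p$, so your scheme does not cover $\beta_2\leq 1/p$; there one argues directly, e.g.\ via Young's inequality using $\|\Delta q_{\alpha,\beta_2}(t,\cdot)\|_{L_1}\leq Ct^{-\beta_2}$ with $\beta_2 p<1$ (or the reductions of Cases 2--3 of Theorem \ref{thm 07.24.1}). Also note that the interchange producing the kernel $\frac{1}{\Gamma(\alpha)}\int_s^t(t-\tau)^{\alpha-1}\Delta(q_{\alpha,\beta_2}(\tau-s,\cdot)\ast\phi)\,d\tau$ needs the general stochastic Fubini theorem (integrand depending on both time variables), not Lemma \ref{lem 07.11.1}(ii) as literally stated; the required version is the one cited in the paper from \cite{protter2005stochastic}.
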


\begin{proof}
It is enough to repeat the proof of   \cite[Lemma 3.10]{chen2015fractional}, which deals with the equation driven by Brownian motions. 
\end{proof}

Next we prove  a version of Theorem \ref{thm 10.08.10:57} for the linear  equation 
\begin{eqnarray}
&&\partial^{\alpha}_{t}u=\Delta u +f+\partial^{\beta_{1}}_{t}\int_{0}^{t}g^{k}(s,x)dW^{k}_{s}+\partial^{\beta_{2}}_{t}\int_{0}^{t}h^{k}(s,x) \cdot dZ^{k}_{s}, \quad t>0,x\in\R^{d},  \nonumber
\\
&&\quad u(0)=u_{0},\quad \partial_{t}u(0)1_{\alpha>1}=1_{\alpha>1}v_{0}, \label{eqn 07.18.3}
\end{eqnarray}

\begin{theorem}\label{thm 07.24.1}
Let $\gamma\in\R$, $p\geq2$, $\beta_{1}<\alpha+1/2$ and $\beta_{2}<\alpha+1/p$.  Then, for any  $u_{0}\in U^{\gamma+2}_{p}$, $v_{0}\in V^{\gamma+2}_{p}$, $f\in\bH^{\gamma}_{p}(T)$, $g\in\bH^{\gamma+c_{0}}_{p}(T,l_{2})$ and $h\in\bH^{\gamma+\bar{c}_{0}}_{p}(T,l_{2},d_{1})$, equation \eqref{eqn 07.18.3} has a unique solution $u$ in the class $\mathcal{H}^{\gamma+2}_{p}(T)$, and for this solution  it holds that 
\begin{equation}\label{eqn 07.19.1}
\begin{aligned}
  \|u\|_{\mathcal{H}^{\gamma+2}_{p}(T)}      \leq C  \big(\|u_{0}\|_{U^{\gamma+2}_{p}}&+1_{\alpha>1}\|v_{0}\|_{V^{\gamma+2}_{p}}+\|f\|_{\bH^{\gamma}_{p}(T)}
\\
&+\|g\|_{\bH^{\gamma+c_{0}}_{p}(T,l_{2})}+\|h\|_{\bH^{\gamma+\bar{c}_{0}}_{p}(T,l_{2},d_{1})}\big),
\end{aligned}
\end{equation}
where  $C=C(\alpha,\beta_{1},\beta_{2},d,d_{1},p,\gamma,T)$. 

\end{theorem}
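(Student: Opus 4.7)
The plan is to reduce and decompose. By Theorem \ref{thm 07.22.1}(i), the isometry $(1-\Delta)^{\gamma/2}$ transforms \eqref{eqn 07.18.3} into the same equation with data and forcing translated down by $\gamma$, so it suffices to treat $\gamma = 0$. By linearity I then split $u = u^{\mathrm{det}} + u^{W} + u^{Z}$, carrying respectively the initial data together with $f$, the Wiener forcing $g$, and the L\'evy forcing $h$. The existence and estimate for $u^{\mathrm{det}}$ are exactly Lemma \ref{deterministic}, while for $u^{W}$ (with zero initial data and $h=f=0$) the bound $\|u^{W}\|_{\mathcal{H}^{2}_{p}(T)} \leq C \|g\|_{\bH^{c_{0}}_{p}(T,l_{2})}$ is the main theorem of \cite{kim16timefractionalspde}, whose proof relies on the sharp Wiener analogue of Theorem \ref{Besov_bound}.

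The genuinely new step is the estimate for $u^{Z}$. First approximate $h \in \bH^{\bar{c}_0}_p(T, l_2, d_1)$ by $h_n \in \bH^{\infty}_c(T, l_2, d_1)$ and define $u^{Z}_n$ by the explicit representation \eqref{eqn 5.19.1}; Lemma \ref{lem 07.22.1} then gives $u^{Z}_n \in \mathcal{H}^{2}_{p}(T)$. Control of $\|u^{Z}_n\|_{\bL_p(T)}$ comes from Theorem \ref{thm 07.22.1}(iii), so the crux is the top-order estimate. For this, apply $(-\Delta)$ under the stochastic integral in \eqref{eqn 5.19.1} and use the pointwise L\'evy $L_p$-inequality \eqref{eqn 07.16.1} (as in the proof of Theorem \ref{thm 07.22.1}(iii)) to obtain
$$
\|(-\Delta) u^{Z}_n\|^{p}_{\bL_{p}(T)} \leq C \sum_{r=1}^{d_1} \bE \int_{0}^{T} \int_{0}^{t} \int_{\R^{d}} \bigl|(-\Delta) q_{\alpha,\beta_{2}}(t-s, \cdot) \ast h_n^{r}(s, \cdot)(x)\bigr|^{p}_{l_2} \, dx \, ds \, dt.
$$
When $\beta_2 > 1/p$ (so $\bar{c}_0 = 2(\beta_2 - 1/p)/\alpha > 0$), I invoke the $l_2$-valued extension of Theorem \ref{Besov_bound} with $\beta = \beta_2$ and $\varepsilon = \bar{c}_0$: then $c_1 + \varepsilon = 2(\alpha + 1/p - \beta_2)/\alpha + \bar{c}_0 = 2$, hence $(-\Delta)^{(c_1+\varepsilon)/2} = -\Delta$ and the displayed right side is bounded by $C \int_0^T \|h_n(t,\cdot)\|^{p}_{B^{\bar{c}_{0}}_{p}(l_2, d_1)} dt$. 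The embedding $H^{\bar{c}_{0}}_{p} \hookrightarrow B^{\bar{c}_{0}}_{p}$ for $p \geq 2$ from Remark \ref{fractional_laplacian_besov} then completes the bound by $C\|h_n\|^p_{\bH^{\bar{c}_0}_p(T, l_2, d_1)}$, uniformly in $n$.

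The hard part will be to verify admissibility of the parameters $(\varepsilon, \delta)$ in \eqref{epsilon_delta} and to adapt the scalar statement of Theorem \ref{Besov_bound} to $(l_2, d_1)$-valued functions; the latter goes through by vectorial Minkowski since the Littlewood--Paley pieces $Q_i$ act by scalar convolution. The boundary case $\beta_2 = 1/p$ (where $\bar{c}_0 = \kappa$) and the case $\beta_2 < 1/p$ (where $\bar{c}_0 = 0$) require only minor modifications: in the latter, $q_{\alpha,\beta_2}$ is less singular at $t = 0$ and one may either repeat the Littlewood--Paley argument with adjusted exponents or embed into a Besov space of slightly higher smoothness. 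Once the uniform bound is in place, $\{u^{Z}_n\}$ is Cauchy in the Banach space $\mathcal{H}^{2}_{p}(T)$ (Theorem \ref{thm 07.22.1}(ii)), so the limit $u^{Z}$ exists and satisfies \eqref{eqn 07.10.2} with source $h$ by Remark \ref{rmk 07.10.1}(ii). Finally, uniqueness for the full equation \eqref{eqn 07.18.3} follows by applying \eqref{eqn 07.19.1} to the difference of two solutions with zero data and forcing.
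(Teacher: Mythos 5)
Your overall architecture coincides with the paper's: reduce to $\gamma=0$ by the isometry, superpose the deterministic, Wiener and L\'evy parts, approximate $h$ by elements of $\bH^{\infty}_{c}(T,l_{2},d_{1})$, represent the solution by \eqref{eqn 5.19.1} via Lemma \ref{lem 07.22.1}, control the lower-order norm by Theorem \ref{thm 07.22.1}(iii), and obtain the top-order bound from the Littlewood--Paley estimates of Section 3. The gap is in the step where you ``invoke Theorem \ref{Besov_bound} with $\varepsilon=\bar{c}_{0}$.'' Condition \eqref{epsilon_delta} requires $\frac{1}{p}<\beta_{2}-\frac{\alpha}{2}\varepsilon$, and with $\varepsilon=\bar{c}_{0}=\frac{2(\beta_{2}-1/p)}{\alpha}$ this reads $\frac{1}{p}<\frac{1}{p}$, so your parameter choice is inadmissible. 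This is not a removable technicality: the hypothesis encodes $c_{1}+\varepsilon<2$, which is exactly what makes $\|(-\Delta)^{(c_{1}+\varepsilon)/2}q_{\alpha,\beta_{2}}(t,\cdot)\|_{L_{1}}$ finite in \eqref{eqn 5.16.6}; the pointwise bound \eqref{bounds_of_q_1} is only available for $\gamma\in[0,2)$ and degenerates to the non-locally-integrable $|x|^{-d}$ at $\gamma=2$, so Lemma \ref{bound_of_q_j} has no endpoint version with $c_{1}+\varepsilon=2$. The repair (and this is what the paper does) is to redistribute the two derivatives between kernel and datum: write $\Delta q_{\alpha,\beta_{2}}\ast h=\big((-\Delta)^{(c_{1}+\varepsilon)/2}q_{\alpha,\beta_{2}}\big)\ast\big((-\Delta)^{(2-c_{1}-\varepsilon)/2}h\big)$ with a \emph{small} admissible $\varepsilon$, apply Theorem \ref{Besov_bound} to $\bar{h}=(-\Delta)^{(2-c_{1}-\varepsilon)/2}h$, and use the boundedness of $(-\Delta)^{(2-c_{1}-\varepsilon)/2}$ from $H^{2-c_{1}}_{p}=H^{\bar{c}_{0}}_{p}$ into $B^{\varepsilon}_{p}$ (Remark \ref{fractional_laplacian_besov}); the net smoothness count is still the $\bar{c}_{0}$ you expected, but the kernel stays integrable.

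Two further points. First, after collapsing the Burkholder--Davis--Gundy bound via \eqref{eqn 07.16.1} you are left with $\big(\sum_{k}|\Delta q_{\alpha,\beta_{2}}\ast h^{rk}|^{2}\big)^{p/2}$ inside the space-time integral, so you really do need an $l_{2}$-valued version of Theorem \ref{Besov_bound}; the paper avoids proving one by keeping the two terms of \eqref{eqn 5.19.5} separate, estimating the square-function term by the Wiener-case result \cite[Theorem 3.1]{kim16timefractionalspde} and the term $\sum_{k}|\cdot|^{p}$ by the scalar Theorem \ref{Besov_bound} summed over $k$, invoking $\sum_{k}\|h^{rk}\|^{p}_{H^{\bar{c}_{0}}_{p}}\leq\|h^{r}\|^{p}_{H^{\bar{c}_{0}}_{p}(l_{2})}$ only at the level of the data. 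Second, the cases $\beta_{2}=1/p$ and $\beta_{2}<1/p$ are not mere exponent adjustments: the paper handles $\beta_{2}=1/p$ by solving the equation with $\beta_{2}'=1/p+\kappa\alpha/2$ and identifying $u=I^{\kappa\alpha/2}_{t}v$, and handles $\beta_{2}<1/p$ by absorbing the stochastic term into a deterministic free term $\bar{f}=\Gamma(1-\beta_{2})^{-1}\int_{0}^{t}(t-s)^{-\beta_{2}}h\cdot dZ_{s}$ via Lemma \ref{lem 07.11.1}(iii), estimating it by \eqref{eqn 5.20.1}, and then applying Lemma \ref{deterministic}. Your uniqueness argument and the treatment of the deterministic and Wiener parts are fine.
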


\begin{proof}
  Due to Remark \ref{fractional_laplacian_besov} it is enough to prove the lemma for   $\gamma=0$. The uniqueness follows from Lemma \ref{deterministic}.
  
 Recall that the lemma holds if $h=0$ and  $u_0=v_0=0$ by  \cite[Theorem 2.3]{kim16timefractionalspde},  and it holds if $f=0, g=0, h=0$ by Lemma \ref{deterministic}.  By the linearity of the equation, if $h=0$ then   the existence and the desired estimate  is easily obtained by combining   \cite[Theorem 2.3]{kim16timefractionalspde} and Lemma \ref{deterministic}.  The case $h=0$ is proved.

  Furthermore,  by the result for the case $h=0$ and the linearity of the equation,    
 to finish the proof of the lemma, we only need to   prove the existence result and estimate \eqref{eqn 07.19.1}, provided that $u_0=v_0=0, f=0$ and $g=0$. Also it suffices to prove \eqref{eqn 07.19.1} with $\|u\|_{\mathbb{H}^{\gamma+2}_{p}(T)}$ in place of $\|u\|_{\mathcal{H}^{\gamma+2}_{p}(T)}$ due to the definition of $\|u\|_{\mathcal{H}^{\gamma+2}_{p}(T)}$.
 We divide the proof of this into following three cases.

{\bf{Case 1}}. Let $\beta_2>1/p$. 

If $h\in \bH^{\infty}_c(T,l_2,d_1)$, we define  $u\in \cH^2_p(T)$  as in \eqref{eqn 5.19.1} such that it becomes a solution to equation \eqref{3.2}. Denote 
 $c_1:=\frac{2(\alpha+1/p-\beta_2)}{\alpha}$ and take a small  constant $\varepsilon \in (0,c_1)$ satisfying \eqref{epsilon_delta} with $\beta_{2}$ in place of $\beta$, and  set
  \begin{equation*}
v:=(-\Delta)^{(2-c_{1}-\varepsilon)/2}u,\quad \bar{h}:=(-\Delta)^{(2-c_1-\varepsilon)/2}h.
\end{equation*}
By Burkholder-Davis-Gundy inequality, and \eqref{eqn 5.19.5} 
\begin{equation*}
\begin{aligned}
\|\Delta u\|^{p}_{\bL_{p}(T)}&=\|(-\Delta)^{(c_1+\varepsilon)/2}v\|^{p}_{\bL_{p}(T)}
\\
&\leq C \bE \int_{\R^{d}}\int_{0}^{T}\left(\int_{0}^{t}\sum_{k=1}^{\infty}\left|(-\Delta)^{\frac{c_{1}+\varepsilon}{2}}q_{\alpha,\beta_{2}}(t-s,\cdot)\ast\bar{h}^{k}(s,\cdot)\right|^{2}(x)ds\right)^{\frac{p}{2}}dtdx
\\
&\quad +C \bE\int_{\R^{d}}\int_{0}^{T}\int_{0}^{t}\sum_{k=1}^{\infty}\left|(-\Delta)^{\frac{c_{1}+\varepsilon}{2}}q_{\alpha,\beta_{2}}(t-s,\cdot)\ast\bar{h}^{k}(s,\cdot)\right|^{p}(x)dsdtdx.
\end{aligned}
\end{equation*}
By \cite[Theorem 3.1]{kim16timefractionalspde} we have
\begin{equation*}
\begin{aligned}
\bE \int_{\R^{d}}\int_{0}^{T}\left(\int_{0}^{t}\sum_{k=1}^{\infty}\left|(-\Delta)^{\frac{c_{1}+\varepsilon}{2}}q_{\alpha,\beta_{2}}(t-s,\cdot)\ast\bar{h}^{k}(s,\cdot)\right|^{2}(x)ds\right)^{\frac{p}{2}}dtdx \leq C\|\bar{h}\|^{p}_{\bL_{p}(T,l_{2},d_{1})}, 
\end{aligned}
\end{equation*}
where the constant $C$ depends only on $\alpha,\beta_{2},d,d_{1}$, and $p$. Also by Theorem \ref{Besov_bound} and Remark \ref{fractional_laplacian_besov}\begin{equation*}
\begin{aligned}
&\bE\int_{\R^{d}}\int_{0}^{T}\int_{0}^{t}\sum_{k=1}^{\infty}\left|(-\Delta)^{\frac{c_{1}+\varepsilon}{2}}q_{\alpha,\beta_{2}}(t-s,\cdot)\ast \bar{h}^{k}(s,\cdot)\right|^{p}(x)dsdtdx 
\\
&\quad \leq C\bE \sum_{r=1}^{d_{1}}\sum_{k=1}^{\infty}\int_{0}^{T}\|\bar{h}^{rk}(t,\cdot)\|^{p}_{B^{\varepsilon}_{p}}dt \leq C \bE \sum_{r=1}^{d_{1}}\sum_{k=1}^{\infty}\int_{0}^{T}\|h^{rk}(t,\cdot)\|^{p}_{H^{2-c_1}_{p}} dt,
\end{aligned}
\end{equation*}
where the constant $C$ depends only on $\alpha,\beta_{2},d,d_{1}$, and $p$.  The above estimations and  the inequality $\sum_{k=1}^{\infty}|a_{k}|^{p}\leq \left(\sum_{k=1}^{\infty}|a_{k}|^{2}\right)^{p/2}$ yield
\begin{equation}
  \label{eqn 5.19.20}
  \|\Delta u\|^{p}_{\bL_{p}(T)}\leq  C \|h\|^{p}_{\bH^{2-c_1}_{p}(T,l_{2},d_{1})}=C \|h\|^{p}_{\bH^{\bar{c}_{0}}_{p}(T,l_{2},d_{1})}.
\end{equation}
Also, due to \eqref{eqn 07.26.3} and the inequality $\|\cdot\|_{\bL_p(s)}\leq \|\cdot\|_{\bL_p(T)}$ for $s\leq T$, we have
\begin{eqnarray*}
\|u\|^{p}_{\bL_{p}(T)}&\leq& C \int_{0}^{T}(T-s)^{\theta-1}\Big(\|\Delta u\|^{p}_{\bL_{p}(T)}+\|h\|^{p}_{\bL_{p}(T,l_{2},d_{1})} \Big)ds\\
&\leq& C (\|\Delta u\|^p_{\bL_p(T)}+ \|h\|^{p}_{\bL_{p}(T,l_{2},d_{1})}) \leq C  \|h\|^{p}_{\bH^{\bar{c}'_{0}}_{p}(T,l_{2},d_{1})}.
\end{eqnarray*}
This, \eqref{eqn 5.19.20} and the inequality $\|u\|_{H^2_p}\leq \|u\|_{L_p}+\|\Delta u\|_{L_p}$ yield estimate \eqref {eqn 07.19.1}.

For general $h\in \bH^{\bar{c}_0}_p(T,l_2,d_1)$, it is enough to repeat the approximation argument used in the proof of Lemma \ref{deterministic}. 

\vspace{2mm}

{\bf{Case 2}}. Let  $\beta_{2}=1/p$.  
The argument used in Case 1 shows that to prove the existence result and estimate  \eqref{eqn 07.19.1} we may assume $h\in \bH^{\infty}_c(T,l_2,d_1)$. In this case the existence is a consequence of Lemma \ref{lem 07.22.1}. 

Let $u\in \cH^2_p(T)$ be the solution to the equation. Take $\kappa>0$ from \eqref{eqn 08.09.1}, and put $\kappa'=\kappa\alpha/2$ 
and $\beta'_{2}=1/p+\kappa' >1/p$. 
 Then by Case 1 with $\bar{c}'_{0}=(2\beta'_{2}-2/p)/\alpha=\kappa$, if we define $v$ as in \eqref{eqn 5.19.1} with $\beta'_{2}$ in place of $\beta_{2}$, then $v$ satisfies \eqref{3.2} (with $\beta'_{2}$), and 
\begin{equation}\label{eqn 11.09.16:40}
\|v\|_{\mathcal{H}^{2}_{p}(T)} \leq C \|h\|_{\mathbb{H}^{\kappa}_{p}(T,l_{2},d_{1})}.
\end{equation}
Since $I^{\kappa'}_{t}v$ satisfies \eqref{3.2} (with $\beta_{2}$), by the uniqueness of solutions, we obtain $u(t,x)=I^{\kappa'}_{t}v(t,x)$, and \eqref{eqn 07.19.1} holds due to \eqref{Jensen_fractional_integral} and \eqref{eqn 11.09.16:40}. Hence the case  $\beta_2=1/p$ is also proved. 

\vspace{2mm}

{\bf{Case 3}}. Let  $\beta_{2}<1/p$.   As in Case 2, we only need to prove estimate  \eqref{eqn 07.19.1}, provide that  $h\in \bH^{\infty}_c(T,l_2,d_1)$ and the solution $u$ already exists.

Put
$$
\bar{f}(t,x)=:\frac{1}{\Gamma(1-\beta_{2})}\int^t_0(t-s)^{-\beta_2}h^k(x)\cdot dZ^k_s.
$$
Then by the Burkerholder-Davis-Gundy inequality and \eqref{eqn 07.16.1},
\begin{equation}
   \label{eqn 5.20.1}
\|\bar{f}\|^p_{\bL_p(T)}\leq C \bE \int^T_0 \int^t_0 (t-s)^{-\beta_2 p}|h(s,\cdot)|^p_{L_p(l_2,d_1)} ds dt \leq C \|h\|^p_{\bL_p(T,l_2,d_1)}.
\end{equation}
Note that by Lemma  \ref{lem 07.11.1} (iii), $u$ satisfies
$$
\partial^{\alpha}_tu=\Delta u+\bar{f}, \, t>0\,; \quad u(0)=1_{\alpha>1}\partial_{t}u(0)=0.
$$
Therefore, estimate \eqref{eqn 07.19.1} follows from \eqref{eqn 5.20.1} and Lemma \ref{deterministic}. The theorem is proved.
\end{proof}

\vspace{3mm}

\textbf{Proof of Theorem \ref{thm 10.08.10:57}}.

\textbf{1. Linear case}. 
Due to the method of continuity (see e.g. \cite[Lemma 5.1]{kim16timefractionalspde}) and Theorem \ref{thm 07.24.1} we only need to prove that 
 a priori estimate \eqref{eqn 10.22.14:25} holds, provided that a solution $u\in \cH^{\gamma+2}_p(T)$ to equation  \eqref{eqn 07.16.2} already exists.  Also note that due to the definition of the norm in $\mathcal{H}^{\gamma+2}_{p}(T)$, we only need to prove \eqref{eqn 10.22.14:25} with $\|u\|_{\mathbb{H}^{\gamma+2}_{p}(T)}$ in place of $\|u\|_{\mathcal{H}^{\gamma+2}_{p}(T)}$. 
  
Step 1.
  Assume $u_0=v_0=0$.
  Denote
 $$
 \bar{f}:=b^{i}u_{x^{i}}+cu+f,\quad
 \bar{g}^k:=\mu^{ik}u_{x^i}+\nu^ku+g^k, \quad 
 \bar{h}^k:=\bar{\mu}^{ik}u_{x^i}+\bar{\nu}^ku+h^k.
 $$
Recall that $c_0, \bar{c}_0<2$. By Assumption \ref{asm 07.16.1}, $\mu=0$ if $c_0\geq 1$, and $\bar{\mu}=0$ if $\bar{c}_0 \geq 1$. Therefore, by \eqref{eqn 08.06.1}, 
 \begin{eqnarray*}
  \|\bar{g}\|_{\bH^{\gamma+c_0}_p(t,l_2)}&\leq& C 1_{c_0<1}\|u_x\|_{\bH^{\gamma+c_0}_p(T)}+C\|u\|_{\bH^{\gamma+c_0}_p(T)}+\|g\|_{\bH^{\gamma+c_0}_p(T,l_2)} \\
   &\leq&  C 1_{c_0<1}\|u\|_{\bH^{\gamma+c_0+1}_p(T)}+C\|u\|_{\bH^{\gamma+c_0}_p(T)}+\|g\|_{\bH^{\gamma+c_0}_p(T,l_2)}.
   \end{eqnarray*}
   The similar estimate holds for $\bar{f}$ and $\bar{h}$.  Using these and 
   the embedding inequality
\begin{equation}\label{eqn 10.08.13:18}
 \|u\|_{H^{\gamma+\delta}_p}\leq \varepsilon \|u\|_{H^{\gamma+2}_p}+C(\delta,\varepsilon)\|u\|_{H^{\gamma}_p}, \quad \delta\in (0,2), \,\varepsilon>0,
\end{equation}
 we get,   for any $\varepsilon>0$ 
  and $t\leq T$,
  \begin{equation} \label{eqn 5.20.4} 
  \begin{aligned}
  &\|\bar{f}\|_{\bH^{\gamma}_p(t)}+\|\bar{g}\|_{\bH^{\gamma+c_0}_p(t,l_2)} +   \|\bar{h}\|_{\bH^{\gamma+\bar{c}_0}_p(t,l_2,d_1)} \\
 & \quad \leq \varepsilon \|u\|_{\bH^{\gamma+2}_p(t)}
      +C \|u\|_{\bH^{\gamma}_p(t)}
      \\
&\quad\quad +  \|f\|_{\bH^{\gamma}_p(t)} + \|g\|_{\bH^{\gamma+c_0}_p(t,l_2)} 
      +\|h\|_{\bH^{\gamma+\bar{c}_0}_p(t,l_2,d_1)} <\infty.      
\end{aligned}
\end{equation}
  Recall $\frac{\partial}{\partial x^i}: H^{\nu}_p\to H^{\nu-1}_p$ is a bounded operator for any $\nu\in \bR$. Using this, \eqref{eqn 08.06.1} and Assumption \ref{asm 07.16.1},  we easily have
 \begin{eqnarray}
   \nonumber                   
&&\|\bar{f}\|_{\mathbb{H}^{\gamma}_{p}(T)} + \|\bar{g}\|_{\bH^{\gamma+c_0}_p(T,l_2)}+\|\bar{h}\|_{\bH^{\gamma+\bar{c}_0}_p(T,l_2,d_1)} \\
&\leq &
  C \|u\|_{\bH^{\gamma+2}_p(T)}+\|f\|_{\bH^{\gamma}_p(T)}
  +\|g\|_{\bH^{\gamma+c_0}_p(T,l_2)}+\|h\|_{\bH^{\gamma+\bar{c}_0}_p(T,l_2,d_1)}.   
  \label{eqn 5.21.4}  
  \end{eqnarray}

Due to Theorem \ref{thm 07.24.1} and \eqref{eqn 5.20.4},  we  can define $v\in \cH^{\gamma+2}_p(T)$ as the solution  to equation  \eqref{eqn 07.18.3}  with $\bar{g}$ and $\bar{h}$ in place of  $g$ and $h$, respectively. Furthermore,  for each $t\leq T$ we have
\begin{eqnarray}
\nonumber
\|v\|_{\bH^{\gamma+2}_p(t)} &\leq& C\|f\|_{\bH^{\gamma}_p(t)}
+C\|\bar{g}\|_{\bH^{\gamma+c_0}_p(t,l_2)}+C\|\bar{h}\|_{\bH^{\gamma+\bar{c}_0}_p(t,l_2,d_1)}. \label{eqn 5.21.1}
\end{eqnarray}

 \noindent
 Note that $\bar{u}:=u-v\in \cH^{\gamma+2}_p(T)$ satisfies
 $$
 \partial^{\alpha}_t \bar{u}=a^{ij}\bar{u}_{x^ix^j}+\tilde{f} , \,\,t>0\,;\quad \bar{u}(0)=1_{\alpha>1}\partial_{t}\bar{u}_t(0)=0,
 $$
 where
 $$
 \tilde{f}:=(a^{ij}-\delta^{ij})v_{x^ix^j}+\bar{f}-f.
 $$
 Therefore, by \cite[Theorem 2.10]{kim17timefractionalpde} and \eqref{eqn 5.20.4}, for each $t\leq T$
 \begin{eqnarray*}
 \|u\|_{\bH^{\gamma+2}_p(t)}&\leq& \|u-v\|_{\bH^{\gamma+2}_p(t)}+\|v\|_{\bH^{\gamma+2}_p(t)}\\
 &\leq&C \varepsilon \|u\|_{\bH^{\gamma+2}_p(t)}+C \|u\|_{\bH^{\gamma}_p(t)} +C\|f\|_{\bH^{\gamma}_p(t)}
  \\
  &&+ C\|g\|_{\bH^{\gamma+c_0}_p(t,l_2)}+C\|h\|_{\bH^{\gamma+\bar{c}_0}_p(t,l_2,d_1)}.
   \end{eqnarray*}
   Hence,
\begin{eqnarray} 
\nonumber
    \|u\|^p_{\bH^{\gamma+2}_p(t)} &\leq& C \|u\|^p_{\bH^{\gamma}_p(t)}+C\|f\|^{p}_{\bH^{\gamma}_p(t)}
  \\
   && +C\|g\|^{p}_{\bH^{\gamma+c_0}_p(t,l_2)}+C\|h\|^{p}_{\bH^{\gamma+\bar{c}_0}_p(t,l_2,d_1)}. \label{eqn 5.21.7}
   \end{eqnarray}
   Combining this, \eqref{eqn 5.21.4} and \eqref{eqn 07.26.3}, we get for each $t\leq T$
   
   \begin{eqnarray}
   \nonumber
   \|u\|^p_{\bH^{\gamma}_p(t)}&\leq& C \int^t_0 (t-s)^{\theta-1}\|u\|^p_{\bH^{\gamma}_p(s)}ds +C\|f\|_{\bH^{\gamma}_p(T)}  \\
   && +C\|g\|_{\bH^{\gamma+c_0}_p(T,l_2)}+C\|h\|_{\bH^{\gamma+\bar{c}_0}_p(T,l_2,d_1)}.     \label{final}
   \end{eqnarray}
   We use \eqref{final} and Gronwall's inequality (see \cite {ye2007generalized}) to estimate $\|u\|^p_{\bH^{\gamma}_p(T)}$. Then, applying this estimate to  \eqref{eqn 5.21.7} and  using \eqref{eqn 5.21.4},  we  get a priori estimate  \eqref{eqn 10.22.14:25} if $u_0=v_0=0$.

   Step 2. We consider non-zero initial condition. Let $v\in \cH^{\gamma+2}_p(T)$ denote the solution to equation \eqref{eqn 07.18.3} taken from Theorem \ref{thm 07.24.1}.
    Then $\bar{u}:=u-v\in \cH^{\gamma+2}_p(T)$ satisfies   equation  \eqref{eqn 07.16.2} with $u_0=v_0=0$, $\tilde{f}, \tilde{g}$ and $\tilde{h}$, where
    $$
    \tilde{f}:= (a^{ij}-\delta^{ij})v_{x^ix^j}+b^iv_{x^i}+cv, 
     \quad \tilde{g}^k:=\mu^{ik}v_{x^i}+\nu^kv, \quad \tilde{h}^{rk}:=\bar{\mu}^{rk}v_{x^i}+\bar{\nu}^{rk}v.
    $$ 
    By the result of Step 1, 
    \begin{eqnarray}
    \nonumber
    \|u-v\|_{\cH^{\gamma+2}_p(T)} &\leq& C \|\tilde{f}\|_{\bH^{\gamma}_{p}(T)}+C\|\tilde{g}\|_{\bH^{\gamma+c_{0}}_{p}(T,l_{2})}+C
    \|\tilde{h}\|_{\bH^{\gamma+\bar{c}_{0}}_{p}(T,l_{2},d_{1})}\\
    &\leq& C \|v\|_{\bH^{\gamma+2}_p(T)}. \label{final ineq}
    \end{eqnarray}   
   For the second inequality above we used the calculations in Step 1 (see \eqref{eqn 5.21.4}). 
   Combining \eqref{final ineq} with the estimate for $v$, that is \eqref{eqn 07.19.1}, we finally get  a priori estimate \eqref{eqn 10.22.14:25} for $u$.    Hence,
   the theorem for the linear case is proved.

\textbf{2. Non-linear case}.
First, set
$$
\bar{f}=b^{i}u_{x^{i}}+cu+f(u), \quad \bar{g}^{k}=\mu^{ik}u_{x^{i}}+\nu^{k}u+g^{k}(u),\quad \bar{h}^{rk}=\bar{\mu}^{irk}u_{x^{i}}+\bar{\nu}^{rk}u+h^{rk}(u).
$$
Then by Assumption \ref{asm 07.16.1}, \eqref{eqn 10.08.13:18}, and Assumption \ref{asm 10.08.10:29} we have
\begin{equation*}
\begin{aligned}
&\|\bar{f}(u)-\bar{f}(v)\|_{H^{\gamma}_{p}}+\|\bar{g}(u)-\bar{g}(v)\|_{H^{\gamma+c_{0}}_{p}(l_{2})}+\|\bar{h}(u)-\bar{h}(v)\|_{H^{\gamma+\bar{c}_{0}}_{p}(l_{2},d_{1})}
\\
& \leq C \left( \|u-v\|_{H^{\gamma+1}_{p}}+  1_{c_{0}<1}  \|\mu^{i}(u-v)_{x^{i}}\|_{H^{\gamma+c_{0}}_{p}(l_{2})}+\|u-v\|_{H^{\gamma+c_{0}}_{p}(l_{2})}  \right)
\\
&\quad + C \left(1_{\bar{c}_{0}<1} \|\bar{\mu}^{i}(u-v)_{x^{i}}\|_{H^{\gamma+\bar{c}_{0}}_{p}(l_{2},d_{1})}+\|u-v\|_{H^{\gamma+\bar{c}_{0}}_{p}(l_{2},d_{1})} \right)
\\
& \quad +\|f(u)-f(v)\|_{H^{\gamma}_{p}}+\|g(u)-g(v)\|_{H^{\gamma+c_{0}}_{p}(l_{2})}+\|h(u)-h(v)\|_{H^{\gamma+\bar{c}_{0}}_{p}(l_{2},d_{1})}
\\
& \leq \varepsilon \|u-v\|_{H^{\gamma+2}_{p}}+C\|u-v\|_{H^{\gamma}_{p}},
\end{aligned}
\end{equation*}
where $u,v\in H^{\gamma+2}_{p}$ and the constant $C$ depends only on $\alpha,\beta_{1},\beta_{2},d,d_{1},\gamma,p,\delta,\kappa$ and $\varepsilon$. Hence by considering $\bar{f},\bar{g}^{k}$ and $\bar{h}^{rk}$ in place of $f,g^{k}$ and $h^{rk}$ respectively, we may assume that $b^{i}=c=\mu^{ik}=\nu^{k}=0$, and $\bar{\mu}^{irk}=\bar{\nu}^{rk}=0$.

By  the result for the linear case, for each $u\in \mathcal{H}^{\gamma+2}_{p}(T)$, one can define $v=\mathcal{R}u\in \mathcal{H}^{\gamma+2}_{p}(T)$  as the solution to  the equation
\begin{equation*}
\begin{aligned}
&\partial^{\alpha}_{t}v=a^{ij}v_{x^{i}x^{j}}+f(u)+\partial^{\beta_{1}}_{t}\int_{0}^{t}g^{k}(u)dW^{k}_{s}+\partial^{\beta_{2}}_{t}\int_{0}^{t}h^{rk}(u)dZ^{rk}_{s},\quad t>0
\\
&\quad v(0)=u_{0},\quad 1_{\alpha>1}\partial_{t}v(0)=1_{\alpha>1}v_{0},
\end{aligned}
\end{equation*}
and for this solution we have
\begin{equation*}\label{eqn 10.22.14:31}
\begin{aligned}
 \|v\|_{\mathcal{H}^{\gamma+2}_{p}(T)}  &\leq C \big(\|u_{0}\|_{U^{\gamma+2}_{p}}+1_{\alpha>1}\|v_{0}\|_{V^{\gamma+2}_{p}}+ \|f(u)\|_{\mathbb{H}^{\gamma}_{p}(T)}
\\
&\quad\quad\quad + \|g(u)\|_{\mathbb{H}^{\gamma+c_{0}}_{p}(T,l_{2})}+\|h(u)\|_{\mathbb{H}^{\gamma+\bar{c}_{0}}_{p}(T,l_{2},d_{1})}  \big).
\end{aligned}
\end{equation*}
By \eqref{eqn 07.26.3} for any $\varepsilon>0$, $t\leq T$, and $n=1,2,\dots$ we have
\begin{equation*}\label{eqn 10.22.13:14}
\begin{aligned}
  \|\mathcal{R}u-\mathcal{R}v\|^{p}_{\mathcal{H}^{\gamma+2}_{p}(t)}    &\leq C \big( \|f(u)-f(v)\|^{p}_{\mathbb{H}^{\gamma}_{p}(t)} + \|g(u)-g(v)\|^{p}_{\mathbb{H}^{\gamma+c_{0}}_{p}(t,l_{2})} 
\\
& \quad\quad\quad+\|h(u)-h(v)\|^{p}_{\mathbb{H}^{\gamma+\bar{c}_{0}}_{p}(t,l_{2},d_{1})}  \big)
\\
&\leq \varepsilon^{p}  \|u-v\|^{p}_{\mathbb{H}^{\gamma+2}_{p}(t)}+C_{0}\|u-v\|^{p}_{\mathbb{H}^{\gamma}_{p}(t)}
\\
&\leq   \varepsilon^{p}\|u-v\|^{p}_{\mathcal{H}^{\gamma+2}_{p}(t)} +C_{0}\int_{0}^{t}(t-s)^{\theta-1}\|u-v\|^{p}_{\mathcal{H}^{\gamma+2}_{p}(s)}ds,  
\end{aligned}
\end{equation*}
where the constant $C_{0}$ depends also on $\varepsilon$. Therefore, by using the identity
$$
\int_{0}^{t}(t-s_{1})^{\theta-1}\int_{0}^{s_{1}}(s_{1}-s_{2})^{\theta-1}\cdots \int_{0}^{s_{n-1}}(s_{n-1}-s_{n})^{\theta-1}ds_{n}\dots ds_{1}= \frac{\Gamma(\theta)^{n}}{\Gamma(n\theta+1)}t^{n\theta},
$$
and repeating above inequality, 
we get
\begin{equation*}\label{eqn 10.22.16:01}
\begin{aligned}
   \|\mathcal{R}^{n}u-\mathcal{R}^{n}v\|^{p}_{\mathcal{H}^{\gamma+2}_{p}(T)}   
&\leq \sum_{k=0}^{n} \binom{n}{k}\varepsilon^{(n-k)p}(T^{\theta}C_{0})^{k}\frac{\Gamma(\theta)^{k}}{\Gamma(k\theta+1)}    \|u-v\|^{p}_{\mathcal{H}^{\gamma+2}_{p}(T)}  
\\
&\leq 2^{n}\varepsilon^{np}\max_{k}\left(  \frac{(\varepsilon^{-1} T^{\theta}C_{0} \Gamma(\theta))^{k}}{\Gamma(k\theta+1)}  \right) \|u-v\|^{p}_{\mathcal{H}^{\gamma+2}_{p}(T)}.  \end{aligned}
\end{equation*}
Now fix $\varepsilon<1/8$, and note that  the above maximum is finite. 
This implies that if $n$ is large enough, then $\mathcal{R}^{n}$ is a contraction on $\mathcal{H}^{\gamma+2}_{p}(T)$. This proves the existence and uniqueness
results, and estimate \eqref{eqn 10.22.14:25} also follows.
The theorem is proved.

\mysection{Application to L\'evy space-time white noise}

The L\'evy space-time white noise considered in this article is a  generalization of classical space-time white noise in the sense described below. 

Let $H=L_2(\bR^d)$ be a Hilbert space with the inner product $<f,g>=\int_{\bR^d}fgdx$ and orthogonal basis $\{\eta_k:k=1,2,\cdots\}$. An $H$-valued process $\cZ(t)$ is called a cylindrical  Wiener process  if $<\cZ(t),\eta_k>$ $k=1,2,\cdots$ are independent one-dimentional   Wiener  processes. We get (at least formally) 
$$
\cZ(t)=\sum_k \eta_k Z^k_t, \quad \text{where}\,\,  Z^k_t:=<Z(t),\eta_k>.
$$
If $\cZ(t)$ is a cylindrical Wiener process, then 
we call  $\dot{\cZ}(t)$ a space-time white noise. It is known that 
\begin{equation}\label{eqn 01.03.17:24}
\int_{0}^{t} \int_{\bR^{d}} X(s,x) \dot{\cZ}(dxds) = \sum_{k=1}^{\infty} \int_{0}^{t} \int_{\bR^{d}} X(s,x) \eta^{k}(x) dZ^{k}_{s}, \quad \forall\, t \,(a.s)
\end{equation}
for any $X=\zeta(x)1_{(\tau,\sigma]}(t)$, where $\tau,\sigma$ are bounded stopping times and $\zeta\in \Ccinf(\bR^{d})$ (see \cite{Dalang,kry99analytic}). Here, we use Walsh's stochastic integral for space-time white noise.  

Following the definition and relations explained above, we consider a cylindrical L\'evy process $\cZ_t$ such that  $Z^k_t=<\cZ_t,\eta_k>$ are independent one-dimentional L\'evy processes, and we call $\dot{\cZ}_t$ a L\'evy space-time white noise. The integral against $\dot{\cZ}_t$ is defined as \eqref{eqn 01.03.17:24}. 

Actually, the decomposition of general cylindrical L\'evy process into one-dimensional L\'evy process $Z^{k}_{t}$ only guarantees that $Z^{k}_{t}$ are uncorrelated (see  e.g. \cite{cylindrical}).

In this section, we consider the SPDE
\begin{equation}\label{eqn 10.26.14:22}
\begin{aligned}
&\partial^{\alpha}_{t}u=a^{ij}u_{x^{i}x^{j}}+b^{i}u_{x^{i}}+cu+f(u)+\partial^{\beta_{2}}_{t}\int_{0}^{t}h(u)d\mathcal{Z}_{t},
\\
& \quad u(0,\cdot)=u_{0},\quad 1_{\alpha>1}\partial_{t}u(0,\cdot)=1_{\alpha>1}v_{0}
\end{aligned}
\end{equation}
where $a^{ij},b^{i},c$ are functions of $(\omega,t,x)$, and $f$ and $h$ depend on $(\omega,t,x)$ and the unknown $u$. Using the expansion of $\mathcal{Z}_{t}$, we can rewrite \eqref{eqn 10.26.14:22} as
\begin{equation*}
\begin{aligned}
&\partial^{\alpha}_{t}u=a^{ij}u_{x^{i}x^{j}}+b^{i}u_{x^{i}}+cu+f(u)+\partial^{\beta_{2}}_{t}\sum_{k=1}^{\infty}\int_{0}^{t}g^{k}(u)dZ^{k}_{t},
\\
& \quad u(0,\cdot)=u_{0},\quad 1_{\alpha>1}\partial_{t}u(0,\cdot)=1_{\alpha>1}v_{0}
\end{aligned}
\end{equation*}
where $g^{k}(\omega,t,x,u)=h(\omega,t,x,u)\eta^{k}(x)$.

The following result is from \cite[Lemma 7.1]{kim16timefractionalspde}. 
\begin{lemma}\label{lem 10.16.15:41}
Assume
$$
\kappa_{0}\in \left(\frac{d}{2},d \right],\quad 2\leq 2r\leq p, \quad 2r< \frac{d}{d-\kappa_{0}}.
$$
Also assume that $h(x,u)$ is a function of $(x,u)$, and there is a function $\xi=\xi(x)$ such that
$$
|h(x,u)-h(x,v)|\leq \xi(x)|u-v|.
$$
Then for $u,v\in L_{p}$, we have
$$
\|g(u)-g(v)\|_{H^{-\kappa_{0}}_{p}(l_{2})} \leq C \|\xi\|_{L_{2s}} \|u-v\|_{L_{p}},
$$
where $s=r/r-1$, and $C=C(r)<\infty$. In particular, if $r=1$, and $\xi\in L_{\infty}$, then
$$
\|g(u)-g(v)\|_{H^{-\kappa_{0}}_{p}(l_{2})} \leq C  \|u-v\|_{L_{p}}.
$$
\end{lemma}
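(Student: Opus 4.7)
Set $H(x):=h(x,u(x))-h(x,v(x))$, so that $g^{k}(u)-g^{k}(v)=H\eta^{k}$ and $|H(x)|\le \xi(x)|u(x)-v(x)|$. The key idea is to use the orthonormal basis $\{\eta^{k}\}$ only through Parseval's identity, keeping the regularizing operator $(1-\Delta)^{-\kappa_{0}/2}$ inside the sum over $k$; a naive pointwise bound fails because $\sum_{k}|\eta^{k}(x)|^{2}=\infty$.

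Let $K_{\kappa_{0}}$ denote the Bessel kernel, so that $(1-\Delta)^{-\kappa_{0}/2}f=K_{\kappa_{0}}\ast f$; recall that $K_{\kappa_{0}}$ is smooth away from the origin, decays exponentially at infinity, and behaves like $|x|^{\kappa_{0}-d}$ as $x\to 0$. For each $x\in\bR^{d}$,
$$(1-\Delta)^{-\kappa_{0}/2}(H\eta^{k})(x)=\langle K_{\kappa_{0}}(x-\cdot)H(\cdot),\,\eta^{k}\rangle_{L_{2}(\bR^{d})}.$$
Since $\kappa_{0}>d/2$ forces $K_{\kappa_{0}}\in L_{2}$, the function $y\mapsto K_{\kappa_{0}}(x-y)H(y)$ lies in $L_{2}(\bR^{d})$ for a.e.\ $x$ (working first with $u,v\in\Ccinf$ and extending by density). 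Parseval's identity then yields the pointwise identity
$$\sum_{k=1}^{\infty}\bigl|(1-\Delta)^{-\kappa_{0}/2}(H\eta^{k})(x)\bigr|^{2}=(|K_{\kappa_{0}}|^{2}\ast|H|^{2})(x),$$
so that $\|g(u)-g(v)\|_{H^{-\kappa_{0}}_{p}(l_{2})}^{p}=\int_{\bR^{d}}(|K_{\kappa_{0}}|^{2}\ast|H|^{2}(x))^{p/2}dx$. This is the only place where the basis structure enters.

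What remains is a standard convolution estimate. Young's inequality with exponents $a=r$ and $b$ given by $1/r+1/b=1+2/p$ (the assumptions $2\le 2r\le p$ guarantee $a,b\in[1,\infty]$), combined with the pointwise bound $|H|^{2}\le\xi^{2}|u-v|^{2}$ and H\"older's inequality (using $1/b=1/s+2/p$ with $s=r/(r-1)$), yields
$$\||K_{\kappa_{0}}|^{2}\ast|H|^{2}\|_{L_{p/2}}\le \|K_{\kappa_{0}}\|_{L_{2r}}^{2}\,\|\xi\|_{L_{2s}}^{2}\,\|u-v\|_{L_{p}}^{2}.$$
Taking the $p/2$-th root gives the claim. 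The special case $r=1$, $s=\infty$, $\xi\in L_{\infty}$ is the same argument, with H\"older replaced by a pointwise bound and Young reducing to $K_{\kappa_{0}}\in L_{2}$ (i.e.\ to $\kappa_{0}>d/2$).

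The only point to verify is $K_{\kappa_{0}}\in L_{2r}$: from the near-origin singularity $|x|^{\kappa_{0}-d}$ this requires $2r(\kappa_{0}-d)>-d$, equivalently $2r<d/(d-\kappa_{0})$, which is precisely the third standing hypothesis. There is no real obstacle beyond recognizing the Parseval identity; once it is written down, the remainder is exponent bookkeeping, and the critical exponent $d/2$ naturally appears as the threshold for $K_{\kappa_{0}}\in L_{2}$.
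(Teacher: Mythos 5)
Your proof is correct, and it is essentially the standard argument: the paper itself gives no proof of this lemma but cites \cite[Lemma 7.1]{kim16timefractionalspde}, whose proof (going back to Krylov's treatment of space-time white noise in \cite{kry99analytic}) is exactly your combination of the Bessel-kernel representation, Parseval's identity over the orthonormal basis $\{\eta^k\}$, and Young/H\"older exponent bookkeeping, with $2r<d/(d-\kappa_0)$ guaranteeing $K_{\kappa_0}\in L_{2r}$. One small remark: for the upper bound you only need Bessel's inequality, so completeness of $\{\eta^k\}$ and the density argument are not essential.
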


In this section, we assume that 
\begin{equation}\label{eqn 10.17.13:27}
\beta_{2} < \frac{3}{4}\alpha +\frac{1}{p},
\end{equation}
and the spatial dimension $d$ satisfies
\begin{equation}\label{eqn 10.17.13:27-2}
d<4-\frac{2(2\beta_{2}-2/p)^{+}}{\alpha}=:d_{0}.
\end{equation}
Note that $d_{0}\in(1,4]$, and if $\beta_{2}<\alpha/4+1/p$, then one can take $d=1,2,3$. Also if $\alpha=\beta_{2}=1$ (in this case $p<4$), then $d<4/p\leq 2$, and thus $d$ must be $1$.

\begin{assumption}\label{asm 10.16.15:18}
(i) The coefficients $a^{ij},b^{i}$, and $c$ are $\mathcal{P}\otimes\cB(\bR^{d})$-measurable.

(ii) The functions $f(t,x,u)$ and $h(t,x,u)$ are $\cP\otimes\cB(\R^{d+1})$-measurable.

(iii) For each $\omega,t,x,u$ and $v$,
$$
|f(t,x,u)-f(t,x,v)| \leq K |u-v|,\quad |h(t,x,u)-h(t,x,v)| \leq K \xi(x) |u-v|,
$$
where $\xi$ is a function of $(\omega,t,x)$.
\end{assumption}

\begin{theorem}\label{thm 10.27:15:35}
Let Assumptions \ref{asm 07.10.1} and \ref{asm 10.16.15:18} hold, and
$$
\|f(0)\|_{\mathbb{H}^{-\kappa_{0}-\bar{c}'_{0}}_{p}(T)}+\|h(0)\|_{\mathbb{L}_{p}(T)}+\sup_{\omega,t}\|\xi\|_{L_{2s}}\leq K < \infty,
$$
where $\kappa_{0}$ and $s$ satisfy
\begin{equation}\label{eqn 10.17.13:28}
\frac{d}{2}<\kappa_{0}<\left(2-\frac{(2\beta_{2}-2/p)^{+}}{\alpha}\right)\wedge d, \quad \frac{d}{2\kappa_{0}-d}<s.
\end{equation}
Also assume that the coefficients $a^{ij},b^{i}$ and $c$ satisfy Assumption \ref{asm 07.16.1} with  $\gamma=-\kappa_{0}-\bar{c}_{0}$, $u_{0}\in U^{-\kappa_{0}-\bar{c}_{0}+2}_{p}$, and $v_{0}\in V^{-\kappa_{0}-\bar{c}_{0}+2}_{p}$.    Then equation \eqref{eqn 10.26.14:22} has unique solution $u\in\mathcal{H}^{2-\kappa_{0}-\bar{c}_{0}}_{p}(T)$, and for this solution we have
\begin{equation}\label{eqn 11.09.19:58}
\begin{aligned}
\|u\|_{\mathcal{H}^{2-\kappa_{0}-\bar{c}_{0}}_{p}(T)} \leq C \big( &\|u_{0}\|_{U^{-\kappa_{0}-\bar{c}_{0}+2}_{p}}+1_{\alpha>1}\|v_{0}\|_{V^{-\kappa_{0}-\bar{c}_{0}+2}_{p}}
\\
& + \|f(0)\|_{\mathbb{H}^{-\kappa_{0}-\bar{c}_{0}}_{p}(T)}+\|h(0)\|_{\mathbb{L}_{p}(T)} \big).
\end{aligned}
\end{equation}
\end{theorem}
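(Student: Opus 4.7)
The strategy is to rewrite \eqref{eqn 10.26.14:22} in the form of equation \eqref{eqn 07.16.2} via the spectral decomposition $\mathcal{Z}_t=\sum_k\eta^k Z^k_t$, turning the space-time white noise integral into $\partial^{\beta_2}_t\sum_k\int_0^t g^k(u)\,dZ^k_s$ with $g^k(\omega,t,x,u):=h(\omega,t,x,u)\eta^k(x)$; in the notation of Theorem \ref{thm 10.08.10:57} this is the case $d_1=1$ with no Wiener stochastic term and $\mu^{ik}=\nu^k=\bar\mu^{irk}=\bar\nu^{rk}=0$. The new coefficient $g^k$ plays the role of $h^{rk}$, so only the $\bar c_0$-scale of regularity is relevant.

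\textbf{Choice of $\gamma$.} Lemma \ref{lem 10.16.15:41} produces estimates of $g(u)-g(v)$ in $H^{-\kappa_0}_p(l_2)$, so the natural choice is $\gamma:=-\kappa_0-\bar c_0$, whereby $\gamma+\bar c_0=-\kappa_0$ matches the output of the lemma, while $\gamma+2=2-\kappa_0-\bar c_0>0$ by \eqref{eqn 10.17.13:28}. The hypothesis \eqref{eqn 10.17.13:27} on $\beta_2$ is equivalent to $\bar c_0<3/2$, i.e.\ $2-\bar c_0>1/2$, which combined with the dimension bound \eqref{eqn 10.17.13:27-2} leaves the non-empty window $(\tfrac{d}{2},(2-\bar c_0)\wedge d)$ from which $\kappa_0$ is drawn. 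The second condition in \eqref{eqn 10.17.13:28} then lets one pick $r=s/(s-1)\in[1,p/2]$ with $2r<d/(d-\kappa_0)$, so that all the hypotheses of Lemma \ref{lem 10.16.15:41} are verified.

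\textbf{Checking the assumptions.} Assumption \ref{asm 07.16.1} is given for $a^{ij},b^i,c$ directly with the chosen $\gamma$, and part (vi) is automatic because $\bar\mu^{irk}=0$. For the nonlinear part: since $\gamma<0<\gamma+2$, the interpolation inequality \eqref{eqn 10.08.13:18} holds, i.e.\ for every $\varepsilon>0$,
\begin{equation*}
\|w\|_{L_p}\leq\varepsilon\|w\|_{H^{\gamma+2}_p}+C(\varepsilon)\|w\|_{H^{\gamma}_p}.
\end{equation*}
The Lipschitz assumption on $f$ together with the trivial embedding $L_p\hookrightarrow H^{\gamma}_p$ gives
\begin{equation*}
\|f(t,u)-f(t,v)\|_{H^{\gamma}_p}\leq C\|f(t,u)-f(t,v)\|_{L_p}\leq CK\|u-v\|_{L_p},
\end{equation*}
which upon interpolation yields the bound required by Assumption \ref{asm 10.08.10:29}. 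For the stochastic coefficient, Lemma \ref{lem 10.16.15:41} with the chosen $r,s$ and the uniform bound $\|\xi\|_{L_{2s}}\leq K$ gives
\begin{equation*}
\|g(t,u)-g(t,v)\|_{H^{\gamma+\bar c_0}_p(l_2)}=\|g(t,u)-g(t,v)\|_{H^{-\kappa_0}_p(l_2)}\leq CK\|u-v\|_{L_p},
\end{equation*}
and interpolation again converts this into the required $\varepsilon$-small form. Finally, the free-term norms $\|f(0)\|_{\mathbb{H}^{\gamma}_p(T)}$ and $\|g(0)\|_{\mathbb{H}^{\gamma+\bar c_0}_p(T,l_2)}$ in \eqref{eqn 10.22.14:25} are controlled by $\|f(0)\|_{\mathbb{H}^{-\kappa_0-\bar c_0}_p(T)}$ and $\|h(0)\|_{\mathbb{L}_p(T)}$ via the same embeddings and the multiplier estimate $\|h(0)\eta^k\|_{H^{-\kappa_0}_p(l_2)}\leq C\|h(0)\|_{L_p}$ that underpins Lemma \ref{lem 10.16.15:41}.

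\textbf{Conclusion and main obstacle.} Theorem \ref{thm 10.08.10:57} applied with $\gamma=-\kappa_0-\bar c_0$ and the data above yields a unique solution $u\in\mathcal{H}^{2-\kappa_0-\bar c_0}_p(T)$ together with the a priori bound \eqref{eqn 11.09.19:58}. The main non-trivial input throughout is the use of Lemma \ref{lem 10.16.15:41}, i.e.\ the Sobolev-multiplier estimate that lets the orthonormal system $\{\eta^k\}$ produce an $l_2$-valued coefficient belonging to $H^{-\kappa_0}_p(l_2)$ whenever $\kappa_0>d/2$; it is exactly this $\kappa_0>d/2$ requirement, intersected with the ceiling $\kappa_0<2-\bar c_0$ needed to keep $\gamma+2>0$, that forces both the dimension restriction \eqref{eqn 10.17.13:27-2} and the bound \eqref{eqn 10.17.13:27} on $\beta_2$.
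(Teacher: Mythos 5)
Your proposal is correct and follows essentially the same route as the paper: reduce \eqref{eqn 10.26.14:22} to Theorem \ref{thm 10.08.10:57} with $\gamma=-\kappa_{0}-\bar c_{0}$, verify Assumption \ref{asm 10.08.10:29} for $g^{k}=h\eta^{k}$ via Lemma \ref{lem 10.16.15:41} and the interpolation inequality \eqref{eqn 10.08.13:18}, and replace $\|g(0)\|_{\mathbb{H}^{-\kappa_{0}}_{p}(T,l_{2})}$ by $C\|h(0)\|_{\mathbb{L}_{p}(T)}$ using the multiplier estimate behind that lemma. No gaps.
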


\begin{proof}
It suffices to check the conditions for Theorem \ref{thm 10.08.10:57} holds for $\gamma=-\kappa_{0}-\bar{c}_{0}$. Since $f(u)$ is Lipschitz continuous, we only need to check the conditions for $g^{k}(u)=\eta^{k}h(u)$. Let $r=s/(s-1)$. Then $2r<d/(d-\kappa_{0})$ due to the assumption on $s$. Since $\gamma+\bar{c}_{0}=-\kappa_{0}$, by Lemma \ref{lem 10.16.15:41} for any $\varepsilon>0$, we have
$$
\|g(u)-g(v)\|_{H^{\gamma+\bar{c}_{0}}_{p}(l_{2})} \leq C\|\xi\|_{L_{2s}}\|u-v\|_{L_{p}} \leq \varepsilon\|u-v\|_{H^{\gamma+2}_{p}}+C(\varepsilon)\|u-v\|_{H^{\gamma}_{p}},
$$
where the second inequality holds due to the assumption on $\kappa_{0}$. 
Therefore, the condition for $g^{k}$ is also fulfilled. Hence, by Theorem \ref{thm 10.08.10:57} we prove the claims of the theorem with estimate \eqref{eqn 10.17.13:28} replaced by
\begin{equation*}
\begin{aligned}
\|u\|_{\mathcal{H}^{2-\kappa_{0}-\bar{c}_{0}}_{p}(T)} \leq C \big( &\|u_{0}\|_{U^{-\kappa_{0}-\bar{c}_{0}+2}_{p}}+1_{\alpha>1}\|v_{0}\|_{V^{-\kappa_{0}-\bar{c}_{0}+2}_{p}}
\\
& + \|f(0)\|_{\mathbb{H}^{-\kappa_{0}-\bar{c}_{0}}_{p}(T)}+\|g(0)\|_{\mathbb{H}^{-\kappa_{0}}_{p}(T,l_{2})} \big).
\end{aligned}
\end{equation*}
Furthermore, by inspecting the proof of Lemma \ref{lem 10.16.15:41}, one can easily check 
$$\|g(0)\|_{\mathbb{H}^{-\kappa_{0}}_{p}(T,l_{2})}\leq C \|h(0)\|_{\mathbb{L}_{p}(T)}.
$$
 Hence, we have \eqref{eqn 10.17.13:28}, and the theorem is proved.
\end{proof}

\begin{remark}
(i) By \eqref{eqn 10.17.13:27-2} one can always choose $\kappa_{0}$ satisfying \eqref{eqn 10.17.13:28}.

(ii)  Note that the  constant $2-\kappa_{0}-\bar{c}'_{0}$ represents the regularity (or differentiability) of the solution with respect to the space variables. By using the definition of $\bar{c}'_{0}$ we have
\begin{equation*}
\begin{aligned}
0<2-\kappa_{0}-\bar{c}_{0}<
\begin{cases}
2-\frac{d}{2}-\frac{2\beta_{2}-2/p}{\alpha} & \beta_{2}>1/p
\\
2-\frac{d}{2} & \beta_{2}\leq 1/p.
\end{cases}
\end{aligned}
\end{equation*}
If $\xi$ is bounded, then one can choose $r=1$. Thus by taking $\kappa_{0}$ sufficiently close to $d/2$, one can make $2-\kappa_{0}-\bar{c}'_{0}$ as close to the above upper bounds as one wishes.
\end{remark}

\begin{remark}
As mentioned in Remark \ref{remark men},
Assumption \ref{asm 07.10.1} (ii) can be dropped in Theorem \ref{thm 10.27:15:35}.
\end{remark}

\vspace{3mm}

\textbf{Acknowledgement}

The authors are sincerely grateful to the anonymous referee for careful reading and many valuable comments.


\end{document}